\documentclass{amsart}

\usepackage{enumitem}

\newcommand{\A}{\mathcal{A}}
\newcommand{\N}{\mathbb{N}}
\newcommand{\R}{\mathbb{R}}
\newcommand{\C}{\mathbb{C}}
\newcommand{\F}{\mathcal{F}}
\newcommand{\M}{\mathcal{M}}
\newcommand{\Proj}{\mathcal{P}}
\newcommand{\Cx}{\C \langle x_1,\ldots,x_d\rangle}

\newcommand{\dom}{\mathrm{dom}}
\newcommand{\im}{\mathrm{im}}
\newcommand{\sa}{\mathrm{sa}}
\newcommand{\1}{\mathbf{1}}
\newcommand{\norm}[1]{\left\Vert #1\right\Vert}
\DeclareMathOperator{\tr}{Tr}
\DeclareMathOperator{\rk}{rk}

\makeatletter
\def\moverlay{\mathpalette\mov@rlay}
\def\mov@rlay#1#2{\leavevmode\vtop{%
\baselineskip\z@skip \lineskiplimit-\maxdimen
\ialign{\hfil$#1##$\hfil\cr#2\crcr}}}
\makeatother

\def\lff{\moverlay{(\cr<}}
\def\rff{\moverlay{)\cr>}}

\newcommand{\FF}{{\C \lff x_1,\ldots ,x_d \rff}}

\theoremstyle{definition}
\newtheorem{dfn}{Definition}
\theoremstyle{plain}
\newtheorem{thm}[dfn]{Theorem}

\newtheorem{prop}[dfn]{Proposition}
\newtheorem{cor}[dfn]{Corollary}
\newtheorem{lm}[dfn]{Lemma}
\theoremstyle{remark}
\newtheorem{exam}[dfn]{Example}
\newtheorem{rem}[dfn]{Remark}
\title[Convergence of nc rational functions evaluated in random matrices]{Convergence for noncommutative rational functions evaluated in random matrices
}

\author{Beno\^\i{}t Collins}
\address{Department of Mathematics, Kyoto University, Kitashirakawa Oiwake-cho, Sakyo-ku, 606-8502, Japan}
\email{collins@math.kyoto-u.ac.jp}

\author{Tobias Mai}
\address{Saarland University, Department of Mathematics, D-66123 Saarbr\"ucken, Germany}
\email{mai@math.uni-sb.de}

\author{Akihiro Miyagawa}
\address{Department of Mathematics, Kyoto University, Kitashirakawa Oiwake-cho, Sakyo-ku, 606-8502, Japan}
\email{miyagawa.akihiro.43v@st.kyoto-u.ac.jp}

\author{F\'elix Parraud} 
\address{Universit\'e de Lyon, ENSL, UMPA, 46 all\'ee d'Italie, 69007 Lyon. \\ Department of Mathematics, Kyoto University, Kitashirakawa Oiwake-cho, Sakyo-ku, 606-8502, Japan}
\email{felix.parraud@ens-lyon.fr}

\author{Sheng Yin $^{1,2}$}
\address{$^{1}$IMT, Universit\'e de Toulouse, UPS, F-31400 Toulouse, France}
\address{$^{2}$Department of Mathematics, Baylor University, Waco, TX 76706, USA}
\email{sheng\_yin@baylor.edu}

\begin{document}
\maketitle
\begin{abstract}
    One of the main applications of free probability is to show that for appropriately chosen independent copies of $d$ random matrix models, any noncommutative polynomial in these $d$ variables has a spectral distribution that converges asymptotically and can be described with the help of free probability. 
    This paper aims to show that this can be extended to noncommutative rational functions, answering an open question by Roland Speicher.
    
    This paper also provides a noncommutative probability approach to approximating the free field. At the algebraic level, its construction relies on the approximation by generic matrices. On the other hand, it admits many embeddings in the algebra of operators affiliated with a $II_1$ factor. A consequence of our result is that, as soon as the generators admit a random matrix model, the approximation of any self-adjoint noncommutative rational function by generic matrices can be upgraded at the level of convergence in distribution.  
\end{abstract}

\section{Introduction}

Following the earlier work of Ching \cite{ching73}, Avitzour \cite{Avitzour82} and Voiculescu \cite{Voi85} introduced a reduced free product $C^*$-algebra. In particular, Voiculescu noticed that the underlying concept of freeness can be interpreted as some highly noncommutative analog of the notion of independence in classical probability theory; this insight motivated him to develop what became known as free probability. His early investigations lead him, in particular, to the free central limit theorem in the spirit of other noncommutative central limit theorems such as \cite{FQ83}, \cite{HWP80}. Generally, non-trivial free products of groups yield ICC class groups, and their von Neumann algebra is a factor. Therefore they have only one finite trace, and it was natural to study free product factors from the point of view of noncommutative probability spaces -- a pair consisting of an algebra and a trace. This point of view has been spectacularly successful, and arguably one of its most significant -- and initially unexpected -- achievements was to describe the limiting spectrum of noncommutative polynomials in i.i.d. random matrices. This phenomenon is known as asymptotic freeness.  
It was initially described in \cite{MR1094052,MR1601878}.
It has been subsequently enhanced in many directions, and a notable direction of improvement was the study of the norm of a random matrix (strong asymptotic freeness). A beautiful breakthrough was made in  2005 by  Haagerup, and Thorbj\o rnsen in \cite{HT}, where they proved the almost sure convergence of the norm of a polynomial evaluated in independent GUE matrices. This method was then refined in multiple directions; the case of Gaussian orthogonal and symplectic matrices was tackled in \cite{SCH}, that of Wigner matrices in \cite{CD,AND}, and that of Wigner and deterministic matrices in \cite{male,belin-capi}. Besides that, Collins and Male proved in \cite{MR3205602} the same result with unitary Haar matrices instead of GUE matrices by mapping this problem with  Male's previous results. Those result has allowed turning an area of pure mathematics into a very useful tool for applied mathematics that relies heavily on random matrix models. 

The models whose limiting behavior is well-understood thanks to asymptotic freeness involve the arithmetic operations of multiplication, addition, and scalar multiplication. Other useful operations have also been successfully studied through free probability, such as taking matrix values or Hadamard products (asymptotic freeness with amalgamation).
We refer to \cite{MR3585560} for an exposition of many 
classical and recent results in this direction.
Recently, it has also been possible to involve systematically
the composition with smooth functions, even at the level of 
strong convergence, cf. \cite{p1,p2}.

However, one arithmetic operation that remains largely unexplored in the context of random matrix models is the inverse. The purpose of this paper is to address this question.
A recent result in \cite{Yin18} shows that taking the inverse is a stable operation if the strong convergence replaces the convergence in distribution for matrices.
More precisely, for a sequence of matrices that strongly converges in distribution towards a limiting object -- an operator which we also call a (noncommutative) random variable -- the sequence of their inverses will eventually be well-defined and strongly converges in distribution to the inverse of the limiting random variable, provided that this random variable has a bounded inverse.
Furthermore, such a result can be extended from an inverse to noncommutative rational functions in multiple variables (a counterpart of commutative rational functions that has been developed by many pioneers, see, for examples \cite{Ami66,Coh06}) by a recursive structure of rational functions (and their representing rational expressions) or by a linearization trick for rational functions.

However, to go beyond the case of bounded evaluations, a problem that one faces about the inverse is that using it might fail to result in a well-defined model when the inverse is performed on non-invertible matrices. 
On the other hand, the limiting object has recently been at the center of the attention of free probabilists, and many breakthroughs have been obtained, see among others \cite{MSY19}.
Incidentally, the limiting theory relies on the theory of noncommutative rational functions and the embedding question of the rational functions in the generating operators into the algebra of unbounded operators affiliated with the underlying von Neumann algebra.
This embedding question was affirmatively answered long ago in \cite{Linnell1993}, whose goal was to answer the Atiyah conjecture for some families of groups, including the free groups.
It was recently noted that this result also answered the well-definedness question for rational functions in freely independent Haar unitary random variables in the context of free probability.
Moreover, in \cite{MSY19}, the well-definedness was further proved for a large family of random variables beyond the free Haar unitaries.

Let us also mention that there are many natural random matrix models involving the inverse operation and that this is an important topic nowadays, see e.g.
\cite{EKN20,MY21}.
One goal of our manuscript is to provide a unified approach to the study of the limiting
spectral distribution under such generality.
Therefore, the natural questions are:
\begin{itemize}
    \item can we make sense of random matrix models involving inverses?
    \item do they converge towards their natural limiting candidates, whose properties have been unveiled recently?
\end{itemize}

Speicher asked these questions during a meeting at MFO in 2019, \cite{MFO2019}.
Partial answers have been given under some assumptions, such as bounded evaluation and specific random matrix models, cf. \cite{Yin18, EKN20, Yin20}.
Let us mention that \cite[Theorem 5.2]{HS07} considers, for freely independent semicirculars $(x_1,x_2,x_3,x_4)$, the operator $(x_1+ix_2)(x_3+ix_4)^{-1}$, whose radial part fits in the context of our study.
While our results do not tell anything about the convergence of the spectral measure to the Brown measure, we can say that the radial part of $(X^N_1 + i X_2^N)(X^N_3 + i X_4^N)^{-1}$, for independent GUE matrices $(X_1^N,X_2^N,X_3^N,X_4^N)$,
converges to the radial part of $(x_1+ix_2)(x_3+ix_4)^{-1}$, which sheds some additional light on this operator. 
The purpose of this paper is to settle these questions in a very general setup.
Our main results can be stated as follows.

\begin{thm}
\label{thm:main}
	Let $X^N=(X_1^N,\dots,X_{d_1}^N)$ be a $d_1$-tuple of self-adjoint random matrices and let $U^N=(U_1^N,\dots,U_{d_2}^N)$ be a $d_2$-tuple of unitary random matrices. Further, let $R$ be a non-degenerate square matrix-valued noncommutative rational expression in $d=d_1+d_2$ variables which is self-adjoint of type $(d_1,d_2)$; see Definition \ref{def:rational_expression_selfadjoint}. Suppose that the following conditions are satisfied:
	\begin{enumerate}
	    \item\label{thm:main-cond1} $(X^N,U^N)$ converges almost surely in $\ast$-distribution towards a $d$-tuple of noncommutative random variables $(x,u)$ in some tracial $W^\ast$-probability space $(\M,\tau)$ satisfying the regularity condition $\Delta(x,u)=d$; see Sections \ref{subsec:unbounded} and \ref{subsec:Delta}.
	    \item\label{thm:main-cond2} For $N$ large enough $R(X^N,U^N)$ is well-defined almost surely.
	\end{enumerate}
	Then $R(x,u)$ is well-defined, and the empirical measure of $R(X^N,U^N)$ converges almost surely in law towards the analytic distribution of $R(x,u)$.

The assumption \ref{thm:main-cond2} is satisfied for random matrix models $(X^N,U^N)$ whose law on $M_N(\C)_\sa^{d_1} \times U_N(\C)^{d_2}$ is absolutely continuous with respect to the product measure of the Lebesgue measure on $M_N(\C)_\sa$ and the Haar measure on $U_N(\C)$.

In particular, the assumptions \ref{thm:main-cond1} and \ref{thm:main-cond2} are satisfied for random matrix models satisfying the following conditions:
\begin{itemize}
    \item $(X^N,U^N)$ are almost surely asymptotically free.
    \item The law of each $X_j^N$ has a density with respect to the Lebesgue measure on $M_N(\C)_\sa$ and its eigenvalue distribution almost surely converges weakly to some compactly supported probability measure on $\R$ that is non-atomic.
    \item $U^N$ are i.i.d. Haar distributed.
\end{itemize}
\end{thm}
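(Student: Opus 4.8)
We sketch the argument; the plan is to encode $R$ by a single affine self-adjoint linear pencil via the linearization trick, to read off well-definedness and the analytic distribution of $R(x,u)$ from that pencil and the regularity hypothesis $\Delta(x,u)=d$, and then to establish the convergence of the empirical measure at the level of the pencil's resolvent. \emph{Linearization.} Since $R$ is a non-degenerate square matrix-valued rational expression, self-adjoint of type $(d_1,d_2)$, the realization theory of noncommutative rational functions (Cohn; Helton--Mai--Speicher; as used in \cite{Yin18,MSY19}) produces an integer $n$ and a self-adjoint affine pencil
\[
\mathbf{L}(\zeta,x,u)=\zeta\,\Lambda+\ell_0+\sum_{j=1}^{d_1}\ell_j\otimes x_j+\sum_{k=1}^{d_2}\bigl(\ell'_k\otimes u_k+(\ell'_k)^{\ast}\otimes u_k^{\ast}\bigr),
\]
with $\ell_0=\ell_0^{\ast}\in M_n(\C)$ and $\Lambda$ the orthogonal projection onto a distinguished $m$-dimensional corner ($m$ being the size of $R$), such that, for a self-adjoint tuple $a$ and a unitary tuple $v$ in a tracial $W^{\ast}$-probability space or a matrix algebra: $R(a,v)$ is well-defined in the affiliated operators (resp.\ as a matrix) if and only if the $\zeta$-free sub-pencil $Q(a,v)$ obtained by deleting the distinguished rows and columns of $\mathbf{L}$ is invertible; and when this holds, $\mathbf{L}(z,a,v)$ is invertible for all $z\in\C^{+}$ with distinguished corner equal to $(z-R(a,v))^{-1}$ up to sign. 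Consequently the Cauchy transform of the analytic distribution of $R(a,v)$ is the corresponding normalized corner trace of $\mathbf{L}(z,a,v)^{-1}$, which by Jacobi's formula equals, up to a constant, $\tfrac{d}{dz}\log\det\mathbf{L}(z,a,v)$ (with the Fuglede--Kadison determinant in the $W^{\ast}$ case).

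\emph{Well-definedness and reduction.} Non-degeneracy of $R$ says precisely that $Q$ represents an invertible matrix over the free field $\FF$, and the hypothesis $\Delta(x,u)=d$ is exactly what makes the evaluation homomorphism from $\FF$ into the operators affiliated with $\M$ well-defined and faithful (see \cite{MSY19} and Section~\ref{subsec:Delta}); hence $Q(x,u)$ is invertible as an affiliated operator, $R(x,u)$ is well-defined, $\mathbf{L}(z,x,u)$ is invertible for $z\in\C^{+}$, and $\mu_{R(x,u)}$ is a genuine probability measure on $\R$. On the matrix side, assumption~\ref{thm:main-cond2} gives, almost surely for $N$ large, that $Q(X^N,U^N)$ and hence $\mathbf{L}(z,X^N,U^N)$ ($z\in\C^{+}$) are invertible. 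Since $\mu_{R(X^N,U^N)}$ and $\mu_{R(x,u)}$ are probability measures, the continuity theorem for Cauchy transforms (which also supplies tightness), applied along a countable dense subset of $\C^{+}$ and extended by normality of the corresponding family of Cauchy transforms, reduces everything to proving that, for each fixed $z\in\C^{+}$,
\[
\bigl(\mathrm{tr}_m\otimes\mathrm{tr}_N\bigr)\bigl[\mathbf{L}(z,X^N,U^N)^{-1}\bigr]_{\mathrm{corner}}\ \xrightarrow[N\to\infty]{}\ \bigl(\mathrm{tr}_m\otimes\tau\bigr)\bigl[\mathbf{L}(z,x,u)^{-1}\bigr]_{\mathrm{corner}}\qquad\text{almost surely.}
\]

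\emph{The core convergence, and the main obstacle.} Fix $z\in\C^{+}$. The difficulty is that $\mathbf{L}(z,X^N,U^N)$ is not normal and its inverse need not be uniformly bounded in $N$ --- because $R(x,u)$ may itself be an unbounded operator --- so the resolvent is not accessible by a naive polynomial approximation. It is convenient to track the logarithmic potential of the pencil: writing $\nu_z^{(N)}$ and $\nu_z$ for the symmetrized singular value distributions of $\mathbf{L}(z,X^N,U^N)$ and $\mathbf{L}(z,x,u)$, assumption~\ref{thm:main-cond1} (being moment convergence for $\ast$-polynomials, and $\mathbf{L}\mathbf{L}^{\ast}$ being such) gives $\nu_z^{(N)}\to\nu_z$ weakly almost surely, while the previous step shows $\nu_z$ has no atom at $0$ and (again by $\Delta(x,u)=d$) that $\int\log|t|\,d\nu_z(t)>-\infty$. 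What remains is to show that no logarithmic mass escapes, i.e.\ $\int\log|t|\,d\nu_z^{(N)}(t)\to\int\log|t|\,d\nu_z(t)$, $\log|\cdot|$ being unbounded near both $0$ and $\infty$. Near $\infty$ this is controlled by the convergence of the eigenvalue distributions of the $X_j^N$ to compactly supported limits, so that only a vanishing fraction of the singular values of $\mathbf{L}(z,X^N,U^N)$ is macroscopically large; near $0$, assumption~\ref{thm:main-cond2} (no singular value exactly at $0$) together with the absence of an atom of $\nu_z$ at $0$ prevents a positive fraction of singular values from collapsing to $0$, and a uniform integrability argument then closes the gap; differentiating the resulting convergence of (the real parts of) the holomorphic functions $z\mapsto\tfrac1{nN}\log\det\mathbf{L}(z,X^N,U^N)$ in $z$ gives the displayed limit. \textbf{This last point --- upgrading the qualitative statements ``$R(X^N,U^N)$ is a.s.\ well-defined'' and ``$\mu_{Q(x,u)}$ has no atom at $0$'' to the quantitative control on the small singular values of the pencil --- is the main obstacle}, and is where the bulk of the technical work lies.

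\emph{The explicit model classes.} If the law of $(X^N,U^N)$ on $M_N(\C)_\sa^{d_1}\times U_N(\C)^{d_2}$ is absolutely continuous with respect to the product of Lebesgue measure and Haar measure, then \ref{thm:main-cond2} holds: by the linearization, $R(X^N,U^N)$ is well-defined if and only if $\det Q(X^N,U^N)\neq0$, and $(A,V)\mapsto\det Q(A,V)$ is the restriction to that manifold of a polynomial in the entries of the $A_j$ and in the entries and conjugates of the $V_k$; non-degeneracy of $R$, together with the fact that $\FF$ is faithfully realized by such matrix models (cf.\ \cite{Linnell1993,MSY19}), forces this function not to be identically zero for $N$ large, so its zero set is null for the product measure and an absolutely continuous law avoids it almost surely. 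Finally, for the concrete models in the last part of the statement: almost sure asymptotic freeness, the almost sure weak convergence of the eigenvalue distribution of each $X_j^N$ to a compactly supported non-atomic measure, and the classical convergence of i.i.d.\ Haar unitary matrices to free Haar unitaries together show that $(X^N,U^N)$ converges almost surely in $\ast$-distribution to a free tuple $(x,u)$ in which each $x_j$ has non-atomic distribution and the $u_k$ are free Haar unitaries; since $\Delta$ is additive over free families and equals $1$ both for a single self-adjoint element with non-atomic distribution and for a Haar unitary, $\Delta(x,u)=d_1+d_2=d$, which is \ref{thm:main-cond1}; and since the densities of the $X_j^N$ and the i.i.d.\ Haar law of $U^N$ make the joint law of $(X^N,U^N)$ absolutely continuous with respect to the above product measure, \ref{thm:main-cond2} follows from the preceding observation.
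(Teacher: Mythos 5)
Your route to the convergence statement (linearize with a spectral parameter $z$, pass to the corner of the resolvent of the pencil $\mathbf{L}(z,\cdot)$, and control it through $\log\det$ and the symmetrized singular value distributions $\nu_z^{(N)}$) breaks down exactly at the step you yourself flag as ``the main obstacle'', and that step is a genuine gap rather than a technicality: weak convergence $\nu_z^{(N)}\to\nu_z$, almost sure invertibility of $\mathbf{L}(z,X^N,U^N)$, and absence of an atom of $\nu_z$ at $0$ do \emph{not} imply $\int\log|t|\,d\nu_z^{(N)}(t)\to\int\log|t|\,d\nu_z(t)$. A vanishing proportion of singular values of size $e^{-cN^2}$, which none of the hypotheses (i)--(ii) excludes, destroys the normalized log-determinant while leaving both the weak limit and the invertibility statement intact; this is precisely the hard small-singular-value problem familiar from circular-law type arguments, and it is not resolved by ``a uniform integrability argument''. (Moreover, your claim that $\Delta(x,u)=d$ gives $\int\log|t|\,d\nu_z>-\infty$ is unsupported: invertibility of $\mathbf{L}(z,x,u)$ in $\widetilde{\M}$ only yields $\nu_z(\{0\})=0$, not a nonzero Fuglede--Kadison determinant.) The paper's proof is arranged so that no such quantitative control is ever needed: it takes a self-adjoint formal linear representation $R=w^\ast Q^{-1}w$ with $Q_N=Q(X^N,X^{N\ast})$ self-adjoint (Theorem \ref{sa-lin}, no spectral parameter) and compares cumulative distribution functions directly. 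By Lemma \ref{rang}, replacing $Q_N^{-1}$ by $f_\varepsilon(Q_N)$, with $f_\varepsilon$ bounded continuous and equal to $t\mapsto t^{-1}$ off $[-\varepsilon,\varepsilon]$, shifts $\F_{w^\ast Q_N^{-1}w}$ uniformly by at most the rank of the difference, hence (Lemma \ref{decr}) by at most the proportion of spectrum of $Q_N$ in $[-\varepsilon,\varepsilon]$; this proportion is bounded by $\tr_k\otimes\tau^{(N)}(h_\varepsilon(Q_N))$, which converges (via the $\ast$-distribution hypothesis, with some care since the norms of $X^N$ need not be bounded) to $\tr_k\otimes\tau(h_\varepsilon(Q_\infty))$, and this tends to $0$ as $\varepsilon\to 0$ because $Q_\infty$ is invertible in $\widetilde{\M}$. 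Thus arbitrarily small nonzero eigenvalues of $Q_N$ can only move the distribution function by their asymptotically negligible proportion, which is exactly how the paper bypasses the obstacle on which your outline founders; see Proposition \ref{prop:convergence}.

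There are also two unjustified shortcuts in your treatment of well-definedness. First, ``$R(X^N,U^N)$ is well-defined if and only if $\det Q(X^N,U^N)\neq 0$'' is not what linearization provides: a formal linear representation only gives the inclusion $\dom_\A(R)\subseteq\dom_\A(Q^{-1})$, while the domain of a rational \emph{expression} requires every nested inverse to exist. The paper therefore argues by induction over the construction of $R$, both for random matrices (proof of Theorem \ref{thm:evaluation_rand_mat_abscont}) and for the limit operators (Theorem \ref{thm:evaluation_maximalDelta}, which is needed because \cite{MSY19} handles rational functions, not arbitrary non-degenerate expressions), invoking at each inversion step the fullness of the augmented pencil $\tilde Q=\begin{pmatrix}0 & v\\ w & Q\end{pmatrix}$ and the Schur complement. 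Second, the assertion that non-degeneracy forces $\det Q$ not to vanish identically on $M_N(\C)_\sa^{d_1}\times U_N(\C)^{d_2}$ for large $N$ is the actual content of Propositions \ref{prop:identity_principle} and \ref{prop:full_sa_evaluation}: a nonzero polynomial on $M_N(\C)^{d}$ can a priori vanish identically on this real submanifold (over $\R$ this genuinely happens, cf.\ the remark following Theorem \ref{thm:evaluation_rand_mat_abscont}), and one needs the complex-analytic identity-principle argument through $X+zY$ and the polar decomposition of the unitary slots to rule this out. Your handling of the final explicit model (additivity of $\Delta$ over free families, non-atomic limits, i.i.d.\ Haar unitaries, absolute continuity of the joint law) does match Corollary \ref{cor:maximal_Delta} and is fine.
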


The main part of Theorem \ref{thm:main} will be proved in Theorem \ref{thm:evaluation_maximalDelta} and Theorem \ref{conver}. Theorem \ref{thm:evaluation_maximalDelta} ensures that $r(x,u)$ is well-defined as long as $\Delta(x,u)=d$, whereas Theorem \ref{conver} proves that the convergence in $\ast$-distribution implies the convergence of the empirical measure. Note that we prove this theorem for any sequence of deterministic matrices which satisfies assumptions \ref{thm:main-cond1} and \ref{thm:main-cond2}. Thus this theorem can be used outside of the field of random matrices. Finally in Theorem \ref{thm:evaluation_rand_mat_abscont}, we prove that assumption \ref{thm:main-cond2} is satisfied for ``absolutely continuous'' random matrix models $(X^N,U^N)$. In combination with this, Corollary \ref{cor:maximal_Delta} ensures that the particular random matrix model $(X^N,U^N)$ satisfies the assumptions \ref{thm:main-cond1} and \ref{thm:main-cond2} of Theorem \ref{thm:main}. 

An interpretation of our results is as follows: the free field
$\C\lff x_1,\dots,x_d\rff$ together with a $*$-structure can be endowed
with a noncommutative probability structure
through its embedding in the $*$-algebra of operators affiliated to a $II_1$ factor.
Let us elaborate on this noncommutative probability structure.
To each selfadjoint rational function $R$ of the free
field one can associate a probability measure on $\R$ (i.e. an element of $\mathcal{P}(\mathbb{R})$),
which may have unbounded support. 
One sees that this map $\C\lff x_1,\dots,x_d\rff_{\sa}\to \mathcal{P}(\mathbb{R})$
 strictly generalizes the tracial map
on the $*$-algebra generated by the generators because, in general, elements of the free field do not have moments. 
In addition, it allows one to define a noncommutative probability structure directly on a $*$-free field without necessarily resorting to
von Neumann algebras and affiliated operators.  In this context, our result says that
any matrix approximation of the
free noncommutative tracial $*$-algebra generated by the generators of the free field
in the sense of noncommutative distribution convergence, can be upgraded into 
pointwise convergence of the map 
$\C\lff x_1,\dots,x_d\rff_{\sa}\to \mathcal{P}(\mathbb{R})$.
Consequently, this interpretation may allow us to read the information on the algebraic side (the free field) out of information on the probabilistic side (probability measures).
For example, the inner rank of a self-adjoint matrix $A$ over $\C\lff x_1,\dots,x_d\rff$ can be seen from the asymptotic proportion of zero eigenvalues in the spectrum of the evaluation of $A$ at any approximation matrices that model the free field.

For a $d$-tuple $X$ satisfying $\Delta(X)=d$, we know from \cite{MSY19} that the division closure $D(X)$ of $\C\langle X\rangle$ in the $\ast$-algebra $\widetilde{W^\ast(X)}$ of all closed and densely defined operators that are affiliated with $W^\ast(X)$ provides a model of the free field $\C\lff x\rff$. It would be interesting to find a criterion similar to \cite{DR97,Linnell2000} which allows us to decide whether an element in $\widetilde{W^\ast(X)}$ belongs to the division closure $D(X)$. If $d\geq 2$ and $X$ are free Haar unitaries, then $W^\ast(X)$ is isomorphic to the free group factor $L(\mathbb{F}_d)$; in this case, such a criterion was provided by Linnell in \cite{Linnell2000}, building on the paper \cite{DR97} by Duchamp and Reutenauer in which they proved a conjecture of Connes \cite{C94}.

This paper is organized as follows: following this introductory
section, section \ref{sec:preliminaries} gathers necessary
facts about noncommutative rational functions and expressions; section \ref{sec:evaluations} shows that the random matrix model is well-defined for a dimension large enough, and section \ref{sec:main} evaluates the limiting distribution.

Acknowledgments. The problem considered in this paper appeared in the context of 
the MSc studies of A. Miyagawa, under the supervision of B. Collins (cf \cite{Miyagawa-MSc}), and related questions were discussed during the visit of T. Mai in Kyoto in 2019. T. Mai is grateful for the great hospitality of B. Collins and the entire Department of Mathematics at Kyoto University. F. Parraud, T. Mai, and S. Yin benefited from the hospitality of MFO, during which R. Speicher stated the conjecture leading to this paper.
The authors thank G. C\'ebron, A. Connes, M. de la Salle, and R. Speicher for valuable discussions. We thank an anonymous referee for a careful reading of our manuscript and for valuable comments and suggestions.

BC was supported by JSPS KAKENHI 17K18734 and 17H04823.
FP was partially supported by Labex Milyon (ANR-10-LABX-0070) of Universit\'e de Lyon.
SY was supported by ANR project MESA.

\section{Preliminaries}\label{sec:preliminaries}

\subsection{Noncommutative rational functions and expressions}

Let us denote by $\Cx$ the algebra of noncommutative polynomials over $\C$ in the indeterminates $x_1,\ldots,x_d$.
It is well-known that for its commutative counterpart, namely the ring of commutative polynomials, one can uniquely construct the field of fractions of this polynomial ring by the quotients of polynomials.
However, constructing a skew field of fractions of $\Cx$ is highly non-trivial.
Moreover, there exist skew fields of fractions of $\Cx$ which are not isomorphic (see, for example, \cite[Exercise 7.2.13]{Coh06}).
Nevertheless, there exists a unique field of fractions of $\Cx$, which has some universal property.
We call this skew field of fractions of $\Cx$ the \emph{free field} and denote it by $\C\lff x_1,\dots,x_d\rff$.
An element in the free field is called a \emph{noncommutative rational function}.

Since the precise definition of the universal
property of the free field is not relevant to this paper, we refer the interested reader to
\cite[Chapter 7]{Coh06} for a more detailed description (as well as some ring-theoretic construction) of the free field.
We take the existence of the free field for granted and apply some recent results about it.

Like a commutative rational function can be represented by a class of quotients of polynomials, a noncommutative rational function can be represented by a class of noncommutative rational expressions.
One can think of a noncommutative rational expression as a representation of a noncommutative rational function.
Actually, in \cite{Ami66}, Amitsur constructed the free field $\C\lff x_1,\dots,x_d\rff$ from noncommutative rational expressions (see also \cite[Section 2]{K-VV12}).

More precisely, \emph{noncommutative rational expressions} are syntactically valid combinations of $\C$ and symbols $x_1,\ldots,x_d$ with $+$, $\cdot$, ${}^{-1}$, and $()$, which are respectively corresponding to addition, multiplication, taking inverse, and ordering these operations. For the sake of completeness, let us mention that polynomial expressions are obtained in precisely the same manner but without involving inverses.
We admit that this definition, though easy to grasp, is not entirely rigorous as it relies on the tacit agreement about what is meant by syntactically valid.
Thus we refer here to \cite{HW15} for an alternative definition based on the graph theory, by which arithmetic operations for rational expressions can be interpreted as operations on graphs (see also \cite[Section IV.1]{Yin20}).
We emphasize that noncommutative rational expressions (in contrast to noncommutative rational functions, which we will define later) are formal objects obeying no arithmetic rules like commutativity or associativity.
For example, though clearly, they represent the same function, $x_1 + (-1) \cdot x_1$ and $0$ are two distinct rational expressions.
Similarly, $x_1 \cdot (x_2 \cdot x_1)$ and $(x_1 \cdot x_2) \cdot x_1$ are different noncommutative rational expressions. Still, since they show the same behavior when evaluated on associative algebras, we will write shorthand $x_1 \cdot x_2 \cdot x_1$ or even $x_1 x_2 x_1$ for better legibility as the inherent ambiguity does not cause any problems.

One can also define \emph{matrix-valued noncommutative rational expressions}; see Definition 2.1 in \cite{K-VV09}.
Those are possible combinations of symbols $A \otimes 1$ and $A \otimes x_j$ for $j=1,\ldots,d$, for each rectangular matrix $A$ over $\C$ of arbitrary size, with $+$, $\cdot$, ${}^{-1}$, and $()$, where the operations are required to be compatible with the matrix sizes. Notice that $\otimes$ has only symbolic meaning here but will turn into the ordinary tensor product (over $\C$) under evaluation, as will be defined below.

Let us enumerate the rules which allow us to recursively compute for every matrix-valued noncommutative rational expression $R$ the domain $\dom_\A(R)$ of $R$ for every unital complex algebra $\A$ and evaluations $R(X)$ of $R$ at any point $X\in\dom_\A(R)$; note that the evaluation $R(X)$ of a $p \times q$ matrix-valued noncommutative rational expression $R$ and every point $X\in\dom_\A(R)$ belongs to $M_{p\times q}(\C) \otimes \A \cong M_{p\times q}(\A)$.
\begin{itemize}
     \item If $R = A \otimes 1$ for some $A\in M_{p \times q}(\C)$, then $R$ is a $p\times q$ matrix-valued noncommutative rational expression with $\dom_\A(R) := \A^d$ and $R(X) := A \otimes 1_\A$ for every $X \in \A^d$.
     \item If $R = A \otimes x_j$ for some $A\in M_{p \times q}(\C)$ and $1\leq j \leq d$, then $R$ is a $p\times q$ matrix-valued noncommutative rational expression with $\dom_\A(R) := \A^d$ and $R(X) := A \otimes X_j$ for every $X=(X_1,\dots,X_d) \in \A^d$.
     \item If $R_1,R_2$ are $p \times q$ matrix-valued noncommutative rational expressions, then $R_1 + R_2$ is a $p \times q$ matrix-valued noncommutative rational expression with $\dom_\A(R_1 + R_2) := \dom_\A(R_1) \cap \dom_\A(R_2)$ and $(R_1 + R_2)(X) := R_1(X) +_\A R_2(X)$ for every $X\in \dom_\A(R_1 + R_2)$, where $+_\A$ stands for the addition $M_{p \times q}(\A) \times M_{p\times q}(\A) \to M_{p \times q}(\A)$.
     \item If $R_1,R_2$ are $p\times q$ respectively $q\times r$ matrix-valued noncommutative rational expressions, then $R_1 \cdot R_2$ is a $p \times r$ matrix-valued noncommutative rational expression with $\dom_\A(R_1 \cdot R_2) := \dom_\A(R_1) \cap \dom_\A(R_2)$ and $(R_1 \cdot R_2)(X) := R_1(X) \cdot_\A R_2(X)$ for every $X\in \dom_\A(R_1 \cdot R_2)$, where $\cdot_\A$ stands for the matrix multiplication $M_{p\times q}(\A) \times M_{q\times r}(\A) \to M_{p \times r}(\A)$.
     \item If $R$ is a $p \times p$ matrix-valued noncommutative rational expression, then $$\dom_\A(R^{-1}) := \{X\in \dom_\A(R) \mid \text{$R(X)$ is invertible in $M_p(\A)$}\}$$ and $R^{-1}(X) := R(X)^{-1}$ for every $X \in \dom_\A(R^{-1})$.
 \end{itemize}
 
 Note that the (scalar-valued) noncommutative rational expressions we introduced before belong to the larger class of $1 \times 1$ matrix-valued noncommutative rational expressions; see Remark 2.11 in \cite{K-VV09}.
 
 For the reader's convenience, we introduce two types of matrix-valued noncommutative rational expressions, which are important in a practical sense. 
\begin{itemize}
 \item A noncommutative rational expression evaluated in formal tensor products of matrices and formal variables like as 
    $$R = r(A_1 \otimes x_1, A_2 \otimes x_2, \ldots, A_d \otimes x_d)$$ 
    where $r$ is a (scalar-valued) noncommutative rational expression and $A_i \in M_p(\C)$ for $1 \le i \le d$. In other words, in this case, we amplify formal variables by matrices and then consider their (scalar-valued) rational expression. 
    \item A matrix which consists of (scalar-valued) noncommutative rational expressions 
    $$R = (r_{i j})_{1 \le i \le p, 1 \le j \le q}.$$
    This can be seen as a $p \times q$ matrix-valued noncommutative rational expression
    by identifying with $\sum_{i j}(a_{i} \otimes 1) r_{i j} (b_{j} \otimes 1)$ where $a_i \in M_{p \times 1}(\C)$ and $b_j \in M_{1 \times q}(\C)$ are standard basis of $\C^p$ and $\C^q$.
    We will implicitly use this viewpoint later (for example, in the proof of Proposition \ref{prop:full_sa_evaluation}). 
\end{itemize}

A class of matrix-valued noncommutative polynomial expressions is affine linear pencils. An \emph{affine linear pencil (in $d$ variables with coefficients from $M_k(\C)$)} is a $k\times k$ matrix-valued noncommutative polynomial expression of the form
$$A=A_0 \otimes 1 + A_1 \otimes x_1 + \dots + A_d \otimes x_d$$
with coefficient matrices $A_0,A_1,\dots,A_d$ belonging to $M_k(\C)$.
Notice, once again, that we omit the parentheses for better readability, as each syntactically valid placement of parentheses will produce the same result under evaluation.
If $\A$ is any unital complex algebra and $X\in \A^d$, then
$$A(X) = A_0 \otimes 1_\A + A_1 \otimes X_1 + \dots + A_d \otimes X_d \in M_k(\C) \otimes \A \cong M_k(\A).$$
 
 Of particular interest are matrix evaluations. For each matrix-valued noncommutative rational expression $R$, we put
 $$\dom_{M(\C)}(R) := \coprod_{N=1}^\infty \dom_{M_N(\C)}(R),$$
 i.e., $\dom_{M(\C)}(R)$ is the subset of all square matrices over $\C$ where evaluation of $R$ is well-defined.
 A matrix-valued noncommutative rational expression $R$ is said to be \emph{non-degenerate} if it satisfies $\dom_{M(\C)}(R) \neq \emptyset$. In the sequel, we will make use of the following important fact.
 
 \begin{thm}[Remark 2.3 in \cite{K-VV09}]\label{thm:matrix_domain_large_dimension}
 Let $R$ be a non-degenerate matrix-valued noncommutative rational expression. Then there exists some $N_0=N_0(R)\in\N$ such that $\dom_{M_N(\C)}(R) \neq \emptyset$ for all $N\geq N_0$.
 \end{thm}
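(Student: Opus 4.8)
\emph{Proof sketch.} The plan is to reduce the statement, via linearization, to the case where the expression is $L^{-1}$ for a single affine linear pencil $L$, and then to read off the conclusion from the fact that the inner rank of a matrix over $\Cx$ is detected by its generic matrix evaluations of large enough size.

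As a warm-up, observe that $\dom$ and evaluation are compatible with block-diagonal direct sums: a routine structural induction on $R$ shows that $X\in\dom_{M_m(\C)}(R)$ and $X'\in\dom_{M_{m'}(\C)}(R)$ imply $X\oplus X'\in\dom_{M_{m+m'}(\C)}(R)$, with $R(X\oplus X')$ identified with $R(X)\oplus R(X')$ --- the only step that is not purely formal being the inverse rule, which is respected on each diagonal block since a block-diagonal matrix is invertible exactly when each diagonal block is. Hence $S:=\{N\in\N:\dom_{M_N(\C)}(R)\neq\emptyset\}$ is an additive subsemigroup of $\N$, nonempty by non-degeneracy; in particular $\dom_{M_N(\C)}(R)\neq\emptyset$ whenever $N$ is a multiple of some fixed $N_1\in S$. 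This does not yet prove the theorem --- it does not exclude, for instance, $S\subseteq 2\N$ --- so one must pin down which dimensions genuinely occur, and for this we linearize.

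Apply the standard linearization of matrix-valued noncommutative rational expressions, performed so as to keep track of the domain of the \emph{expression} rather than only of the rational function it represents (cf.\ \cite{Coh06,HW15,K-VV09}): it produces an affine linear pencil $L=A_0\otimes1+A_1\otimes x_1+\dots+A_d\otimes x_d$ of some size $k$, together with rectangular scalar matrices $u,v$ of compatible sizes, such that for every unital complex algebra $\A$ one has $\dom_\A(R)=\{X\in\A^d:\ L(X)\ \text{is invertible}\}$ and $R(X)=(u\otimes1)\,L(X)^{-1}\,(v\otimes1)$ on this set. In particular, non-degeneracy of $R$ says exactly that $L(X_0)$ is invertible for some $X_0\in M_{N_1}(\C)^d$, and it now suffices to prove the theorem for $R=L^{-1}$, since $L(X)$ invertible forces $X\in\dom_{M_N(\C)}(R)$. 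Such a pencil $L$ is \emph{full} over $\Cx$, i.e.\ it admits no factorization $L=PQ$ with $P$ of size $k\times r$ and $Q$ of size $r\times k$ for some $r<k$; otherwise $L(X)=P(X)Q(X)$ would have rank at most $rN<kN$ for every $X\in M_N(\C)^d$, contradicting non-degeneracy. By the theorem of Cohn identifying the inner rank of a matrix over $\Cx$ with the rank of its generic matrix evaluations of sufficiently large size (see \cite{Coh06}, and Remark~2.3 of \cite{K-VV09}), fullness of $L$ yields $N_0\in\N$ such that for every $N\ge N_0$ the generic $X\in M_N(\C)^d$ satisfies $\rank L(X)=kN$, i.e.\ $L(X)$ is invertible; combined with $\dom_{M_N(\C)}(R)=\{X\in M_N(\C)^d:\ L(X)\ \text{invertible}\}$ this gives $\dom_{M_N(\C)}(R)\neq\emptyset$ for all $N\ge N_0$.

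The crux is this last input. Going from invertibility at one matrix dimension to invertibility at \emph{all} sufficiently large dimensions is genuinely non-elementary: the naive constructions --- direct sums and block-triangular extensions --- only ever produce new evaluation points whose dimension is a \emph{multiple} of one that already works, and it is precisely Cohn's inner-rank theorem, which rests on $\Cx$ being a free ideal ring with the free field as its universal field of fractions, that supplies the missing uniformity in $N$. A secondary technical point, to be handled with care in the linearization step, is that the domain of a rational \emph{expression} may be strictly smaller than that of the rational \emph{function} it represents (for example $x_1 x_1^{-1}+x_2$ versus $1+x_2$); one must therefore use a linearization faithful to the expression in order for the identity $\dom_\A(R)=\{X:\ L(X)\ \text{invertible}\}$, and hence the transfer of non-degeneracy to $L$, to hold.
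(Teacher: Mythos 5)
The paper itself offers no proof of this statement: it is imported verbatim from Remark 2.3 of \cite{K-VV09}, so the only meaningful comparison is with the route of that source, which your sketch essentially reconstructs (linearize the expression, convert non-degeneracy into fullness of a pencil, and invoke Cohn's theory to get invertible evaluations in all sufficiently large dimensions). Your warm-up on direct sums and the fullness argument are fine; the two load-bearing steps, however, need more care. First, the linearization with domain \emph{equality}, $\dom_\A(R)=\{X\in\A^d : L(X)\ \text{invertible}\}$, is strictly stronger than the formal linear representations used in this paper: Definition \ref{def:formal_linear_representation} only demands $\dom_\A(R)\subseteq\dom_\A(A^{-1})$, and the recursive Schur-complement construction of Theorem \ref{linear} genuinely fails the reverse inclusion. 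For instance, for $R=(x_1^{-1})^{-1}$ the algorithm produces the $4\times 4$ pencil $\left(\begin{smallmatrix} 0 & 1 & 0 & 0\\ -1 & 0 & 1 & 0\\ 0 & 0 & 1 & -x_1\\ 0 & 1 & 0 & 1\end{smallmatrix}\right)$, which is invertible at $X_1=0$ although $0\notin\dom(R)$. The statement you need is true, but it requires an extra construction you only allude to: e.g.\ augment the pencil block-diagonally by, for every subexpression $S$ of $R$ that gets inverted, the pencils $A_S$ and $\begin{pmatrix} 0 & u_S\\ v_S & A_S\end{pmatrix}$ coming from its (inductively domain-faithful) representation; block-diagonal invertibility then certifies membership in the domain step by step, while $R$ is still recovered from a corner of the inverse. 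Since your entire reduction to ``$R=L^{-1}$'' rests on the implication ``$L(X)$ invertible $\Rightarrow X\in\dom_{M_N(\C)}(R)$'', this construction (or a precise citation, e.g.\ to the formula-to-pencil correspondence in \cite{HW15}) must be supplied rather than asserted.

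Second, watch the provenance of your crucial input. The fact ``$L$ full $\Rightarrow$ for all sufficiently large $N$ the generic $X\in M_N(\C)^d$ makes $L(X)$ invertible'' is exactly what this paper \emph{derives from} Theorem \ref{thm:matrix_domain_large_dimension} in the first paragraph of the proof of Proposition \ref{prop:full_sa_evaluation} (via Remarks \ref{rem:linear-full} and \ref{rem:expressions_vs_functions}), and Remark 2.3 of \cite{K-VV09}, which you cite at precisely this point, \emph{is} the statement being proved; either reference would make the argument circular. The admissible sources are independent ones: Cohn's specialization lemma (\cite{Coh06}), or Proposition 2.4 of \cite{Vol2021} proved by those means. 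Granting such a reference and the domain-faithful linearization, your outline is correct, and it is a reasonable reconstruction of the proof the paper chose to cite rather than reproduce; the uniformity in $N$ does indeed come from Cohn's theory and not from any elementary direct-sum or block-triangular manipulation, as you rightly emphasize.
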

 
 Two non-degenerate matrix-valued noncommutative rational expressions $R_1, R_2$ are called \emph{$M(\C)$-evaluation equivalent} if the condition $R_1(X) = R_2(X)$ is satisfied for all $X\in \dom_{M(\C)}(R_1) \cap \dom_{M(\C)}(R_2)$.
 
 As we mentioned earlier, one can construct the free field out of noncommutative rational expressions.
 This construction can be done by evaluating (scalar-valued) noncommutative rational expressions on scalar-valued matrices.
 For a non-degenerate noncommutative rational expression $r$, we denote by $[r]$ its equivalence class of noncommutative rational expressions with respect to $M(\C)$-evaluation equivalence. We endow the set of all such equivalence classes with the arithmetic operations $+$ and $\cdot$ defined by $[r_1] + [r_2] := [r_1 + r_2]$ and $[r_1] \cdot [r_2] := [r_1 \cdot r_2]$. Notice that the arithmetic operations are indeed well-defined as one has $\dom_{M(\C)}(r_1) \cap \dom_{M(\C)}(r_2) \neq \emptyset$ for any two non-degenerate scalar-valued noncommutative rational expressions $r_1$ and $r_2$; see the footnote on page 52 of \cite{K-VV12}, for instance.
 It is known (see Proposition 2.2 in \cite{K-VV12}) that the set of all equivalence classes of noncommutative rational expressions with respect to $M(\C)$-evaluation equivalence endowed with the arithmetic operations $+$ and $\cdot$ forms the free field $\FF$.

\subsection{Linearization}
 
Let us recall the following terminology introduced in \cite[Definition 4.10]{HMS18}.

\begin{dfn}[Formal linear representation]\label{def:formal_linear_representation}
Let $R$ be a $p \times q$ matrix-valued noncommutative rational expression in the variables $x_1,\dots,x_d$. A \emph{formal linear representation $\rho=(u,A,v)$ of $R$ (of dimension $k$)} consists of an affine linear pencil
$$A=A_0 \otimes 1 + A_1 \otimes x_1 + \dots + A_d \otimes x_d$$
in $d$ variables and with coefficients $A_0,A_1,\dots,A_d$ from $M_k(\C)$ and matrices $u \in M_{p \times k}(\C)$ and $v\in M_{k \times q}(\C)$, such that the following condition is satisfied: for every unital complex algebra $\A$, we have that $\dom_\A(R) \subseteq \dom_\A(A^{-1})$ and for each $X \in \dom_\A(R)$ it holds that $R(X) = u A(X)^{-1} v$, where $A(X) \in M_k(\A)$.
\end{dfn}

Note that we use here a different sign convention by requiring $R(X) = u A(X)^{-1} v$ instead of $R(X) = - u A(X)^{-1} v$; this, however, does not affect the validity of the particular results that we will take from \cite{HMS18}. Furthermore, as we will exclusively work with formal linear representations for matrix-valued noncommutative rational expressions, we will go without specifying them as matrix-valued formal linear representations like it was done in \cite{HMS18}.

It follows from \cite[Theorem 4.12]{HMS18} that, indeed, every matrix-valued noncommutative rational expression $R$ admits a formal linear representation $\rho=(u,A,v)$. For the reader's convenience, we include with Theorem \ref{linear} the precise statement and its constructive proof.
In doing so, we will see that Algorithm 4.11 in \cite{HMS18}, on which the proof of Theorem 4.12 in the same paper relies, provides a formal linear representation $\rho=(u,A,v)$ of the $p \times q$ matrix-valued noncommutative rational expression $R$ with the additional property that the dimension $k$ of $\rho$ is larger than $\max\{p,q\}$ and that both $u$ and $v$ have maximal rank; we will call such $\rho$ \emph{proper}.
Note that if $R$ is a scalar-valued rational expression, then a proper formal linear representation $\rho$ simply means that $u$ and $v$ are non-zero vectors.
In general, due to the restriction $k\geq \max\{p,q\}$, we have that the rank of $u$ is $p$ and the rank of $v$ is $q$ for any proper formal linear representation $\rho=(u,A,v)$ of $R$.
This notion of proper formal linear representation will be important in the sequel.

\begin{thm}[Theorem 4.12 in \cite{HMS18}]\label{linear}
Every matrix-valued noncommutative rational expression admits a formal linear representation in the sense of Definition \ref{def:formal_linear_representation} which is also proper.
\end{thm}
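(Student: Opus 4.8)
The plan is to prove Theorem~\ref{linear} by structural induction on the construction of the matrix-valued noncommutative rational expression $R$, following Algorithm 4.11 in \cite{HMS18}. The base cases are $R = A \otimes 1$ and $R = A \otimes x_j$ for a rectangular matrix $A \in M_{p \times q}(\C)$; here one writes down an explicit proper formal linear representation. For instance, for $R = A \otimes x_j$ one can take dimension $k = p + q$ and the block pencil
\[
A(x) = \begin{pmatrix} 0 & A \otimes x_j \\ -I_q \otimes 1 & 0 \end{pmatrix}, \qquad u = \begin{pmatrix} I_p & 0 \end{pmatrix}, \qquad v = \begin{pmatrix} 0 \\ I_q \end{pmatrix},
\]
whose Schur complement yields $u A(x)^{-1} v = A \otimes x_j$ on all of $\A^d$, with $k = p+q > \max\{p,q\}$ and $u$, $v$ of full rank; the case $A \otimes 1$ is analogous but even simpler (one can realize it with a single block, padding to achieve $k > \max\{p,q\}$).

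For the inductive step, suppose $R_1$ and $R_2$ admit proper formal linear representations $\rho_i = (u_i, A_i, v_i)$ of dimension $k_i$. For $R_1 + R_2$ (when the matrix sizes agree), I would form $A = \operatorname{diag}(A_1, A_2)$, $u = (u_1 \ \ u_2)$, $v = (v_1^{\mathsf T} \ \ v_2^{\mathsf T})^{\mathsf T}$; then $uA(X)^{-1}v = u_1 A_1(X)^{-1} v_1 + u_2 A_2(X)^{-1} v_2 = R_1(X) + R_2(X)$ on $\dom_\A(R_1) \cap \dom_\A(R_2) = \dom_\A(R_1 + R_2)$, and the inclusion $\dom_\A(R_1+R_2) \subseteq \dom_\A(A^{-1})$ follows from the block-diagonal structure together with the two inductive inclusions. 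For $R_1 \cdot R_2$ one uses the standard linearization trick: with $A_1$ of size $k_1$ attached to $R_1$ ($p \times q$) and $A_2$ of size $k_2$ attached to $R_2$ ($q \times r$), set
\[
A(x) = \begin{pmatrix} A_1(x) & -v_1 u_2 \\ 0 & A_2(x) \end{pmatrix}, \qquad u = \begin{pmatrix} u_1 & 0 \end{pmatrix}, \qquad v = \begin{pmatrix} 0 \\ v_2 \end{pmatrix},
\]
so that $u A(X)^{-1} v = u_1 A_1(X)^{-1} v_1 u_2 A_2(X)^{-1} v_2 = R_1(X) R_2(X)$ by block inversion, valid on $\dom_\A(R_1) \cap \dom_\A(R_2)$. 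For $R^{-1}$ (with $R$ a $p \times p$ expression and $\rho = (u,A,v)$ proper of dimension $k \geq p$), one uses that on $\dom_\A(R^{-1})$ both $R(X) = uA(X)^{-1}v$ and $uA(X)^{-1}v$ are invertible, and checks via Schur complements that
\[
\widehat{A}(x) = \begin{pmatrix} A(x) & v \\ -u & 0 \end{pmatrix}, \qquad \widehat{u} = \begin{pmatrix} 0 & I_p \end{pmatrix}, \qquad \widehat{v} = \begin{pmatrix} 0 \\ I_p \end{pmatrix}
\]
satisfies $\widehat{u}\,\widehat{A}(X)^{-1}\,\widehat{v} = (u A(X)^{-1} v)^{-1} = R(X)^{-1}$ and that $\widehat{A}(X)$ is invertible precisely when $A(X)$ and its Schur complement $u A(X)^{-1} v = R(X)$ are, giving $\dom_\A(R^{-1}) \subseteq \dom_\A(\widehat A^{-1})$.

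The remaining bookkeeping is to verify at each step that properness is preserved: that $k > \max\{p,q\}$ for the output (immediate from the additivity/stacking of the block dimensions and the base cases giving strict inequality) and that the new $u$ and $v$ retain full rank. The latter is the one point requiring a little care in the inverse step, since $\widehat u$ and $\widehat v$ there are built from identity blocks so full rank $p$ is automatic, while in the sum and product cases full rank follows because $u_i, v_i$ already have full rank and are placed in complementary block positions; if a particular construction momentarily fails $k > \max\{p,q\}$ one pads with an extra identity block, which changes neither the evaluation nor the domain. I expect the main obstacle to be purely notational: carefully tracking the block structure, the matrix sizes, and the domain inclusions simultaneously through the four cases, rather than any conceptual difficulty — the Schur-complement identities are the heart of the matter and are routine once the blocks are set up correctly. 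Since this is exactly Algorithm 4.11 and Theorem 4.12 of \cite{HMS18} (up to our sign convention $R(X) = uA(X)^{-1}v$, which only flips the sign of $v$ throughout and hence is immaterial), I would cite that source for the full details and present here only the construction and the key identities.
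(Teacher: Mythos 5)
Your overall strategy is exactly the paper's: structural induction following Algorithm 4.11 of \cite{HMS18}, with the same block constructions for sums and products, and an inverse step that is just a harmless rearrangement of the paper's pencil $\begin{pmatrix} 0 & u \\ v & A \end{pmatrix}$ (your Schur-complement verification of it is correct). However, your explicit base case is wrong, and this is a genuine gap rather than a notational slip. First, the pencil $\begin{pmatrix} 0 & A \otimes x_j \\ -I_q \otimes 1 & 0 \end{pmatrix}$ does not even have consistent block sizes unless $p=q$ (the lower-left block must be $q\times p$). More importantly, Definition \ref{def:formal_linear_representation} requires $\dom_\A(R) \subseteq \dom_\A(A^{-1})$, and for $R = A\otimes x_j$ the domain is all of $\A^d$; your anti-diagonal pencil fails to be invertible at, say, $X_j=0$, so the required inclusion fails. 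Even at points where it is invertible (square case, $A\otimes X_j$ invertible), one computes
\[
\begin{pmatrix} 0 & B \\ -I & 0 \end{pmatrix}^{-1} = \begin{pmatrix} 0 & -I \\ B^{-1} & 0 \end{pmatrix},
\]
so $u\,A(X)^{-1}v$ with your $u,v$ picks out $-I$, not $A\otimes X_j$. The paper instead uses the unipotent pencil $\begin{pmatrix} I_p \otimes 1 & -R \\ 0 & I_q \otimes 1\end{pmatrix}$ with $u=(I_p\ 0)$, $v=(0\ I_q)^T$, which is invertible at \emph{every} point of $\A^d$ and yields $uA(X)^{-1}v = R(X)$ on the nose; it also handles $A\otimes 1$ and $A\otimes x_j$ uniformly and guarantees properness regardless of the rank of $A$. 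This last point also undercuts your ``single block with padding'' suggestion for $R=A\otimes 1$: a realization of the form $u=(A\ 0)$ with a constant invertible pencil has $\operatorname{rank}(u)=\operatorname{rank}(A)$, which is less than $p$ when $A$ is rank-deficient, so properness would be lost. With the base case replaced by the paper's, your inductive steps (whose domain and rank bookkeeping you describe correctly) do complete the proof.
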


\begin{proof}
Here, we give the algorithm which inductively builds a proper linear representation of any matrix-valued noncommutative rational expression.
For $R = A \otimes 1$ or $R = A \otimes x_j$ for some $A \in M_{p \times q}(\C)$ and $1 \le j \le d$ we have
$$ R(X) = 
\left(\begin{array}{cc}
I_p & 0_{p \times q}
\end{array}\right)
\left(\begin{array}{cc}
I_p \otimes 1_{\A} & -R(X) \\
0_{q \times p} & I_q \otimes 1_{\A}
\end{array}\right)^{-1}
\left(\begin{array}{c}
0_{p \times q} \\
I_q
\end{array}\right)
$$
where $I_p \in M_p(\C)$ is an identity matrix. We obtain a proper formal linear representation in this way.

If the $p \times q$ matrix-valued noncommutative expressions $R_1$ and $R_2$ admit proper formal linear representations $(u_1,A_1,v_1)$ and $(u_2,A_2, v_2)$
then we have
$$
(R_1 + R_2)(X) = 
\left(\begin{array}{c c}
u_1 & u_2
\end{array}\right)
\left(\begin{array}{c|c}
A_1(X) & 0_{k_1 \times k_2} \\ \hline
0_{k_2 \times k_1} & A_2(X)
\end{array}\right)^{-1}
\left(\begin{array}{c}
v_1 \\
v_2
\end{array}\right).
$$
This gives us a proper formal linear representation since $(u_1\ u_2)$, resp. $(v_1\ v_2)^T$ is of rank $p$, resp. $q$.

If $R_1, R_2$ are $p \times q$, resp. $q \times r $ matrix-valued noncommutative rational expressions and admit formal linear representations $(u_1, A_1, v_1)$, resp. $(u_2, A_2, v_2)$ of dimension $k_1$, resp. $k_2$  then we have  
$$
 (R_1 \cdot R_2)(X) =
\left(\begin{array}{c c}
u_1 & 0_{p \times k_2}
\end{array}\right)
\left(\begin{array}{c|c}
A_1(X) & -v_1 u_2 \\ \hline
0_{k_2 \times k_1} & A_2(X)
\end{array}\right)^{-1}
\left(\begin{array}{c}
0_{k_1 \times r} \\
v_2
\end{array}\right).
$$
We obtain a proper formal linear representation since $(u_1\ 0_{p\times k_2})$, resp. $(0_{k_1\times r}\ v_2)^T$ is of rank $p$, resp. $q$.

If $R$ is a $p \times p$ matrix-valued noncommutative rational expression which admits a formal linear representation $(u,A,v)$ of dimension $k$, then we have
$$
R^{-1}(X) =
\left(\begin{array}{c c}
I_p & 0_{p \times k}
\end{array}\right)
\left(\begin{array}{c|c}
 0_{p\times p} & u \\ \hline
 v & A(X)
\end{array}\right)^{-1}
\left(\begin{array}{c}
-I_p \\
0_{k \times p}
\end{array}\right),
$$
where $X$ belongs to an appropriate domain for each step.
This gives us a proper formal linear representation.

Finally, since all matrix-valued noncommutative rational expressions can be represented by finitely many of the above steps, any matrix-valued noncommutative rational expression has such a formal linear representation that is proper.
\end{proof}

In the non-degenerate case, formal linear representations are connected with the concept of representations for noncommutative rational functions, which is used, for instance, in \cite{CR94,CR99}; this will be addressed in Remark \ref{rem:linear-full} and Remark \ref{rem:expressions_vs_functions}. Before, we need to recall the following terminology.

\begin{dfn}[Inner rank and fullness]
Let $\mathcal{R}$ be a ring. For $A \in M_{n \times m}(\mathcal{R})$ we define the \emph{inner rank} $\rho_{\mathcal{R}}(A)$ by
$$\rho_{\mathcal{R}}(A) = \min \{ r \in \N \mid A = BC, \ B \in M_{n \times r}(\mathcal{R}), \ C \in M_{r \times m}(\mathcal{R})\},$$
and $\rho_{\mathcal{R}}(0) = 0$. In addition we call $A$ \emph{full} if $\rho_{\mathcal{R}}(A) = \min\{n,m\}$.
\end{dfn}

\begin{rem}\label{rem:linear-full}
\begin{enumerate}[leftmargin=*]
\item Let $A$ be a matrix over noncommutative polynomials in a tuple $x=(x_1,\dots,x_d)$ of formal variables. According to Theorem 7.5.13 in \cite{Coh06} (see also A.2 in \cite{MSY18}), we have
$$\rho_{\C\langle x \rangle}(A)=\rho_{\C\lff x\rff}(A).$$
For this reason, we say $A$ is full for a square matrix $A$ over the noncommutative polynomials without mentioning which algebra we consider.  
\item\label{it:invertible_full} Let $A$ be an affine linear pencil in $x$ with coefficients taken from $M_k(\C)$. We may view $A$ as an element in $M_k(\C) \otimes \C\langle x \rangle \cong M_k(\C\langle x\rangle)$, i.e., $A=A(x)$ is considered as a matrix over the ring $\C\langle x\rangle$. We notice that if there exists a tuple $X\in M_N(\C)^d$ such that $A(X)$ is invertible in $M_k(\C) \otimes M_N(\C) \cong M_{kN}(\C)$, or equivalently, if $\dom_{M(\C)}(A^{-1}) \neq \emptyset$, then $A$ must be full. In fact, if $A$ is not full, then any factorization $A=BC$ with $B \in M_{k \times r}(\C\langle x \rangle)$ and $C \in M_{r \times k}(\C\langle x\rangle)$ for $r = \rho(A) < k$ yields under evaluation $A(X) = B(X) C(X)$ at any point $X\in M_N(\C)^d$, so that $A(X)$ is never invertible.

On the other hand, if $A$ is full, then $A$ is invertible as a matrix over $\C\lff x\rff$. Indeed, fullness and invertibility are equivalent for any skew field (see Lemma 5.20 in \cite{MSY18}).

\item\label{it:representation_full} Now, let $R$ be a non-degenerate matrix-valued noncommutative rational expression. From Theorem \ref{linear}, we know that there exists a formal linear representation $\rho=(u,A,v)$; in particular, we have that
\begin{align*}
\dom_{M(\C)}(R) & \subseteq \dom_{M(\C)}(A^{-1})\\ 
&=\coprod^\infty_{N=1} \{ X \in M_N(\C)^d \mid \text{$A(X)$ invertible in $M_{kN}(\C)$}\}.
\end{align*}
Since $R$ is non-degenerate, we find $X\in \dom_{M(\C)}(R)$; from the aforementioned inclusion and \ref{it:invertible_full}, we infer that $A$ is a full matrix.

\item\label{it:display_full} Suppose that $R$ is a non-degenerate $p\times p$ matrix-valued noncommutative rational expression such that $R^{-1}$ is non-degenerate as well. Let $\rho=(u,A,v)$ be a formal linear representation of $R$; we associate to $\rho$ the affine linear pencil
$$\tilde{A} := \begin{pmatrix} 0_{p \times p} & u\\ v & A \end{pmatrix}.$$
We claim that both $A$ and $\tilde{A}$ are full. For $A$, we already know from \ref{it:representation_full} that this is true. To check fullness of $\tilde{A}$, we use that $R^{-1}$ is non-degenerate, which guarantees the existence of some $X\in \dom_{M(\C)}(R^{-1})$. Since in particular $X\in \dom_{M(\C)}(R)$, we get as $\rho$ is a formal linear representation of $R$ that $A(X)$ is invertible and $R(X) = u A(X)^{-1} v$. Because $X\in \dom_{M(\C)}(R^{-1})$, we know that $R(X)$ is invertible. Hence, the Schur complement formula implies that the matrix $\tilde{A}(X)$ is invertible. Thanks to \ref{it:invertible_full}, this implies that the affine linear pencil $\tilde{A}$ is full.  
\end{enumerate}
\end{rem}

The following lemma explains that non-degenerate matrix-valued noncommutative rational expressions induce matrices over the free field in some very natural way.

\begin{lm}\label{lem:representing_rational_expression}
Let $R$ be a $p\times q$ matrix-valued noncommutative rational expression in $d$ formal variables. If $R$ is non-degenerate, then $x=(x_1,\dots,x_d) \in \dom_\FF(R)$ and consequently, $R(x) \in M_{p\times q}(\C\lff x\rff)$.
\end{lm}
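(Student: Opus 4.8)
The plan is to prove the statement by induction on the structure of the matrix-valued noncommutative rational expression $R$, following exactly the recursive rules that define $\dom_\A$ and $R(X)$. The point to keep in mind is that $\dom_\FF(R)$ refers to evaluation in the free field $\FF$ viewed as a unital complex algebra, so all the abstract rules apply; the only nontrivial content is showing that the tuple $x=(x_1,\dots,x_d)$ of generators itself lies in that domain whenever $R$ is non-degenerate.

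First I would handle the base cases: if $R = A\otimes 1$ or $R=A\otimes x_j$, then by definition $\dom_\FF(R)=\FF^d\ni x$, so there is nothing to prove, and $R(x)\in M_{p\times q}(\FF)$ is immediate. For the inductive step on $+$ and $\cdot$, I would use that non-degeneracy of $R_1+R_2$ (resp.\ $R_1\cdot R_2$) forces both $R_1$ and $R_2$ to be non-degenerate -- indeed $\dom_{M(\C)}(R_1+R_2)=\dom_{M(\C)}(R_1)\cap\dom_{M(\C)}(R_2)$ is contained in each factor's matrix-domain, so each factor has nonempty matrix-domain. By the induction hypothesis $x\in\dom_\FF(R_1)\cap\dom_\FF(R_2)=\dom_\FF(R_1+R_2)$, and similarly for the product; the evaluation then lives in the appropriate matrix algebra over $\FF$ by the stated rules.

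The genuinely delicate case is the inverse: suppose $R$ is a non-degenerate $p\times p$ matrix-valued rational expression and $R^{-1}$ is non-degenerate (this is implicitly the only situation in which $R^{-1}$ appears as a subexpression of a non-degenerate expression, and in general one argues that every subexpression of a non-degenerate expression inherits the property that its own inverses occurring in it are non-degenerate). By the induction hypothesis $x\in\dom_\FF(R)$, so $R(x)\in M_p(\FF)$ is defined; what remains is to show $R(x)$ is invertible over $\FF$, i.e.\ $x\in\dom_\FF(R^{-1})$. Here I would invoke the linearization machinery: by Theorem~\ref{linear} choose a formal linear representation $\rho=(u,A,v)$ of $R$, so that $R(x)=uA(x)^{-1}v$ with $A(x)\in M_k(\FF)$, and form $\tilde A=\begin{pmatrix}0_{p\times p}&u\\ v&A\end{pmatrix}$. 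By Remark~\ref{rem:linear-full}\ref{it:display_full}, non-degeneracy of $R$ and of $R^{-1}$ makes both $A$ and $\tilde A$ full as matrices over $\C\langle x\rangle$; by Remark~\ref{rem:linear-full}\ref{it:invertible_full} (fullness equals invertibility over a skew field) both $A(x)$ and $\tilde A(x)$ are invertible in the relevant matrix algebras over $\FF$. The Schur complement of $A(x)$ in $\tilde A(x)$ is $-uA(x)^{-1}v=-R(x)$, and invertibility of $\tilde A(x)$ together with invertibility of $A(x)$ forces this Schur complement, hence $R(x)$, to be invertible in $M_p(\FF)$. Therefore $x\in\dom_\FF(R^{-1})$ and $R^{-1}(x)=R(x)^{-1}\in M_p(\FF)$.

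The main obstacle I anticipate is the bookkeeping around non-degeneracy in the inductive step for $^{-1}$: one must be careful to arrange the induction so that whenever the inverse rule is applied to a subexpression $S$ inside a non-degenerate $R$, one actually knows that $S^{-1}$ (and not merely $S$) is non-degenerate, which is what licenses the appeal to Remark~\ref{rem:linear-full}\ref{it:display_full}. This is intuitively clear -- if $R$ is well-defined on some matrix tuple, then in particular every inverse taken along the way is well-defined there -- but it requires a clean formulation, perhaps by strengthening the inductive statement to: for every non-degenerate matrix-valued rational expression $R$, one has $x\in\dom_\FF(R)$ and moreover every sub-expression of $R$ of the form $S^{-1}$ satisfies that $S(x)$ is invertible over $\FF$. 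With that strengthening the argument closes smoothly, and all other cases are routine unwinding of the evaluation rules.
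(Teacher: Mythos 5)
Your proposal is correct and follows essentially the same route as the paper: a structural induction (the paper phrases it via the class $\mathfrak{R}_0$ of non-degenerate expressions with $x$ in their $\FF$-domain), with the inverse step handled exactly as you do, via a proper formal linear representation, fullness of $A$ and $\tilde{A}$ from Remark \ref{rem:linear-full}\ref{it:display_full}, invertibility over the skew field, and the Schur complement. Your extra care about non-degeneracy passing to subexpressions is a point the paper leaves implicit, but it does not change the argument.
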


\begin{proof}
Let us denote by $\mathfrak{R}_0$ the set of all non-degenerate matrix-valued noncommutative rational expressions $R$ which have the property $x \in \dom_{\C\lff x\rff}(R)$. We want to show that $\mathfrak{R}_0$ consists of all non-degenerate matrix-valued noncommutative rational expressions.
To verify this assertion, we proceed as follows. Firstly, we notice that both $R_1 + R_2$ and $R_1 \cdot R_2$ belong to $\mathfrak{R}_0$ whenever we take $R_1,R_2\in\mathfrak{R}_0$ for which the respective arithmetic operation is defined. Secondly, we consider some $R\in \mathfrak{R}_0$ which is of size $p \times p$ and has the property that $R^{-1}$ is non-degenerate. By Theorem \ref{linear}, there exists a formal linear representation $\rho=(u,A,v)$ of $R$, say of dimension $k$, and according to Remark \ref{rem:linear-full} \ref{it:display_full} we have that both $A$ and the associated affine linear pencil 
$$\tilde{A} := \begin{pmatrix} 0_{p \times p} & u\\ v & A \end{pmatrix}$$
are full, i.e., $A(x) \in M_k(\C\langle x\rangle)$ and $\tilde{A}(x) \in M_{k+p}(\C\langle x\rangle)$ become invertible as matrices over the free field $\C\lff x\rff$. Since $x\in\dom_{\C\lff x\rff}(R)$ as $R \in \mathfrak{R}_0$, we get $R(x) = u A(x)^{-1} v$, because $\rho$ is a formal linear representation of $R$. Putting these observations together, the Schur complement formula yields that $R(x) \in M_p(\C\lff x\rff)$ must be invertible, i.e., $x\in\dom_{\C\lff x\rff}(R^{-1})$ and thus $R^{-1} \in \mathfrak{R}_0$, as desired.
\end{proof}

\begin{rem}\label{rem:representing_rational_expression}
With arguments similar to the proof of Lemma \ref{lem:representing_rational_expression} as based on Remark \ref{rem:linear-full} \ref{it:display_full}, one finds that if $R_1,R_2$ are non-degenerate matrix-valued noncommutative rational expressions satisfying $R_1(x) = R_2(x)$, then $R_1 \sim_{M(\C)} R_2$.
In other words, matrix identities over $\C\lff x\rff$ are preserved under well-defined matrix evaluations.
\end{rem}

\begin{rem}\label{rem:expressions_vs_functions}
In the scalar-valued case, the conclusion of Lemma \ref{lem:representing_rational_expression} can be strengthened slightly. For that purpose, it is helpful to denote the formal variables out of which the noncommutative rational expressions are built by $\chi_1,\dots,\chi_d$ to distinguish them from the variables $x_1,\dots,x_d$ of the free skew field $\FF$; note that accordingly $x_j = [\chi_j]$ for $j=1,\dots,d$. Now, if $r$ is any non-degenerate scalar-valued noncommutative rational expression in the formal variables $\chi_1,\dots,\chi_d$, then Lemma \ref{lem:representing_rational_expression} tells us that $x\in \dom_{\C\lff x\rff}(r)$ and $r(x) \in \C\lff x \rff$. Moreover, we have the equality $r(x) = [r]$. This can be shown with a recursive argument similar to the proof of Lemma \ref{lem:representing_rational_expression}; notice that if a non-degenerate scalar-valued noncommutative rational expression $r$ satisfies $r(x)=[r]$ and has the additional property that $r^{-1}$ is non-degenerate, then $r(x) = [r]$ is invertible in $\C\lff x\rff$, which implies $x \in \dom_{\C\lff x\rff}(r^{-1})$ with $r^{-1}(x) = [r]^{-1} = [r^{-1}]$.

This has the consequence that every formal linear representation $\rho=(u,A,v)$ of $r$ satisfies $[r] = r(x) = u A(x)^{-1} v$. In the language of \cite{CR94,CR99}, this means that the formal linear representation $\rho$ of $r$ induces a (pure and linear) representation of the corresponding noncommutative rational function $[r]$.
\end{rem}

\subsection{Self-adjointness for matrix-valued noncommutative rational expressions}

When evaluations of matrix-valued noncommutative rational expressions $R$ at points $X=(X_1,\dots,X_d) \in\dom_\A(R)$ for $\ast$-algebras $\A$ are considered, it is natural to ask for conditions which guarantee that the result $R(X)$ is self-adjoint, i.e., $R(X)^\ast = R(X)$.
We aim at formulating conditions which concern the matrix-valued noncommutative rational expression $R$ and which depend only on the ``type'' of its arguments $X_1,\dots,X_d$ but not on their concrete realization in some $\ast$-algebra $\A$.
The case when $X_1,\dots,X_d$ are all self-adjoint was discussed in \cite[Section 2.5.7]{HMS18}. The following definition generalizes the latter to matrix-valued noncommutative rational expressions in self-adjoint and unitary variables.

Recall that an element $X$ in a complex $\ast$-algebra $\A$ with unit $1_A$ is called \emph{self-adjoint} if $X^\ast = X$, and $U\in\A$ is said to be \emph{unitary} if $U^\ast U = 1_\A = U U^\ast$.

\begin{dfn}[Self-adjoint matrix-valued noncommutative rational expressions]
\label{def:rational_expression_selfadjoint}
Let $R$ be a square matrix-valued noncommutative rational expression in $d_1+d_2$ formal variables which we denote by $x_1,\dots,x_{d_1}$, $u_1,\dots,u_{d_2}$.
We say that $R$ is \emph{self-adjoint of type $(d_1,d_2)$}, if for every unital complex $\ast$-algebra $\A$ and all tuples $X=(X_1,\dots,X_{d_1})$ and $U=(U_1,\dots,U_{d_2})$ of self-adjoint respectively unitary elements in $\A$, the following implication holds:
$$(X,U) \in \dom_{\A}(R) \qquad \Longrightarrow \qquad R(X,U)^\ast = R(X,U)$$
\end{dfn}

One comment on this definition is in order. The reader might wonder why the matrix-valued noncommutative rational expressions do not explicitly involve other variables $u_1^\ast,\dots,u_{d_2}^\ast$ serving as a placeholder for the adjoints of $u_1,\dots,u_{d_2}$. In fact, for (scalar-valued) noncommutative rational expressions, such an approach was presented, for instance, in the appendix of \cite{EKN20} (a version for noncommutative polynomials also appears in \cite{Var18}); more precisely, noncommutative rational expressions in collections of self-adjoint variables $x$, non-self-adjoint variables $y$, and their adjoints $y^\ast$ were considered. For our purpose, however, this has the slight disadvantage that non-degenerate noncommutative rational expressions of this kind (take $r(y,y^\ast) = (y y^\ast - 1)^{-1}$, for example) may have no unitary elements in their domain. On the other hand, there are noncommutative rational expressions (or even noncommutative polynomial expressions such as $y y y^\ast + y^\ast y y^\ast$) which are not self-adjoint on their entire domain but self-adjoint on unitaries.

The following example illustrates Definition \ref{def:rational_expression_selfadjoint} and highlights the effect of having two types of variables.

\begin{exam}\label{ex:rational_expressions}
$x_1 + x_2^{-1}$, $i(u_1 - u_1^{-1})$ and $u_1^{-1}x_1^{-1}u_1$ are self-adjoint noncommutative rational expressions since we have for self-adjoint elements $X_1,X_2$ and a unitary $U_1$ in their domain, 
\begin{eqnarray*}
(X_1 + X_2^{-1})^* &=& X_1^* + (X_2^*)^{-1} = X_1 + X_2^{-1} \\
 (i(U_1 - U_1^{-1}))^* &=& -i(U_1^* - (U_1^*)^{-1}) = i(U_1 - U_1^{-1}) \\
(U_1^{-1}X_1^{-1}U_1)^* &=& U_1^* (X_1^*)^{-1} (U_1^*)^{-1} =  U_1^{-1}X_1^{-1}U_1.
\end{eqnarray*}
 However, $u_1 + u_2^{-1}$, $i(x_1 - x_1^{-1})$ and $x_1^{-1}u_1^{-1}x_1$ are not self-adjoint in our definition. So we need to be careful about the roles of formal variables when we consider self-adjoint rational expressions. 
For the matrix-valued case, the $2 \times 2$ respectively $1 \times 1$ matrix-valued noncommutative rational expressions
$$
\begin{pmatrix}
x_1^{-1} & u_1 \\
u_1^{-1} & x_2^{-1}
\end{pmatrix}
\qquad\text{and}\qquad
\begin{pmatrix}
u_1 & x_1 + iu_2
\end{pmatrix}
\begin{pmatrix}
1 & -i u_1\\ i u_1^{-1} & x_2
\end{pmatrix}^{-1}
\begin{pmatrix}
u_1^{-1} \\ x_1 - iu_2^{-1}
\end{pmatrix}
$$
are self-adjoint of type $(2,1)$ and $(2,2)$, respectively. 
\end{exam}
 
Like in \cite[Definition 4.13]{HMS18} for the case of self-adjoint arguments, we can introduce self-adjoint formal linear representations; see also \cite[Definition A.5]{EKN20} for the scalar-valued case.

Note that to make the machinery of self-adjoint linearizations ready for further applications, we will switch from now on to a more general situation.

\begin{dfn}[Self-adjoint formal linear representation] \label{def:formal_linear_representation_selfadjoint}
Let $R$ be a $p \times p$ matrix-valued noncommutative rational expression in $d$ formal variables $x_1,\dots,x_d$. A tuple $\rho=(Q,w)$ consisting of an affine linear pencil
$$Q = A_0 \otimes 1 + \sum^d_{j=1} \big(B_j \otimes x_j + B_j^\ast \otimes x_j^\ast\big)$$
in the formal variables $x_1,\dots,x_d$ and $x_1^\ast,\dots,x_d^\ast$, with coefficients being (not necessarily self-adjoint) matrices $B_1,\dots,B_d$ in $M_{k}(\C)$ for some $k\in\N$, some self-adjoint matrix $A_0\in M_k(\C)$ and some matrix $w \in M_{k \times p}(\C)$ is called a \emph{self-adjoint formal linear representation of $R$ (of dimension $k$)} if the following condition is satisfied: for every unital complex $\ast$-algebra $\A$ and all tuples $X=(X_1,\dots,X_d)$ of (not necessarily self-adjoint) elements in $\A$, one has
$$X \in \dom_\A(R) \qquad \Longrightarrow \qquad (X,X^\ast) \in \dom_\A(Q^{-1})$$
and for every $X \in \dom_\A(R)$ for which $R(X)$ is self-adjoint, it holds true that
$$R(X) = w^\ast Q(X,X^\ast)^{-1} w.$$
\end{dfn}

We point out that in contrast to the related concept introduced in \cite[Definition 4.13]{HMS18} the existence of a self-adjoint formal linear representation in the sense of the previous Definition \ref{def:formal_linear_representation_selfadjoint} does not enforce $R$ to be self-adjoint at any distinguished points in its domain. In fact, we have the following theorem: every square matrix-valued noncommutative rational expression admits a self-adjoint formal linear representation; this is analogous to \cite[Theorem 4.14]{HMS18}.

Like for formal linear representations, we will say that a self-adjoint formal linear representation $\rho=(Q,w)$ of a self-adjoint $p \times p$ matrix-valued noncommutative rational expression $R$ is \emph{proper} if the dimension $k$ of $\rho$ is larger than $p$ and if $w$ has a full rank (i.e., the rank of $w$ is $p$).

\begin{thm}\label{sa-lin}
Every square matrix-valued noncommutative rational expression in $d$ formal variables admits a self-adjoint formal linear representation in the sense of Definition \ref{def:formal_linear_representation_selfadjoint} which is proper.
\end{thm}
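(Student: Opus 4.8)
The plan is to mimic the inductive construction used for ordinary formal linear representations in Theorem~\ref{linear}, but to work throughout with the ``doubled'' pencil in the variables $x_1,\dots,x_d$ and $x_1^\ast,\dots,x_d^\ast$, and to symmetrize at every step. First I would fix a matrix-valued rational expression $R$ and, by Theorem~\ref{linear}, choose a proper (ordinary) formal linear representation $\rho=(u,A,v)$ of $R$ of some dimension $k$, so that $R(X)=uA(X)^{-1}v$ on $\dom_\A(R)$, where $A=A_0\otimes 1+\sum_j A_j\otimes x_j$. The key algebraic identity I would exploit is the same Schur-complement trick that produces self-adjoint linearizations of self-adjoint polynomials: on the locus where $R(X)$ is self-adjoint, $R(X)=R(X)^\ast$ forces
$$
uA(X)^{-1}v = v^\ast (A(X)^\ast)^{-1} u^\ast,
$$
and hence $R(X)$ equals the Schur complement, with respect to the upper-left block, of a block $2k\times 2k$ matrix of the form
$$
\widehat{Q}(X,X^\ast)=\begin{pmatrix} 0 & \widehat{A}(X,X^\ast) \\ \widehat{A}(X,X^\ast)^\ast & 0 \end{pmatrix}
$$
after conjugating suitably; more precisely one takes the standard $2\times 2$ self-adjoint dilation $\begin{pmatrix} 0 & A(X) \\ A(X)^\ast & 0\end{pmatrix}$, whose inverse is $\begin{pmatrix} 0 & (A(X)^\ast)^{-1} \\ A(X)^{-1} & 0\end{pmatrix}$, and then compresses by the vector $\binom{u^\ast}{v}$ (reading $w^\ast=(u\ \ v^\ast)$ appropriately). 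Because $A(X)$ is invertible exactly when $X\in\dom_\A(A^{-1})\supseteq\dom_\A(R)$, the required domain inclusion $X\in\dom_\A(R)\Rightarrow (X,X^\ast)\in\dom_\A(Q^{-1})$ comes for free from the corresponding property of $\rho$, and the defining identity $R(X)=w^\ast Q(X,X^\ast)^{-1}w$ holds precisely on the self-adjointness locus, which is all that Definition~\ref{def:formal_linear_representation_selfadjoint} asks.

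With this construction in hand, the remaining work is bookkeeping: I would set $Q=A_0'\otimes 1+\sum_j(B_j\otimes x_j+B_j^\ast\otimes x_j^\ast)$ by reading off the blocks of the dilation of $A$ — the coefficients $A_j$ of $A$ become off-diagonal blocks $B_j$, the constant term $A_0$ contributes a self-adjoint block $A_0'=\begin{pmatrix}0&A_0\\A_0^\ast&0\end{pmatrix}$ — and take $w$ to be the $(2k)\times p$ matrix encoding $\binom{u^\ast}{v}$ (or its obvious analogue), after which $w^\ast=(u\ \ v^\ast)$. One then checks the two clauses of Definition~\ref{def:formal_linear_representation_selfadjoint} directly from $\rho=(u,A,v)$ being a formal linear representation of $R$ and from the Schur complement formula applied to $\widehat{Q}(X,X^\ast)$, using that on the self-adjointness locus $R(X)=uA(X)^{-1}v$ and its adjoint $v^\ast(A(X)^\ast)^{-1}u^\ast$ coincide. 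Properness is then immediate: the dimension $2k>2\max\{p,q\}\ge p$ since $k>p$ in $\rho$ (or one simply notes $k\geq p$ suffices, since $2k\geq 2p>p$), and $w$ has rank $p$ because $u$ has rank $p$ (being a block of $w^\ast$), the rank constraints transferring verbatim from the properness of $\rho$ guaranteed by Theorem~\ref{linear}.

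I expect the main obstacle to be not the existence of the dilation but the careful handling of the fact that $R$ need not be self-adjoint on all of its domain, so that the identity $R(X)=w^\ast Q(X,X^\ast)^{-1}w$ is only asserted on the self-adjointness locus; one must verify that the domain inclusion clause nonetheless holds on the entire domain of $R$, which requires separating the ``invertibility'' content (which is about $A$ alone, hence about the dilation, independent of any self-adjointness) from the ``identity'' content (which uses $R(X)=R(X)^\ast$). A secondary subtlety is matching conventions: the paper uses $R(X)=uA(X)^{-1}v$ (not $-uA(X)^{-1}v$) and writes the self-adjoint representation as $w^\ast Q(X,X^\ast)^{-1}w$, so I would need to fix signs in the dilation $\begin{pmatrix}0&A(X)\\A(X)^\ast&0\end{pmatrix}$ and in $w$ so that no spurious minus sign appears; this is the kind of routine verification one carries out once and then suppresses. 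Finally, since $R$ is only assumed to be a square matrix-valued expression (not necessarily self-adjoint of type $(d_1,d_2)$), no reduction to the self-adjoint case is available, which is exactly why the dilation — rather than a direct symmetrization of $A$ — is the right tool: it works uniformly and only ``sees'' self-adjointness of $R(X)$ at the level of the evaluation identity.
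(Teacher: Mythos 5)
Your approach is the same as the paper's: starting from a proper formal linear representation $\rho=(u,A,v)$ of $R$ supplied by Theorem \ref{linear}, you pass to the self-adjoint anti-diagonal dilation of the pencil in the doubled variables $x_1,\dots,x_d,x_1^\ast,\dots,x_d^\ast$ and compress its inverse by a column built from $u^\ast$ and $v$; the domain clause indeed follows from invertibility of $A(X)$ alone, the identity clause uses $R(X)=R(X)^\ast$ only on the self-adjointness locus, and properness transfers from $\rho$. However, your displayed formulas do not quite produce $R$. With your dilation $\begin{pmatrix}0&A(X)\\ A(X)^\ast&0\end{pmatrix}$ and $w=\begin{pmatrix}u^\ast\\ v\end{pmatrix}$ one computes $w^\ast Q(X,X^\ast)^{-1}w=u\,(A(X)^\ast)^{-1}v+v^\ast A(X)^{-1}u^\ast$: the blocks of $w$ are paired with the wrong corners of the inverse, so this is the symmetrization of $u(A(X)^\ast)^{-1}v$ rather than of $R(X)$ (already for $R=x_1$ with the standard linearization from the proof of Theorem \ref{linear} it evaluates to $0$). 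Pairing correctly --- say $Q(X,X^\ast)=\begin{pmatrix}0&A(X)^\ast\\ A(X)&0\end{pmatrix}$ with the same $w$ --- yields $uA(X)^{-1}v+v^\ast(A(X)^\ast)^{-1}u^\ast=2R(X)$ on the self-adjointness locus, so you must also insert a factor $\tfrac12$, e.g.\ take $w=\begin{pmatrix}\tfrac12 u^\ast\\ v\end{pmatrix}$, giving $\tfrac12\bigl(R(X)+R(X)^\ast\bigr)=R(X)$; this is exactly the paper's choice of $\widetilde{w}$. Note the $\tfrac12$ is not a sign convention one can suppress: a single-vector compression of the inverse of the anti-diagonal dilation is automatically the sum of the two mirror terms, and it is precisely this averaging that makes the identity hold on (and only on) the self-adjointness locus. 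With these two bookkeeping corrections your construction coincides with the paper's proof; the remaining points --- the constant coefficient $\begin{pmatrix}0&A_0^\ast\\ A_0&0\end{pmatrix}$ being self-adjoint, the coefficients of $x_j$ and $x_j^\ast$ being mutually adjoint strictly triangular blocks, the dimension $2k\geq 2p>p$, and $w$ having rank $p$ --- are as you state.
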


\begin{proof}
Let $\rho=(v,Q,w)$ be a formal linear representation of $R$ in the variables $x_1,\dots,x_{d}$ with the affine linear pencil $Q$ being of the form
$$Q = A_0 \otimes 1 + \sum^{d}_{j=1} B_j \otimes x_j.$$
We consider $\widetilde{\rho}=(\widetilde{Q},\widetilde{w})$ with the affine linear pencil
$$\widetilde{Q} = \widetilde{A}_0 \otimes 1 +  \sum^{d}_{j=1} \big(\widetilde{B}_j \otimes x_j + \widetilde{B}_j^\ast \otimes x_j^\ast\big)$$
in the variables $x_1,\dots,x_{d}$, $x_1^\ast,\dots,x_{d}^\ast$ given by
$$\widetilde{A}_0 := \begin{pmatrix} 0 & A_0^\ast\\ A_0 & 0 \end{pmatrix},\qquad \widetilde{B}_j := \begin{pmatrix} 0 & 0\\ B_j & 0 \end{pmatrix},\qquad \text{and}\qquad  \widetilde{w} := \begin{pmatrix} \frac{1}{2} v^\ast\\ w\end{pmatrix}.$$
One verifies that $\widetilde{\rho}=(\widetilde{Q},\widetilde{w})$ is a self-adjoint formal linear representation of $R$ which is moreover proper whenever $\rho$ is proper.
\end{proof}

Notice that if $R$ is a $p \times p$ matrix-valued noncommutative rational expression in $d_1+d_2$ formal variables $x_1,\dots,x_{d_1}$, $u_1,\dots,u_{d_2}$ which is self-adjoint of type $(d_1,d_2)$, then each self-adjoint formal linear representation of $R$ can be brought into the simplified form $\rho=(Q,w)$ with an affine linear pencil
$$Q = A_0 \otimes 1 + \sum^{d_1}_{j=1} A_j \otimes x_j + \sum^{d_2}_{j=1} \big(B_j \otimes u_j + B_j^\ast \otimes u_j^\ast\big)$$
in the formal variables $x_1,\dots,x_{d_1}$, $u_1,\dots,u_{d_2}$, $u_1^\ast,\dots,u_{d_2}^\ast$ with coefficients being self-adjoint matrices $A_0,A_1,\dots,A_{d_1}$ and (not necessarily self-adjoint) matrices $B_1,\dots,B_{d_2}$ in $M_{k}(\C)$ for some $k\in\N$ and some matrix $w \in M_{k \times p}(\C)$; indeed Theorem \ref{sa-lin} yields a self-adjoint formal linear representation of $R$ with an affine linear pencil in the formal variables $x_1,\ldots,x_{d_1}$, $x_1^{\ast},\ldots,x_{d_1}^*$ and $u_1,\ldots,u_{d_2}$, $u_1^{\ast},\ldots,u_{d_2}^{\ast}$, from which we obtain $Q$ of the asserted form by replacing $x_1^{\ast},\ldots,x_{d_1}^{\ast}$ by $x_1,\ldots,x_{d_1}$ and merging their coefficients. In particular, we have
$$(X,U) \in \dom_{\A}(R) \qquad \Longrightarrow \qquad (X,U,U^\ast) \in \dom_\A(Q^{-1})$$
and for every $(X,U) \in \dom_\A(R)$ it holds true that
$$R(X,U) = w^\ast Q(X,U,U^\ast)^{-1} w.$$

\begin{exam}
We return to the self-adjoint noncommutative rational expressions presented in Example \ref{ex:rational_expressions}. Let us construct a self-adjoint formal linearization of $x_1 + x_2^{-1}$. Using the algorithm from \cite{HMS18}, which we recalled in the proof of Theorem \ref{linear}, we obtain first a formal linear representation
$$ 
X_1 + X_2^{-1} =
\left(
\begin{array}{ccc}
 1 & 0 & 1 
 \end{array}
\right)
\left(
\begin{array}{ccc}
1 & -X_1 & 0  \\
0 & 1 & 0 \\
0 & 0 & X_2
\end{array} 
\right)^{-1}
\left(
\begin{array}{c}
0 \\
1\\
1
\end{array}\right).
$$
Out of the latter, we construct with the help of Theorem \ref{sa-lin} the self-adjoint formal linear representation
$$
X_1 + X_2^{-1} =
\left(
\begin{array}{cccccc}
 \frac{1}{2}&0&\frac{1}{2} &0 &1 &1 
\end{array}
\right)
\left(
\begin{array}{cccccc}
 0&0 &0 &1 & 0&0 \\
 0&0 &0 &-X_1 &1&0 \\
 0&0 &0 &0 &0 & X_2 \\
 1 & -X_1 & 0 & 0 & 0& 0 \\
 0 & 1 & 0 &0& 0& 0 \\
 0 & 0& X_2 & 0& 0& 0 
\end{array}
\right)^{-1}
\left(\begin{array}{c}
    \frac{1}{2}   \\
     0 \\
     \frac{1}{2} \\
     0 \\
     1 \\
     1
\end{array}
\right).
$$
The second example is $u_1 + u_1^{-1}$. Since we have for unitary $U_1$ in any $\ast$-algebra 
$$U_1 + U_1^{-1} =
\left(
\begin{array}{ccc}
    1 & 0 & 1 
\end{array}
\right)
\left(
\begin{array}{ccc}
    1 & -U_1 & 0 \\
    0 & 1 & 0 \\
    0 & 0 & U_2
\end{array}
\right)^{-1}
\left(
\begin{array}{c}
     0 \\
     1 \\
     1
\end{array}
\right),
$$
we have a formal self-adjoint linearization 
$$U_1+U_1^{-1}=
\left(
\begin{array}{cccccc}
 \frac{1}{2}&0&\frac{1}{2} &0 &1 &1 
\end{array}
\right)
\left(
\begin{array}{cccccc}
 0&0 &0 &1 & 0&0 \\
 0&0 &0 &-U_1^\ast &1&0 \\
 0&0 &0 &0 &0 & U_1^\ast \\
 1 & -U_1 & 0 & 0 & 0& 0 \\
 0 & 1 & 0 &0& 0& 0 \\
 0 & 0& U_1 & 0& 0& 0 
\end{array}
\right)^{-1}
\left(\begin{array}{c}
    \frac{1}{2}   \\
     0 \\
     \frac{1}{2} \\
     0 \\
     1 \\
     1
\end{array}
\right).
$$
\end{exam}

\subsection{Unbounded random variables}\label{subsec:unbounded}

In this subsection, we set $(\M,\tau)$ to be a tracial $W^{\ast}$-probability space (i.e., a von Neumann algebra $\M$ that is
endowed with a faithful normal tracial state $\tau:\M\rightarrow\C$).
The condition that $\tau$ is a trace is necessary since we are going to consider closed and densely defined operators affiliated with the von Neumann algebra $\M$.
We will call these operators unbounded operators.
In general, unbounded operators might not well-behave under either addition or composition.
However, in the case of tracial $W^{\ast}$-probability space, they form a $\ast$-algebra, denoted by $\widetilde{\M}$, which provides us a framework in which one has well-defined evaluations of rational expressions.

In a language of probability, this framework allows us to consider random variables that may not have compact support or even finite moments. 
For a normal random variable $X$ in a $W^\ast$-probability space $(\M,\tau)$, we know that $X$ has finite moments of all orders and its analytic distribution $\mu_X$ determined by the moments (i.e., the probability measure associated to $X$ by a representation theorem of Riesz) has a compact support.
For an (unbounded) operator $X$ in $\widetilde{\M}$, it may not have finite moments.
But we could still associate a probability measure to $X$ via the spectral theorem.
We refer the interested reader to \cite{MN36, Bla06} for more details on unbounded operators (which are also known as measurable operators as the noncommutative analog of measurable functions, cf. \cite{Ter81}).

Let $\Proj(\M)$ denote the set of self-adjoint projections in $\M$ and let $\widetilde{\M}_{sa}$ be the set of self-adjoint elements in $\widetilde{\M}$.
Given an element $X \in \widetilde{\M}_{\sa}$, for a Borel set $B$ on $\R$, we denote by $\1_B(X)\in\Proj(\M)$ the spectral projection of $X$ on $B$ given by the spectral theorem (see, for example, \cite{Con90}).
Then we can associate a probability measure $\mu_X$ to $X$ as follows. 

\begin{dfn}\label{def:analytic_distribution}
For $X \in \widetilde{\M}_{\sa}$, we define its \emph{analytic distribution} $\mu_X$ by
\[
\mu_X(B):=\tau(\1_B(X)),\quad \text{for all Borel sets $B\subseteq\R$}.
\]
Furthermore, we define the \emph{cumulative distribution function} of $X$ as the function $\F_{X}:\R \to [0,1]$ given by, 
\[
\F_{X}(t):=\int_{-\infty}^t 1d\mu_X(s)=\tau(\1_{(-\infty,t]}(X)).
\]
\end{dfn}

In particular, if we take $\M=L^\infty(\Omega,\mathbb{P})$ and $\tau=\mathbb{E}$ for some probability measure space $(\Omega,\mathbb{P})$, then $\widetilde{\M}$ is the $\ast$-algebra consisting of all measurable functions, i.e., classical random variables.
Moreover, the analytic distribution and cumulative distribution defined above coincide with their classical counterparts.

Recall that for a probability measure, $\mu$ on $\R$. A number $\lambda\in\R$ is called an \emph{atom} of $\mu$ if $\mu(\{\lambda\})\neq0$.
Thus for a random variable $X$ in $\widetilde{\M}_\sa$, we say that $\lambda\in\R$ is an atom for $X$ if $\lambda$ is an atom for $\mu_X$.
Moreover, we see that $X$ has an atom $\lambda\in\R$ if and only if $p_{\ker(\lambda-X)}\neq0$, where $p_{\ker(\lambda-X)}\in\Proj(\M)$ is the orthogonal projection onto the kernel of $\lambda-X$ (in the Hilbert space $L^2(\M,\tau)$).
For an atom $\lambda$ of $X$, we have
\[
\mu_X(\{\lambda\})=\tau(p_{\ker(\lambda-X)}).
\]

A closely related notion is a rank defined via the image.
That is, we define
\[
\rk(X):=\tau(p_{\overline{\im X}}),
\]
where $p_{\overline{\im X}}$ is the orthogonal projection onto the closure of the image of $X$.
The following alternative description of this rank will be needed later:
\begin{equation}\label{defrank}
    \rk(X) = \inf\{\tau(r) \mid r\in\Proj(\M),\ rX=X\} .
\end{equation}
Clearly, since $p_{\overline{\im(X)}}X=X$, we have $\inf\{\tau(r) \mid r\in\Proj(\M),rX=X\}\leq\tau(p_{\overline{\im(X)}})=\rk(X)$.
To see it is an equality, note that for any $r\in\Proj(\M)$ satisfying $rX=X$, $\im(X)\subseteq\im(r)$, which implies that $p_{\overline{\im(X)}} \leq r$.

\subsection{The quantity $\Delta$}\label{subsec:Delta}

The regularity condition that we impose in Theorem \ref{conver} on the limit of the considered random matrix model involves the quantity $\Delta$ which was introduced by Connes and Shlyakhtenko in \cite{CS05}. We briefly recall the definition. Let $(\M,\tau)$ be a tracial $W^\ast$-probability space and consider a tuple $x=(x_1,\dots,x_d)$ of (not necessarily self-adjoint) noncommutative random variables in $\M$. We denote by $\F(L^2(\M,\tau))$ the ideal of all finite rank operators on $L^2(\M,\tau)$ and by $J$ Tomita's conjugation operator, i.e., the conjugate-linear map $J: L^2(\M,\tau) \to L^2(\M,\tau)$ that extends isometrically the conjugation $x \mapsto x^\ast$ on $\M$. We then put
$$\Delta(x) := d - \dim_{\M \mathrel{\overline{\otimes}} \M^{\operatorname{op}}} \overline{\bigg\{(T_1,\dots,T_d) \in \F(L^2(\M,\tau))^d \mathrel{\bigg|} \sum^d_{j=1} [T_j, J x_j^\ast J] = 0\bigg\}}^{\operatorname{HS}},$$
where the closure is taken with respect to the Hilbert-Schmidt norm. Note that in contrast to \cite{CS05}, we do not require the set $\{x_1,\dots,x_d\}$ to be closed under the involution $\ast$; see also \cite{MSY19}. Despite this slight deviation from the setting of \cite{CS05}, the following result remains true.

\begin{thm}[Theorem 3.3 (e) in \cite{CS05}]
Let $1\leq k<d$ and suppose that the sets $\{x_1,\dots,x_k\}$ and $\{x_{k+1},\dots,x_d\}$ are freely independent, then
$$\Delta(x_1,\dots,x_d) = \Delta(x_1,\dots,x_k) + \Delta(x_{k+1},\dots,x_d).$$
\end{thm}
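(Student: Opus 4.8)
\emph{Proof idea.} Write $k=d_1$, $d-k=d_2$, set $\M_1=W^\ast(x_1,\dots,x_k)$ and $\M_2=W^\ast(x_{k+1},\dots,x_d)$, which are free in $(\M,\tau)$ by hypothesis, and for a tuple $y=(y_1,\dots,y_m)$ of elements of $\M$ put
\[
H(y):=\Big\{(T_1,\dots,T_m)\in\F(L^2(\M,\tau))^m\ \Big|\ \sum_{j=1}^m[T_j,Jy_j^\ast J]=0\Big\},
\]
so that $\Delta(y)=m-\dim_{\M\overline{\otimes}\M^{\operatorname{op}}}\overline{H(y)}^{\operatorname{HS}}$. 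Recall that the space of Hilbert--Schmidt operators on $L^2(\M,\tau)$, with the left and right $\M$-actions, is the coarse correspondence $L^2(\M)\otimes\overline{L^2(\M)}$, which has $\dim_{\M\overline{\otimes}\M^{\operatorname{op}}}$ equal to $1$; hence $0\le\dim_{\M\overline{\otimes}\M^{\operatorname{op}}}\overline{H(y)}\le m$ for every tuple. The plan is to prove the two inequalities separately, and only the lower bound on $\Delta(x_1,\dots,x_d)$ will use freeness.

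For the upper bound I would argue unconditionally: padding with zeros gives an isometric $\M\overline{\otimes}\M^{\operatorname{op}}$-linear embedding of $H(x_1,\dots,x_k)$ into $H(x_1,\dots,x_d)$ onto the first $k$ coordinate blocks, and of $H(x_{k+1},\dots,x_d)$ onto the last $d-k$ blocks; these two images are orthogonal, so $\overline{H(x_1,\dots,x_d)}\supseteq\overline{H(x_1,\dots,x_k)}\oplus\overline{H(x_{k+1},\dots,x_d)}$ inside $\F(L^2(\M,\tau))^k\oplus\F(L^2(\M,\tau))^{d-k}$. Applying $\dim_{\M\overline{\otimes}\M^{\operatorname{op}}}$ and subtracting from $d=k+(d-k)$ yields $\Delta(x_1,\dots,x_d)\le\Delta(x_1,\dots,x_k)+\Delta(x_{k+1},\dots,x_d)$ (this is just subadditivity of $\Delta$).

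For the reverse inequality I would follow the argument of \cite[Theorem 3.3(e)]{CS05}. Using that $\Delta$ is unchanged under enlarging the ambient von Neumann algebra (the relevant parts of \cite[Theorem 3.3]{CS05}), one may work inside the free product $\M_1\ast\M_2$. The reduced free-product decomposition of $L^2(\M_1\ast\M_2,\tau)$ into the constant vector and alternating words in $L^2_0(\M_1)$, $L^2_0(\M_2)$ induces an orthogonal decomposition of the module $L^2(\M)\otimes\overline{L^2(\M)}$ of Hilbert--Schmidt operators. The goal is to show $\overline{H(x_1,\dots,x_d)}\subseteq\overline{H(x_1,\dots,x_k)}\oplus\overline{H(x_{k+1},\dots,x_d)}$, which combined with the previous inclusion forces equality of submodules, hence of dimensions. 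Concretely: given finite-rank $T_1,\dots,T_d$ with $S:=\sum_{j\le k}[T_j,Jx_j^\ast J]=-\sum_{j>k}[T_j,Jx_j^\ast J]$, one has to approximate $(T_1,\dots,T_d)$ in Hilbert--Schmidt norm by tuples whose first $k$ entries are a cocycle for $(x_1,\dots,x_k)$ and whose last $d-k$ entries are a cocycle for $(x_{k+1},\dots,x_d)$; equivalently, one must produce arbitrarily small finite-rank corrections $(\varepsilon_1,\dots,\varepsilon_d)$ with $\sum_{j\le k}[\varepsilon_j,Jx_j^\ast J]=S$ and $\sum_{j>k}[\varepsilon_j,Jx_j^\ast J]=-S$ simultaneously. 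The coboundary operators strictly increase word length in the free product, so an operator that is a coboundary coming from both families is forced to be ``short'', and this is exactly what gives the room to build such corrections. Finally, I would note that nothing in this argument uses that $\{x_1,\dots,x_d\}$ is $\ast$-closed: only the fixed right-multiplication operators $Jx_j^\ast J\in J\M J$ enter, and they are handled as arbitrary elements of $\M$; hence the proof of \cite[Theorem 3.3(e)]{CS05} carries over verbatim to the present, possibly non-self-adjoint, setting.

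The hard part will be making the free-product bookkeeping in the last paragraph precise enough to see that the ``shared $S$'' constraint dissolves upon taking Hilbert--Schmidt closures; a secondary point requiring care is the reduction to $\M=\M_1\ast\M_2$, i.e.\ the invariance of $\Delta$ under enlargement of the ambient algebra.
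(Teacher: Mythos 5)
The paper gives no proof of this statement at all: it is quoted directly from Connes--Shlyakhtenko \cite{CS05} (Theorem 3.3(e)), with only the remark---pointing to \cite{MSY19}---that dropping the requirement that the tuple be $\ast$-closed does not affect its validity. Your proposal is essentially the same treatment: the zero-padding argument for the subadditivity direction is correct but elementary, the crucial superadditivity under freeness is explicitly deferred to the argument of \cite{CS05} (your word-length sketch of that argument is a heuristic, not a verified proof), and your closing observation that only the fixed operators $J x_j^\ast J$ enter the definition---so $\ast$-closedness of the tuple is irrelevant---is precisely the remark the paper makes before citing the result.
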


Further, we recall from \cite[Corollary 6.4]{MSY19} that $\Delta(u)=d$ for every $d$-tuple $u$ of freely independent Haar unitary elements in $(\M,\tau)$.

In the particular case of a $d$-tuple $x$ consisting of self-adjoint operators in $\M$, Corollary 4.6 in \cite{CS05} says that $d \geq \Delta(x) \geq \delta(x)$, where $\delta(x)$ denotes the so-called \emph{microstates free entropy dimension} which Voiculescu introduced in \cite[Definition 6.1]{Voi94}.
Now, if the $x_1,\dots,x_d$ are freely independent, then Proposition 6.4 in \cite{Voi94} tells us that
$$\delta(x) = d - \sum^d_{j=1} \sum_{t\in\R} \mu_{x_j}(\{t\})^2,$$
where $\mu_{x_j}$ is the analytic distribution of the operator $x_j$ in the sense of Definition \ref{def:analytic_distribution}. We infer that $\Delta(x_1,\dots,x_d) = d$ if $x_1,\dots,x_d$ are self-adjoint, freely independent and their individual analytic distributions $\mu_{x_1},\dots,\mu_{x_d}$ are all non-atomic. For reference, we summarize these observations by the following corollary.

\begin{cor}\label{cor:maximal_Delta}
Let $x=(x_1,\dots,x_{d_1})$ be a $d_1$-tuple of self-adjoint and freely independent elements in $(\M,\tau)$ with $\mu_{x_1},\dots,\mu_{x_{d_1}}$ being non-atomic. Further, let $u=(u_1,\dots,u_{d_2})$ be a $d_2$-tuple of freely independent Haar unitary elements in $(\M,\tau)$. Suppose that $x$ and $u$ are freely independent. Then $\Delta(x,u)=d_1+d_2$.
\end{cor}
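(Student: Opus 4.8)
The plan is to combine the additivity of $\Delta$ under free independence with the two facts recalled just above: $\Delta(u)=d_2$ for a tuple of freely independent Haar unitaries, and, for a self-adjoint tuple, the sandwich $d_1\geq\Delta(x)\geq\delta(x)$ together with Voiculescu's formula for $\delta$ of a freely independent family. First I would dispose of the degenerate cases: if $d_1=0$ the statement reduces to $\Delta(u)=d_2$, and if $d_2=0$ it reduces to the self-adjoint computation below, so I may assume $d_1,d_2\geq 1$.

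Since the sets $\{x_1,\dots,x_{d_1}\}$ and $\{u_1,\dots,u_{d_2}\}$ are freely independent by hypothesis, the additivity theorem (Theorem 3.3\,(e) of \cite{CS05}, applied with $k=d_1$ to the tuple $(x,u)$) gives
$$\Delta(x,u)=\Delta(x)+\Delta(u).$$
The second summand equals $d_2$ by \cite[Corollary 6.4]{MSY19}. For the first summand, I would use that the $x_j$ are self-adjoint, so Corollary 4.6 in \cite{CS05} yields $d_1\geq\Delta(x)\geq\delta(x)$, where $\delta$ denotes the microstates free entropy dimension. As $x_1,\dots,x_{d_1}$ are moreover freely independent, Proposition 6.4 in \cite{Voi94} computes
$$\delta(x)=d_1-\sum_{j=1}^{d_1}\sum_{t\in\R}\mu_{x_j}(\{t\})^2=d_1,$$
the last equality holding because each $\mu_{x_j}$ is non-atomic, so every inner sum vanishes. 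Hence $d_1\geq\Delta(x)\geq d_1$, i.e.\ $\Delta(x)=d_1$, and combining with the previous display gives $\Delta(x,u)=d_1+d_2$.

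Since each step is a direct invocation of a result already stated in the excerpt, there is no genuine obstacle here; the only points requiring a little care are checking that the hypotheses of the additivity theorem are exactly what is assumed (free independence of the two blocks $x$ and $u$), and noting that a single application of that theorem suffices because Voiculescu's formula handles the whole freely independent self-adjoint block $x$ at once — so no further recursive splitting inside the $x$-block is needed. One should also confirm that \cite[Proposition 6.4]{Voi94} applies verbatim to our (self-adjoint, hence in particular $\ast$-closed) freely independent tuple, which it does.
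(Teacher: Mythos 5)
Your proposal is correct and follows essentially the same route as the paper: split $\Delta(x,u)=\Delta(x)+\Delta(u)$ by the additivity theorem of Connes--Shlyakhtenko, use $\Delta(u)=d_2$ from \cite{MSY19}, and obtain $\Delta(x)=d_1$ from the sandwich $d_1\geq\Delta(x)\geq\delta(x)$ together with Voiculescu's formula for $\delta$ of a freely independent self-adjoint tuple with non-atomic distributions. Your extra remarks about the degenerate cases $d_1=0$ or $d_2=0$ and the applicability of the cited results are sensible but add nothing beyond the paper's argument.
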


\section{Evaluations of non-degenerate matrix-valued noncommutative rational expressions}\label{sec:evaluations}

By definition, every non-degenerate matrix-valued noncommutative rational expression has a non-empty domain when evaluations in matrices of sufficiently large size are considered. In this section, we show that much more is true. Namely, we establish that the assumptions of Theorem \ref{conver} are satisfied in very general situations.

\subsection{Evaluations in random matrices}

The following result asserts, roughly speaking, that one can almost surely evaluate every non-degenerated matrix-valued noncommutative rational expression in ``absolutely continuous'' random matrix models, provided that their size is large enough. The precise statement reads as follows.

\begin{thm}\label{thm:evaluation_rand_mat_abscont}
Let $R$ be a matrix-valued noncommutative rational expression in $d=d_1+d_2$ formal variables, which is non-degenerate. Suppose that $\mu_{d_1,d_2}^N$ is a probability measure on $M_N(\C)_\sa^{d_1} \times U_N(\C)^{d_2}$ which is absolutely continuous with respect to the product measure of the Lebesgue measure on $M_N(\C)_\sa$ and the Haar measure on $U_N(\C)$. If $(X^N,U^N)$ is a tuple of random matrices in $M_N(\C)_\sa^{d_1} \times U_N(\C)^{d_2}$ with law $\mu_{d_1,d_2}^N$, then there exists some $N_0\in\N$ such that almost surely $(X^N,U^N) \in \dom_{M_N(\C)}(R)$ for all $N \geq N_0$.
\end{thm}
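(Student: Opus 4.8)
\emph{Proof sketch.}
The plan is to fix $N$ beyond the threshold $N_0=N_0(R)$ supplied by Theorem~\ref{thm:matrix_domain_large_dimension} and to show that the ``bad set'' $\mathcal{M}_N\setminus\dom_{M_N(\C)}(R)$, where $\mathcal{M}_N:=M_N(\C)_\sa^{d_1}\times U_N(\C)^{d_2}$, is null for the product of Lebesgue measures on the self-adjoint slots and Haar measures on the unitary slots. Absolute continuity of $\mu_{d_1,d_2}^N$ then forces $\mathbb{P}\big((X^N,U^N)\in\dom_{M_N(\C)}(R)\big)=1$ for every such $N$, and intersecting these almost sure events over $N\ge N_0$ gives the claim.

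The first step is to check that $\dom_{M_N(\C)}(R)$ is Zariski-open in $M_N(\C)^d\cong\C^{dN^2}$. This is an induction along the recursive rules defining $\dom_\A(R)$: for $R=A\otimes 1$ and $R=A\otimes x_j$ the domain is everything; the rules for $R_1+R_2$ and $R_1\cdot R_2$ only intersect Zariski-open sets; and for $R^{-1}$ one uses that on the (inductively) Zariski-open set $\dom_{M_N(\C)}(R)$ the entries of $X\mapsto R(X)$ are regular functions — at each inversion step the inverse of an invertible matrix is the adjugate divided by the determinant — so that $\det R(X)$ is a regular function there and $\{X\in\dom_{M_N(\C)}(R)\mid\det R(X)\ne 0\}$ is Zariski-open again. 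Since $\dom_{M_N(\C)}(R)$ is non-empty for $N\ge N_0$ by Theorem~\ref{thm:matrix_domain_large_dimension}, its complement is a \emph{proper} Zariski-closed subset of $M_N(\C)^d$, hence contained in the zero locus $Z(P_N)$ of some non-zero polynomial $P_N$ in the $dN^2$ matrix entries.

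The heart of the proof is that this $P_N$ cannot vanish identically on $\mathcal{M}_N$, and indeed vanishes there only on a null set. The key point, proved one variable at a time and iterated over the $d=d_1+d_2$ slots, is that \emph{any} polynomial $Q$ in the matrix entries that vanishes on $\mathcal{M}_N$ must be the zero polynomial; here it is essential that the conditions defining the domain are genuinely complex-polynomial, with no complex conjugates involved. On a self-adjoint slot, $M_N(\C)_\sa$ is the image of the real subspace $\R^{N^2}$ under a $\C$-linear isomorphism $\C^{N^2}\to M_N(\C)$, so a polynomial vanishing on it is zero, because a polynomial in finitely many variables that vanishes on $\R^{N^2}$ is zero. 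On a unitary slot, one precomposes with the exponential map $\exp\colon M_N(\C)\to\mathrm{GL}_N(\C)$: this is a local biholomorphism at $0$, it maps the real subspace $iM_N(\C)_\sa$ of skew-Hermitian matrices into $U_N(\C)$, and it maps a neighbourhood of $0$ onto a neighbourhood of $1_N$; since $iM_N(\C)_\sa$ is again a real form of $M_N(\C)$, a holomorphic function vanishing on it near $0$ vanishes near $0$, whence $Q$ vanishes near $1_N$ and so everywhere by the identity theorem. With this in hand: $M_N(\C)_\sa\cong\R^{N^2}$ and $U_N(\C)$ are connected, so $\mathcal{M}_N$ is a connected real-analytic manifold; the restriction $P_N|_{\mathcal{M}_N}$ is real-analytic and, by the above, not identically zero, hence its zero set has measure zero for the natural smooth measure on $\mathcal{M}_N$, which is precisely the product of Lebesgue measures on the $M_N(\C)_\sa$-factors and Haar measures on the $U_N(\C)$-factors. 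As $\mathcal{M}_N\setminus\dom_{M_N(\C)}(R)\subseteq\mathcal{M}_N\cap Z(P_N)$, this set is null for that measure, hence $\mu_{d_1,d_2}^N$-null by absolute continuity.

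I expect the ``real form'' step for the unitary variables to be the main obstacle: one has to make sure no complex conjugates sneak into the domain conditions — so that restricting a non-zero polynomial to $U_N(\C)$ cannot annihilate it — and set up the local argument via $\exp$ (or, alternatively, via the Cayley transform $V\mapsto(1-V)(1+V)^{-1}$ on skew-Hermitian $V$) correctly. By comparison, verifying that $\dom_{M_N(\C)}(R)$ is Zariski-open is routine, but it must be done along the inductive definition of the domain rather than extracted from a formal linear representation $\rho=(u,A,v)$ of $R$, since the latter only bounds $\dom_{M_N(\C)}(R)$ from above via $\dom_{M_N(\C)}(R)\subseteq\dom_{M_N(\C)}(A^{-1})$.
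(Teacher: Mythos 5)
Your proof is correct, but it takes a genuinely different route from the paper on the algebraic side. The paper never handles $\dom_{M_N(\C)}(R)$ globally: it runs an induction over the structure of $R$ (the set $\mathfrak{R}_0$ in its proof), treating each inversion step through a proper formal linear representation (Theorem \ref{linear}), fullness of the bordered pencil $\tilde{Q}$ (Remark \ref{rem:linear-full} \ref{it:display_full}), almost-sure invertibility of the pencil evaluation (Proposition \ref{prop:invertibility_rand_mat_abscont}), and the Schur complement to pass to $R^{-1}$. You instead show directly that $\dom_{M_N(\C)}(R)$ is Zariski-open (by induction along the recursive definition of the domain, the entries of $R(X)$ being regular functions at each stage), non-empty for $N\geq N_0$ by Theorem \ref{thm:matrix_domain_large_dimension}, so that the bad set sits inside a single proper hypersurface $Z(P_N)$, and then you apply the measure argument once. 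The analytic core is shared: your claim that a polynomial in the complex matrix entries vanishing on $M_N(\C)_\sa^{d_1}\times U_N(\C)^{d_2}$ must vanish identically is exactly Proposition \ref{prop:identity_principle}, which the paper proves by a one-variable identity-principle argument along $X+zY$ and $\exp(-iz\log P)U$ from the polar decomposition, while you argue slot by slot via real forms ($M_N(\C)_\sa$, and $iM_N(\C)_\sa$ pushed into $U_N(\C)$ by $\exp$); both are valid, as is your real-analyticity/null-set plus absolute-continuity step, which coincides with the one in Proposition \ref{prop:invertibility_rand_mat_abscont}. What your route buys is economy for this particular theorem: no linearization, fullness, or Schur complements, and one polynomial controlling the entire bad set; your closing observation that a formal linear representation only yields the upper bound $\dom_{M_N(\C)}(R)\subseteq\dom_{M_N(\C)}(A^{-1})$ correctly identifies why the paper must recurse rather than argue in one shot. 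What the paper's route buys is uniformity: the same linearization-and-fullness machinery is reused essentially verbatim for Theorem \ref{thm:evaluation_maximalDelta}, where no Zariski argument is available in $\widetilde{\M}$ and invertibility must instead come from \cite[Theorem 5.6]{MSY19}.
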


\begin{rem}
	It is essential to work over the field of complex numbers for Theorem \ref{thm:evaluation_rand_mat_abscont} to be true. To see this, let us consider the noncommutative rational expression $r=(x_1 x_2 - x_2 x_1)^{-1}$. Note that there are real matrices $X_1,X_2$ at which one can evaluate $r$, but in $M_N(\R)$ with $N$ odd, there cannot exist symmetric real matrices $X_1,X_2$ at which the evaluation $r(X_1,X_2)$ would be defined, since necessarily $\det(X_1 X_2 - X_2 X_1)=0$. Indeed
	$$\det(X_1 X_2 - X_2 X_1) = \det\big({}^t(X_1 X_2 - X_2 X_1)\big) = - \det(X_1 X_2 - X_2 X_1).$$
	This observation is consistent with the proof of Proposition \ref{prop:identity_principle}, on which Theorem \ref{thm:evaluation_rand_mat_abscont} relies since we use complex analysis techniques.
	
	One can also find an algebraic construction of a symmetric matrix in the domain of a noncommutative rational expression in \cite[Remark 6.7]{xx9}.
\end{rem}

The proof of Theorem \ref{thm:evaluation_rand_mat_abscont} relies on a study of evaluations of affine linear pencils. The first step is the following proposition, which requires some notation. Consider an affine linear pencil
\begin{equation}\label{eq:linear_pencil}
Q = A_0 \otimes 1 + \sum^{d_1}_{j=1} A_j \otimes x_j + \sum^{d_2}_{j=1} B_j \otimes u_j
\end{equation}
in the variables $x=(x_1,\dots,x_{d_1})$ and $u=(u_1,\dots,u_{d_2})$, say with coefficients $A_0,A_1,\dots,A_{d_1}$ and $B_1,\dots,B_{d_2}$ taken from $M_k(\C)$. We regard $Q$ as an element in
$$M_k(\C) \otimes \C\langle x,u \rangle \cong M_k(\C\langle x,u \rangle).$$
Given an $d$-tuple $Z = (Z',Z'')$ of matrices in $M_N(\C)$, we consider the evaluation of $Q$ at $Z$ which is given by
$$Q(Z) := A_0 \otimes 1 + \sum^{d_1}_{j=1} A_j \otimes Z_j' + \sum^{d_2}_{j=1} B_j \otimes Z_j'',$$
where $Q(Z)$ lies in $M_k(\C) \otimes M_N(\C) \cong M_{kN}(\C)$.
Building on such evaluations, we associate $Q$ with functions
$$\phi^{(N)}_Q:\ M_N(\C)^d \longrightarrow \C,\quad Z \longmapsto \det(Q(Z))$$
for every $N\in\N$. Notice that $\phi^{(N)}_Q$ is a holomorphic commutative polynomial in the $d N^2$ complex matrix entries appearing in the tuple $Z$. This allows us to use the machinery of complex analysis to relate $\phi^{(N)}_Q$ with its restriction to the real space $M_N(\C)^{d_1}_\sa \times U_N(\C)^{d_2}$.

\begin{prop}\label{prop:identity_principle}
Let $Q$ be an affine linear pencil of the form \eqref{eq:linear_pencil} in $M_k(\C) \otimes \C\langle x,u \rangle$ and let $N\in\N$. If $\phi^{(N)}_Q|_{M_N(\C)^{d_1}_\sa \times U_N(\C)^{d_2}} \equiv 0$, then $\phi^{(N)}_Q \equiv 0$.
\end{prop}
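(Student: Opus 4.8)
The plan is to exploit the fact that $\phi^{(N)}_Q$ is a holomorphic (indeed polynomial) function of the $dN^2$ complex entries of $Z=(Z',Z'')$, and that the real submanifold $M_N(\C)^{d_1}_\sa \times U_N(\C)^{d_2}$ is, in the appropriate sense, a ``totally real'' or ``uniqueness'' set for holomorphic functions: a holomorphic function on a connected domain in $\C^m$ that vanishes on such a set must vanish identically. Concretely, I would treat the two factors separately. For the self-adjoint factor, parametrize $M_N(\C)_\sa$ by its real coordinates: a self-adjoint matrix is determined by $N$ real diagonal entries and $\binom{N}{2}$ complex off-diagonal entries subject to the Hermitian constraint $Z_{ji}=\overline{Z_{ij}}$. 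Writing each complex matrix entry $Z_{ij}$ of a general (not necessarily self-adjoint) matrix as $a_{ij}+\sqrt{-1}\,b_{ij}$, the self-adjoint slice is cut out by real-linear equations, and one checks that a polynomial in the $a_{ij},b_{ij}$ (with complex coefficients) vanishing on this real-linear subspace of full real dimension $N^2$ must be the zero polynomial — this is just the one-variable fact that a holomorphic polynomial vanishing on $\R \subseteq \C$ is zero, applied coordinate by coordinate after a linear change of variables. For the unitary factor, I would use that $U_N(\C)$ is a real-analytic submanifold of $M_N(\C)$ whose complexification is all of $M_N(\C)$ (equivalently, $U_N(\C)$ contains $N^2$ real-linearly independent tangent directions at the identity, since its Lie algebra $\mathfrak{u}(N)$ has real dimension $N^2$ and $\mathfrak{u}(N) \oplus \sqrt{-1}\,\mathfrak{u}(N) = M_N(\C)$), so $U_N(\C)$ is a uniqueness set for holomorphic functions on $M_N(\C)$.

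The cleanest way to package both cases uniformly is via the following elementary lemma: if $f:\C^m \to \C$ is holomorphic and $M \subseteq \C^m$ is a real-analytic submanifold such that at some point $z_0 \in M$ the real tangent space $T_{z_0}M$ spans $\C^m$ over $\C$ (i.e. $T_{z_0}M + \sqrt{-1}\,T_{z_0}M = \C^m$), and if $f|_M \equiv 0$, then $f$ vanishes in a neighborhood of $z_0$, hence identically by the identity theorem on the connected domain $\C^m$. Both $M_N(\C)_\sa$ (affine, so $T = M_N(\C)_\sa$ everywhere, and $M_N(\C)_\sa + \sqrt{-1}\,M_N(\C)_\sa = M_N(\C)$) and $U_N(\C)$ (with $T_{1}U_N(\C) = \mathfrak{u}(N)$ and $\mathfrak{u}(N) + \sqrt{-1}\,\mathfrak{u}(N) = M_N(\C)$) satisfy this spanning condition; taking the product gives the spanning condition for $M_N(\C)^{d_1}_\sa \times U_N(\C)^{d_2}$ inside $M_N(\C)^d \cong \C^{dN^2}$. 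To prove the lemma, restrict $f$ to the image of a local parametrization $\psi:\Omega \to M$, $\Omega \subseteq \R^{\dim M}$ open; then $f \circ \psi \equiv 0$, and differentiating in all real directions shows all partial derivatives of $f$ along $T_{z_0}M$ vanish; by $\C$-linearity of the complex derivative $df_{z_0}$ and the spanning hypothesis, $df_{z_0} = 0$, and iterating on higher derivatives (or invoking the standard totally-real uniqueness statement) gives that the full Taylor series of $f$ at $z_0$ is zero.

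The main obstacle — really the only point requiring care — is the unitary factor, since $U_N(\C)$ is compact with no interior in $M_N(\C)$, so one cannot argue by ``open subset'' as for the self-adjoint case; one genuinely needs the tangent-space/complexification argument above, or equivalently needs to know that $U_N(\C)$ is not contained in any proper complex-algebraic subvariety of $M_N(\C)$. I would verify this by the Lie-algebra computation: $\mathfrak{u}(N) \oplus \sqrt{-1}\,\mathfrak{u}(N) = \mathfrak{gl}_N(\C) = M_N(\C)$ as real vector spaces, equivalently every complex matrix is uniquely $A + \sqrt{-1}\,B$ with $A,B$ skew-Hermitian (write $M = \tfrac12(M - M^*) + \sqrt{-1}\cdot\tfrac{1}{2\sqrt{-1}}(M + M^*)$ and note both pieces are skew-Hermitian). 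Once the lemma and this spanning fact are in hand, the proposition is immediate: $\phi^{(N)}_Q$ is holomorphic on all of $\C^{dN^2}$ (a polynomial, as noted before the statement), it vanishes on the product submanifold by hypothesis, hence it vanishes identically.
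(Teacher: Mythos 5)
Your proof is correct, but it takes a genuinely different route from the paper. You reduce the statement to a several-complex-variables uniqueness principle: a holomorphic function vanishing on a submanifold $M\subseteq\C^m$ with $T_{p}M+iT_{p}M=\C^m$ vanishes identically, verified for $M_N(\C)_\sa^{d_1}\times U_N(\C)^{d_2}$ via $M_N(\C)_\sa+iM_N(\C)_\sa=M_N(\C)$ and $\mathfrak{u}(N)+i\mathfrak{u}(N)=M_N(\C)$ (your decomposition into skew-Hermitian parts is exactly right, and since $T_pM\cap iT_pM=0$ here, the slice is in fact maximally totally real). One point you should make explicit in the iteration: to get vanishing of second and higher derivatives at $z_0$ you need the first-order partials of $f$ to vanish on a neighborhood of $z_0$ \emph{in} $M$, not merely at $z_0$; this is harmless because the spanning condition is open in $p$ (and in your application it holds at every point of $M$, the tangent space to $U_N(\C)$ at $U$ being $\mathfrak{u}(N)U$), but the induction runs through points of $M$ near $z_0$, not through $z_0$ alone. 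The paper instead avoids all several-variable machinery: for an arbitrary $Z=(Z',Z'')$ with $Z''$ invertible it writes $Z_j'=X_j+iY_j$ (Cartesian decomposition into self-adjoints) and $Z_j''=P_jU_j$ (polar decomposition) and considers the single-variable entire function $z\mapsto\phi^{(N)}_Q\big(X+zY,\exp(-iz\log P_1)U_1,\dots\big)$, which vanishes on $\R$ because real $z$ produces self-adjoint and unitary arguments; the one-variable identity principle gives $\phi^{(N)}_Q(Z)=f(i)=0$, and density of such $Z$ together with continuity of the polynomial $\phi^{(N)}_Q$ finishes. The paper's curve argument buys elementarity (only the classical one-dimensional identity theorem plus explicit matrix decompositions), at the price of the extra polar-decomposition and density step; your argument buys a uniform, conceptual treatment of both factors that generalizes immediately to other ``generic'' matrix slices, at the price of proving or citing the totally-real uniqueness lemma.
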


\begin{proof}
Fix any $Z=(Z',Z'') \in M_N(\C)^{d_1} \times M_N(\C)^{d_2}$ and suppose that the $d_2$-tuple $Z''$ consists of invertible matrices.
We write $Z' = X + iY$ with the tuples $X=(X_1,\dots,X_{d_1}),Y=(Y_1,\dots,Y_{d_1}) \in M_N(\C)^{d_1}_\sa$ that are given by $X_j := \Re(Z_j')$ and $Y_j := \Im(Z_j')$ for $j=1,\dots,d_1$.
Further, for $j=1,\dots,d_2$, we consider the polar decomposition $Z_j'' = P_j U_j$ of $Z_j''$ with a positive definite matrix $P_j\in M_N(\C)$ and $U_j \in U_N(\C)$.
As the matrices $P_1,\dots,P_{d_2}$ are positive definite, we can define a holomorphic function $f: \C \to \C$ by
\begin{multline*}
f(z) := \phi^{(N)}_Q\big(X_1 + z Y_1, \dots, X_{d_1} + z Y_{d_1},\\ \exp(-i z \log(P_1)) U_1, \dots, \exp(-i z \log(P_{d_2})) U_{d_2}\big)
\end{multline*}
for $z\in\C$.
Due to the assumption that $\phi^{(N)}_Q|_{M_N(\C)^{d_1}_\sa \times U_N(\C)^{d_2}} \equiv 0$, we have that $f|_\R \equiv 0$. Thus, by the identity principle, it follows that $f$ vanishes identically on $\C$. In particular, $\phi^{(N)}_Q(Z) = f(i) = 0$. This shows that $\phi^{(N)}_Q$ vanishes on all $d$-tuples $Z=(Z',Z'') \in M_N(\C)^{d_1} \times M_N(\C)^{d_2}$ satisfying the condition that $Z''$ consists of invertible matrices. Since those are dense in $M_N(\C)^d$, the assertion follows.
\end{proof}

With the help of Proposition \ref{prop:identity_principle}, we see that fullness of affine linear pencils $Q$ can be detected by evaluations of $Q$ at points in $M_N(\C)^{d_1}_\sa \times U_N(\C)^{d_2}$.

\begin{prop}\label{prop:full_sa_evaluation}
Let $Q$ be an affine linear pencil of the form \eqref{eq:linear_pencil} in $M_k(\C) \otimes \C\langle x,u \rangle$. Then the following statements are equivalent.
\begin{enumerate}
    \item $Q$ is full.
\item there exists $N_0\in\N$ with the following property: for each $N\geq N_0$, we have that $\phi^{(N)}_Q|_{M_N(\C)^{d_1}_\sa \times U_N(\C)^{d_2}} \not\equiv 0$, i.e., one can find some $d$-tuple $(X^N,U^N) \in M_N(\C)_\sa^{d_1} \times U_N(\C)^{d_2}$ for which $Q(X^N,U^N)$ becomes invertible in $M_{kN}(\C)$.
\end{enumerate}
\end{prop}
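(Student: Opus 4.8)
The plan is to combine Proposition~\ref{prop:identity_principle} with the general dimension bound of Theorem~\ref{thm:matrix_domain_large_dimension}, the only real work being to recast fullness of $Q$ over $\C\langle x,u\rangle$ as the statement that the matrix-valued noncommutative rational expression $Q^{-1}$ is non-degenerate. Granting that for a moment, Theorem~\ref{thm:matrix_domain_large_dimension} applied to $R=Q^{-1}$ produces some $N_0\in\N$ with $\dom_{M_N(\C)}(Q^{-1})\neq\emptyset$ for all $N\geq N_0$; for each such $N$ this means there is $Z\in M_N(\C)^d$ with $Q(Z)$ invertible in $M_{kN}(\C)$, i.e.\ $\phi^{(N)}_Q(Z)\neq 0$, so $\phi^{(N)}_Q\not\equiv 0$ on all of $M_N(\C)^d$. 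The contrapositive of Proposition~\ref{prop:identity_principle} then gives $\phi^{(N)}_Q|_{M_N(\C)^{d_1}_\sa\times U_N(\C)^{d_2}}\not\equiv 0$, which is exactly the claim. Thus everything reduces to non-degeneracy of $Q^{-1}$, and that is the only place the fullness hypothesis enters.

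To see that $Q^{-1}$ is non-degenerate, first note that since $Q$ is full over $\C\langle x,u\rangle$ it is full, hence invertible, over the free field $\FF$ by Remark~\ref{rem:linear-full} (the identity of inner ranks over the polynomial ring and over the free field, together with the equivalence of fullness and invertibility over a skew field). So $Q(x,u)^{-1}\in M_k(\FF)$ exists; writing its entries as $s_{ij}=[r_{ij}]$ with non-degenerate scalar-valued noncommutative rational expressions $r_{ij}$ (possible since $\FF$ is the set of equivalence classes of such expressions) and putting $S:=(r_{ij})_{i,j}$, we get a matrix-valued noncommutative rational expression with $\dom_{M(\C)}(S)=\bigcap_{i,j}\dom_{M(\C)}(r_{ij})$. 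Since finitely many non-degenerate scalar-valued rational expressions have a common matrix point (the two-expression case quoted from \cite{K-VV12} extends to finite families), $S$ is non-degenerate, and by Remark~\ref{rem:expressions_vs_functions} one has $S(x,u)=([r_{ij}])_{i,j}=Q(x,u)^{-1}$. Now $Q\cdot S$ is a non-degenerate matrix-valued rational expression --- its $M(\C)$-domain equals $\dom_{M(\C)}(S)$ because the polynomial expression $Q$ is defined everywhere --- and $(Q\cdot S)(x,u)=Q(x,u)Q(x,u)^{-1}=I_k=(I_k\otimes 1)(x,u)$ as matrices over $\FF$. By Remark~\ref{rem:representing_rational_expression}, $Q\cdot S$ is therefore $M(\C)$-evaluation equivalent to the constant expression $I_k\otimes 1$, so $Q(Z)S(Z)=I_{kN}$ for every $N$ and every $Z\in\dom_{M_N(\C)}(S)$; as these are square matrices, $Q(Z)$ is invertible at all such $Z$. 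Hence $\dom_{M(\C)}(Q^{-1})\supseteq\dom_{M(\C)}(S)\neq\emptyset$, i.e.\ $Q^{-1}$ is non-degenerate.

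The step I expect to carry the weight is this middle one: fullness of $Q$ over $\C\langle x,u\rangle$ a priori only yields invertibility over the abstract field $\FF$, and turning that into an honest invertible complex-matrix evaluation is what needs the representing-expressions bookkeeping together with Remark~\ref{rem:representing_rational_expression} (and the extension to finite families of the common-domain statement). Everything downstream --- the dimension bound of Theorem~\ref{thm:matrix_domain_large_dimension} and the analytic continuation argument of Proposition~\ref{prop:identity_principle} --- is then applied verbatim. A purely notational point to keep track of, as in Remark~\ref{rem:expressions_vs_functions}, is the distinction between the formal variables in which the $r_{ij}$ are written and the generators of $\FF$.
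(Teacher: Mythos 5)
Your proposal is correct and follows essentially the same route as the paper: fullness of $Q$ gives invertibility over $\FF$, the inverse is represented entrywise by non-degenerate rational expressions, Theorem \ref{thm:matrix_domain_large_dimension} supplies $N_0$, Remark \ref{rem:representing_rational_expression} transports the identity $Q\cdot S = I_k$ to matrix evaluations, and Proposition \ref{prop:identity_principle} finishes the argument. The only cosmetic difference is that you package this as non-degeneracy of the expression $Q^{-1}$ before invoking the dimension bound, whereas the paper applies that bound directly to the representing expression and concludes via determinants; the mathematical content is identical.
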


\begin{proof}
If $Q$ is not full, we have a non-trivial factorization of $Q$, and its evaluation $Q(X^N,U^N)$ cannot be invertible. This implies $(ii)\implies(i)$.

First, we note that there exists some $N_0\in\N$ such that $\phi^{(N)}_Q \not\equiv 0$ for all $N\geq N_0$. This fact is well-known (see Proposition 2.4 in \cite{Vol2021}, for instance), but we include the argument for the sake of completeness. Since $Q$ is full, $Q(x)$ is invertible as a matrix over the free skew field $\C\lff x,u \rff$; see Remark \ref{rem:linear-full} \ref{it:invertible_full}. Its inverse $Q(x)^{-1} \in M_k(\C\lff x,u\rff)$ is represented by some non-degenerate $k \times k$ matrix-valued noncommutative rational expression $R$, i.e., we have $Q^{-1}(x) = R(x)$; this follows by applying Remark \ref{rem:expressions_vs_functions} entrywise. From Theorem \ref{thm:matrix_domain_large_dimension}, we know that there exists some $N_0\in\N$ such that $\dom_{M_N(\C)}(R) \neq \emptyset$ for all $N\geq N_0$.
Thanks to Remark \ref{rem:representing_rational_expression}, the identity $Q(x,u) R(x,u) = I_k$ over $\C\lff x,u\rff$ continues to hold on $\dom_{M(\C)}(R)$, and by applying determinants, we infer that $\phi^{(N)}_Q \not\equiv 0$ for all $N\geq N_0$, as desired.

Having this, Proposition \ref{prop:identity_principle} guarantees that $\phi^{(N)}_Q$ does not vanish identically on all of $M_N(\C)_\sa^{d_1} \times U_N(\C)^{d_2}$, as we wished to show.
\end{proof}

In the next step, we involve the concrete random matrix model we want to consider.

\begin{prop}\label{prop:invertibility_rand_mat_abscont}
Let $Q$ be an affine linear pencil of the form \eqref{eq:linear_pencil} in $M_k(\C) \otimes \C\langle x,u \rangle$  which is full. For $N\in\N$, let $(X^N,U^N)$ be a random matrix in $M_N(\C)_\sa^{d_1} \times U_N(\C)^{d_2}$ with an absolutely continuous law $\mu^N_{d_1,d_2}$ like in Theorem \ref{thm:evaluation_rand_mat_abscont}. Then there exists $N_0\in\N$ such that almost surely $Q(X^N,U^N)$ is invertible in $M_k(\C) \otimes M_N(\C) \cong M_{kN}(\C)$ for all $N \geq N_0$.
\end{prop}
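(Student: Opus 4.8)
The plan is to combine the previous proposition with the defining property of the absolutely continuous law. By Proposition \ref{prop:full_sa_evaluation}, fullness of $Q$ gives some $N_0\in\N$ such that for each $N\geq N_0$ the polynomial map $\phi^{(N)}_Q$ does not vanish identically on the real manifold $M_N(\C)_\sa^{d_1}\times U_N(\C)^{d_2}$; thus for every such $N$ the zero set
$$
Z_N := \{(X,U)\in M_N(\C)_\sa^{d_1}\times U_N(\C)^{d_2} \mid \phi^{(N)}_Q(X,U)=0\}
$$
is a proper subset. The first key step is to argue that $Z_N$ is a null set for the product of the Lebesgue measure on $M_N(\C)_\sa^{d_1}$ and the Haar measure on $U_N(\C)^{d_2}$. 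The second step is then immediate: since $\mu^N_{d_1,d_2}$ is absolutely continuous with respect to this product measure, $\mu^N_{d_1,d_2}(Z_N)=0$, so almost surely $\phi^{(N)}_Q(X^N,U^N)\neq 0$, i.e.\ $Q(X^N,U^N)$ is invertible in $M_{kN}(\C)$. The final step is a routine Borel--Cantelli / countable intersection argument: intersecting over all $N\geq N_0$ the almost sure events $\{\phi^{(N)}_Q(X^N,U^N)\neq 0\}$ (noting the $(X^N,U^N)$ need not be defined on a common probability space, so ``almost surely for all $N\geq N_0$'' is interpreted coordinatewise as in the statement), we obtain the claim.

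The main obstacle is the first step, namely showing $Z_N$ is null. The issue is that $\phi^{(N)}_Q$, as a polynomial in the matrix entries, is holomorphic on the complex space $M_N(\C)^d$ but we are restricting it to a real-analytic submanifold, so the classical fact ``the zero set of a nonzero polynomial is Lebesgue-null'' does not apply verbatim. The way around this is precisely the content of Proposition \ref{prop:identity_principle}: its proof exhibits, for each point $Z$ in the complex domain, a holomorphic curve through a point of $M_N(\C)_\sa^{d_1}\times U_N(\C)^{d_2}$ reaching $Z$, which shows that $\phi^{(N)}_Q$ vanishing on the real manifold forces $\phi^{(N)}_Q\equiv 0$ on all of $M_N(\C)^d$. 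Contrapositively, since $\phi^{(N)}_Q\not\equiv 0$, its restriction $\psi := \phi^{(N)}_Q|_{M_N(\C)_\sa^{d_1}\times U_N(\C)^{d_2}}$ is a not-identically-zero function which is real-analytic in the coordinates (writing each self-adjoint matrix through its real parameters and each unitary through, say, local exponential charts $U_j = \exp(iH_j)V_j$). A not-identically-zero real-analytic function on a connected real-analytic manifold has a zero set of measure zero with respect to any smooth measure; applying this on $M_N(\C)_\sa^{d_1}\times U_N(\C)^{d_2}$, which is connected (both $M_N(\C)_\sa^{d_1}$ and $U_N(\C)^{d_2}$ are connected) and carries the product of Lebesgue and Haar as a smooth measure, yields that $Z_N$ is null.

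Concretely, I would phrase the real-analyticity argument as follows. Parametrize a neighbourhood of a chosen point of $U_N(\C)^{d_2}$ where $\psi$ is nonzero (such a point exists because $\psi\not\equiv 0$, after possibly relocating: real-analytic and nonzero somewhere implies one can find a coordinate patch on which it is nonzero) by $(H_1,\dots,H_{d_2})\mapsto (\exp(iH_1)V_1,\dots,\exp(iH_{d_2})V_d)$ with $H_j\in M_N(\C)_\sa$; together with the entries of $X_1,\dots,X_{d_1}$ this gives real-analytic local coordinates in which $\psi$ becomes a real-analytic function $\tilde\psi$, not identically zero. By the Weierstrass-type theorem for real-analytic functions (or just the one-dimensional identity theorem applied along generic lines), $\{\tilde\psi=0\}$ has Lebesgue measure zero in the chart, and the pushforward of Lebesgue measure in the chart is mutually absolutely continuous with the product of Lebesgue and Haar measure (the Jacobian of the exponential chart being nonzero and Haar measure being the smooth invariant volume), so $Z_N$ has measure zero locally, hence globally by covering $M_N(\C)_\sa^{d_1}\times U_N(\C)^{d_2}$ by countably many such charts and using connectedness to propagate nonvanishing of $\tilde\psi$ to each chart. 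This completes the proof once combined with the absolute continuity of $\mu^N_{d_1,d_2}$ and the countable intersection over $N\geq N_0$.
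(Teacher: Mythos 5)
Your proof is correct and follows essentially the same route as the paper: invoke Proposition \ref{prop:full_sa_evaluation} to get non-vanishing of $\phi^{(N)}_Q$ on $M_N(\C)_\sa^{d_1}\times U_N(\C)^{d_2}$ for $N\geq N_0$, observe that in local (exponential) charts the restriction is real-analytic so its zero set is null for the product of Lebesgue and Haar measure, and conclude via absolute continuity of $\mu^N_{d_1,d_2}$ and intersecting over $N$. You merely spell out details (connectedness, the identity theorem, mutual absolute continuity of the chart measures) that the paper leaves implicit.
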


\begin{proof}
Thanks to Proposition \ref{prop:full_sa_evaluation}, since $Q$ is assumed to be full, there is an $N_0\in\N$ such that none of the functions $\phi^{(N)}_A|_{M_N(\C)^{d_1}_\sa \times U_N(\C)^{d_2}}$ for $N\geq N_0$ can vanish identically.
Notice that $M_N(\C)_\sa^{d_1} \times U_N(\C)^{d_2}$ is a real manifold of dimension $d N^2$. In suitable local charts, we see that $\phi^{(N)}_Q|_{M_N(\C)^{d_1}_\sa \times U_N(\C)^{d_2}}$ induces a real analytic function on an open subset of $\R^{d N^2}$ and can therefore vanish only on a set of Lebesgue measure $0$. Due to the choice of $\mu_{d_1,d_2}^N$, we conclude that, for each $N\geq N_0$, the random matrix $Q(X^N,U^N)$ is almost surely invertible in $M_{kN}(\C)$.
\end{proof}

\begin{proof}[Proof of Theorem \ref{thm:evaluation_rand_mat_abscont}]
We define the set $\mathfrak{R}_0$ of all non-degenerate matrix-valued noncommutative rational expressions $R$ for which the conclusion of Theorem \ref{thm:evaluation_rand_mat_abscont} is true, i.e., there exists $N_0\in\N$ such that almost surely $(X^N,U^N) \in \dom_{M_N(\C)}(R)$ for all $N \geq N_0$. We have to prove that $\mathfrak{R}_0$ consists, in fact, of all non-degenerate matrix-valued noncommutative rational expressions.

Notice that all matrix-valued noncommutative polynomial expressions obviously belong to $\mathfrak{R}_0$. Further, it is easily seen that both $R_1 + R_2$ and $R_1 \cdot R_2$ are in $\mathfrak{R}_0$ whenever we take $R_1,R_2 \in \mathfrak{R}_0$ for which the respective arithmetic operation makes sense. Therefore, it only remains to prove that if $R\in\mathfrak{R}_0$ is square and enjoys the property that $R^{-1}$ is non-degenerate, then necessarily $R^{-1}\in \mathfrak{R}_0$. To verify this, we take any square matrix-valued noncommutative rational expression $R$ belonging to $\mathfrak{R}_0$ for which $R^{-1}$ is non-degenerate. Further, let $\rho=(v,Q,w)$ be a formal linear representation of $R$ in the sense of Definition \ref{def:formal_linear_representation}, say of dimension $k$; see Theorem \ref{linear}.

By assumption, we have that $R^{-1}$ is a non-degenerate matrix-valued noncommutative rational expression. Thus, Remark \ref{rem:linear-full} \ref{it:display_full} gives us that the affine linear pencil in $d$ variables with coefficients from $M_{k+p}(\C)$ which is given by
$$\tilde{Q} := \begin{pmatrix} 0_{p \times p} & v\\ w & Q\end{pmatrix}$$
is full. Therefore, Proposition \ref{prop:invertibility_rand_mat_abscont} tells us that an $N_0\in\N$ exists such that almost surely $\tilde{Q}(X^N,U^N)$ is invertible in $M_{(k+p)N}(\C)$ for all $N \geq N_0$.
Since $R\in\mathfrak{R}_0$, we may suppose that (after enlarging $N_0$ if necessary) that at the same time almost surely $(X^N,U^N) \in \dom_{M_N(\C)}(R)$ for all $N\geq N_0$. Because $\rho$ is a formal linear representation, the latter implies that almost surely $Q(X^N,U^N)$ is invertible and $R(X^N,U^N) = v Q(X^N,U^N)^{-1} w$ for all $N\geq N_0$.
Putting these observations together, again with the help of the Schur complement formula, we see that almost surely $R(X^N,U^N)$ is invertible for all $N\geq N_0$. In other words, we have almost surely that $(X^N,U^N) \in \dom_{M_N(\C)}(R^{-1})$ for all $N\geq N_0$. The latter means that $R^{-1}\in\mathfrak{R}_0$, as desired.
\end{proof}

\subsection{Evaluation in operators with maximal $\Delta$}

It follows from \cite[Theorem 1.1]{MSY19} that for any $d$-tuple $X=(X_1,\dots,X_d)$ of (not necessarily self-adjoint) operators in some $W^\ast$-probability space $(\M,\tau)$ which satisfy the ``regularity condition'' $\Delta(X)=d$, where $\Delta$ stands for a quantity that was introduced in \cite{CS05} and which we discussed in Section \ref{subsec:Delta}, then the canonical evaluation homomorphism
$$\mathrm{ev}_X:\ \C\langle x_1,\dots,x_d\rangle \to \M$$
which is determined by $1\mapsto 1$ and $x_j \mapsto X_j$ for $j=1,\dots,d$ extends to an injective homomorphism
$$\mathrm{Ev}_X:\ \FF \to \widetilde{\M}$$
into the $\ast$-algebra $\widetilde{\M}$ of all closed and densely defined operators affiliated with $\M$; see Section \ref{subsec:unbounded}.

While the result of \cite{MSY19} addresses evaluations of noncommutative rational functions, it leaves open the question of whether also all non-degenerate rational expressions can be evaluated; indeed, this is not immediate as the domain of a rational function is larger than the domain of any of its representing non-degenerate noncommutative rational expressions. This question is answered in the affirmative by the next theorem, which gives the conclusion even in the matrix-valued case. For that purpose, we will consider the canonical amplifications
$$\mathrm{Ev}_X^{\bullet}:\ M_{\bullet}(\FF) \to M_{\bullet}(\widetilde{\M}).$$

\begin{thm}\label{thm:evaluation_maximalDelta}
Let $X=(X_1,\dots,X_d)$ be a $d$-tuple of (not necessarily self-adjoint) operators in some tracial $W^\ast$-probability space $(\M,\tau)$ satisfying $\Delta(X)=d$. Then, for every non-degenerate matrix-valued noncommutative rational expression $R$, we have that $X \in \dom_{\widetilde{\M}}(R)$ and $R(X) = \mathrm{Ev}_X^\bullet(R(x))$, where $R(x)$ is the matrix over $\FF$ associated to $R$ via Lemma \ref{lem:representing_rational_expression}.
\end{thm}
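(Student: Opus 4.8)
The plan is to mimic the recursive strategy that was used to prove Lemma \ref{lem:representing_rational_expression}, but now tracking the evaluation at the operator tuple $X$ rather than at the formal variables $x$. Concretely, I would let $\mathfrak{R}_0$ denote the set of all non-degenerate matrix-valued noncommutative rational expressions $R$ such that $X \in \dom_{\widetilde{\M}}(R)$ and $R(X) = \mathrm{Ev}_X^\bullet(R(x))$, and show that $\mathfrak{R}_0$ exhausts all non-degenerate matrix-valued noncommutative rational expressions. The base cases $R = A \otimes 1$ and $R = A \otimes x_j$ are immediate, since $\mathrm{ev}_X$ (and hence $\mathrm{Ev}_X$) sends $x_j$ to $X_j$ by definition. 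Closure under $+$ and $\cdot$ is routine because $\mathrm{Ev}_X^\bullet$ is a homomorphism and the domains intersect; one just has to check that the operator-algebraic sum and product of elements in $\widetilde{\M}$ behave correctly, which is exactly the content of $\widetilde{\M}$ being a $\ast$-algebra (Section \ref{subsec:unbounded}).

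The only genuinely nontrivial step is closure under taking inverses: given $R \in \mathfrak{R}_0$ of size $p\times p$ with $R^{-1}$ non-degenerate, I must show $X \in \dom_{\widetilde{\M}}(R^{-1})$, i.e.\ that $R(X)$ is invertible in $M_p(\widetilde{\M})$, and that its inverse equals $\mathrm{Ev}_X^\bullet(R^{-1}(x)) = \mathrm{Ev}_X^\bullet(R(x)^{-1})$. Here I would invoke a proper formal linear representation $\rho=(u,A,v)$ of $R$ from Theorem \ref{linear}, of dimension $k$, and form the associated pencil $\tilde A = \begin{pmatrix} 0_{p\times p} & u \\ v & A \end{pmatrix}$. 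By Remark \ref{rem:linear-full} \ref{it:display_full}, both $A$ and $\tilde A$ are full matrices over $\C\langle x\rangle$, hence invertible over the free field $\FF$. The key point is then: because $\Delta(X)=d$, the homomorphism $\mathrm{Ev}_X^\bullet$ is \emph{injective} on $M_\bullet(\FF)$, and injective $\ast$-algebra homomorphisms into $\widetilde{\M}$ send elements invertible in $M_k(\FF)$ to elements invertible in $M_k(\widetilde{\M})$ — this is precisely where the unbounded-operator framework and the regularity hypothesis do the work, since in an arbitrary $\ast$-algebra injectivity would not suffice, but $\widetilde{\M}$ is the $\ast$-algebra of affiliated operators of a $II_1$ factor (more precisely, a finite von Neumann algebra), where the image of a unit in a subfield under an injective map is invertible. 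Concretely: $A(x) \in M_k(\FF)$ is invertible, so $A(X) = \mathrm{Ev}_X^k(A(x))$ is invertible in $M_k(\widetilde{\M})$; since $X \in \dom_{\widetilde{\M}}(R)$ and $\rho$ is a formal linear representation, $R(X) = u\, A(X)^{-1} v$; similarly $\tilde A(X)$ is invertible in $M_{k+p}(\widetilde{\M})$, and applying the Schur complement formula to $\tilde A(X)$ with respect to its lower-right block $A(X)$ shows that the Schur complement $0 - u A(X)^{-1} v = -R(X)$ is invertible in $M_p(\widetilde{\M})$. Hence $R(X)$ is invertible, so $X \in \dom_{\widetilde{\M}}(R^{-1})$, and $R^{-1}(X) = R(X)^{-1} = \mathrm{Ev}_X^p(R(x))^{-1} = \mathrm{Ev}_X^p(R(x)^{-1}) = \mathrm{Ev}_X^p(R^{-1}(x))$, using that $\mathrm{Ev}_X^p$ is a homomorphism and $R^{-1}(x) = R(x)^{-1}$ by Lemma \ref{lem:representing_rational_expression} and Remark \ref{rem:representing_rational_expression}. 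This shows $R^{-1} \in \mathfrak{R}_0$ and closes the induction.

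The main obstacle, and the step I would be most careful about, is the precise justification that invertibility in $M_k(\FF)$ transfers to invertibility in $M_k(\widetilde{\M})$ under $\mathrm{Ev}_X^\bullet$. The abstract homomorphism $\mathrm{Ev}_X$ from \cite{MSY19} is only asserted to be injective; injectivity alone does not make the image of a field-invertible element invertible in a general overring. What makes it work is that $\widetilde{\M}$ is a division-closed (in fact, its structure as the $\ast$-algebra of affiliated operators of a finite von Neumann algebra means every element is either a zero-divisor in a controlled sense or invertible, and more to the point, the image of an embedded copy of the free field consists of elements whose "rank" in the sense of Section \ref{subsec:unbounded} is full). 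I would either cite the relevant part of \cite{MSY19} directly — where $\mathrm{Ev}_X$ is constructed precisely so that division in $\FF$ corresponds to division in $\widetilde{\M}$ — or argue via $\Delta(X)=d$ that $\mathrm{Ev}_X$ extends the evaluation to all of $\FF$ into $\widetilde{\M}$ as a homomorphism \emph{of fields into a division-closed subalgebra}, so that inverses are automatically preserved. Once that transfer principle is in hand, everything else is the bookkeeping of the Schur complement formula and the homomorphism property, exactly parallel to the proof of Lemma \ref{lem:representing_rational_expression}.
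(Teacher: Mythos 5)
Your proposal is correct and follows essentially the same route as the paper: the recursive argument over the class $\mathfrak{R}_0$, a (proper) formal linear representation $\rho=(u,A,v)$, fullness of the associated pencil $\tilde{A}$ via Remark \ref{rem:linear-full} \ref{it:display_full}, and the Schur complement formula to invert $R(X)$; the only difference is that the paper obtains invertibility of $\tilde{A}(X)$ in $M_{k+p}(\widetilde{\M})$ by citing \cite[Theorem 5.6]{MSY19} directly (full pencils evaluate to invertible affiliated operators when $\Delta(X)=d$), whereas you route through the embedding $\mathrm{Ev}_X$ of \cite[Theorem 1.1]{MSY19}. On your ``main obstacle'': the worry is unfounded, since any unital ring homomorphism sends invertibles to invertibles (injectivity and division-closedness play no role there); the genuine content supplied by $\Delta(X)=d$ is merely that $\mathrm{Ev}_X$ exists as a homomorphism on all of $\FF$, after which the transfer of invertibility is automatic and your argument is essentially equivalent to the paper's citation.
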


\begin{proof}
The proof is similar to the proof of Theorem \ref{thm:evaluation_rand_mat_abscont}. Here, we consider the set $\mathfrak{R}_0$ of all non-degenerate matrix-valued noncommutative rational expressions $r$ for which the conclusion of Theorem \ref{thm:evaluation_maximalDelta} is true, i.e., we have $X \in \dom_{\widetilde{\M}}(R)$ and $R(X) = \mathrm{Ev}_X^\bullet(R(x))$. We want to show that $\mathfrak{R}_0$ consists of all non-degenerate matrix-valued noncommutative rational expressions. This can be done in almost the same way as in Theorem \ref{thm:evaluation_rand_mat_abscont}, except for some slight modifications in the last step. Suppose that $R\in\mathfrak{R}_0$ is of size $p\times p$ and has the property that $R^{-1}$ is non-degenerate. Consider a formal linear representation $\rho=(u,A,v)$ of $r$, say of dimension $k$. Like in the proof of Theorem \ref{thm:evaluation_rand_mat_abscont}, we deduce from Remark \ref{rem:linear-full} \ref{it:display_full} that the associated affine linear pencil
$$\tilde{A} := \begin{pmatrix} 0 & u\\ v & A\end{pmatrix}$$
is full. Now, by applying \cite[Theorem 5.6]{MSY19} instead of Proposition \ref{prop:invertibility_rand_mat_abscont}, we get that $\tilde{A}(X)$ is invertible. Having this, we can proceed again like in the proof of Theorem \ref{thm:evaluation_rand_mat_abscont}, and we arrive at $X \in \dom_{\widetilde{\M}}(R^{-1})$. Moreover, since $R(X) = \mathrm{Ev}_X^\bullet(R(x))$ by the assumption $R\in\mathfrak{R}_0$, we further get that $R^{-1}(X) = R(X)^{-1} = \mathrm{Ev}_X^\bullet(R(x))^{-1} = \mathrm{Ev}_X^\bullet(R(x)^{-1}) = \mathrm{Ev}_X^\bullet(R^{-1}(x))$; notice that $R(x)$ is invertible because Lemma \ref{lem:representing_rational_expression} guarantees that $x \in \dom_{\C\lff x\rff}(R^{-1})$ as $R^{-1}$ was assumed to be non-degenerate. In summary, we see that $R^{-1} \in \mathfrak{R}_0$.
\end{proof}

\section{Convergence in law of the spectral measure}\label{sec:main}

\subsection{Estimate on the cumulative distribution function of the spectral measure of self-adjoint operators} 

In this subsection, we list and prove a few properties that we need in the following subsection to prove Theorem \ref{conver} about the convergence of the empirical measure of a self-adjoint non-degenerate matrix-valued noncommutative rational expression evaluated in matrices towards the analytic distribution of the limiting operator. 

\begin{lm}[Lemma 3.2 in \cite{BV93}]\label{egal}
For $X \in \widetilde{\M}_{\mathrm{sa}}$ and $t \in \R$ we have
$$\F_X(t) = \mathrm{max}\{\tau(p) \mid  p \in \Proj(\M), \ p (t-X) p \geq 0 \}.$$
\end{lm}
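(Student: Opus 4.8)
The plan is to set $Y := t - X \in \widetilde{\M}_{\sa}$ and $q := \1_{(-\infty,t]}(X) = \1_{[0,\infty)}(Y) \in \Proj(\M)$, so that $\F_X(t) = \tau(q)$, and to prove the chain
$$\tau(q) \;\leq\; \sup\{\tau(p) \mid p \in \Proj(\M),\ p(t-X)p \geq 0\} \;\leq\; \tau(q).$$
Throughout I would argue inside the $\ast$-algebra $\widetilde{\M}$ of Section \ref{subsec:unbounded}: a self-adjoint $a \in \widetilde{\M}$ satisfies $a \geq 0$ exactly when $a = b^\ast b$ for some $b \in \widetilde{\M}$ (equivalently $\1_{(-\infty,0)}(a) = 0$), and in that case $r a r = (br)^\ast (br) \geq 0$ for every projection $r$. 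This algebraic viewpoint is the key device; see the last paragraph.

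The first (easy) step also shows that the supremum is attained. Decompose $Y = Y_+ - Y_-$ into its positive and negative parts, so that $Y_+ \geq 0$ has support projection $\leq q$ and $Y_- \geq 0$ has support projection $\leq 1-q$. Then $q Y_+ = Y_+$ and $q Y_- = 0$, whence $q Y q = q Y_+ q - q Y_- q = Y_+ \geq 0$. Thus $q$ is an admissible projection with value $\tau(q) = \F_X(t)$, which gives the first inequality.

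The core step is the second inequality: given $p \in \Proj(\M)$ with $pYp \geq 0$, show $\tau(p) \leq \tau(q)$. The heart of the matter is that $p \wedge (1-q) = 0$. Assume not, and set $r := p \wedge (1-q) \neq 0$. On one hand $r \leq p$ yields $rYr = r(pYp)r \geq 0$ by compression of a positive element. On the other hand $r \leq 1-q = \1_{(-\infty,0)}(Y)$ gives $Yr = -Y_- r$, hence $rYr = -(Y_-^{1/2} r)^\ast (Y_-^{1/2} r) \leq 0$. Therefore $rYr = 0$, so $Y_-^{1/2} r = 0$, i.e. $r \leq \ker Y_- = \operatorname{ran}(q)$ (the last equality being the standard fact that $\1_{[0,\infty)}(Y)$ absorbs $\ker Y$); combined with $r \leq 1-q$ this forces $r = 0$, a contradiction. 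Now, having $p \wedge (1-q) = 0$, the Kaplansky parallelogram law $p - p\wedge(1-q) \sim p\vee(1-q) - (1-q)$ in $\M$, together with faithfulness and traciality of $\tau$, gives
$$\tau(p) = \tau\big(p \vee (1-q)\big) - \tau(1-q) \leq 1 - \tau(1-q) = \tau(q) = \F_X(t).$$
Taking the supremum over all admissible $p$ and combining with the first step yields the claimed identity, with the maximum attained at $q$.

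The main obstacle — and the reason I would run the whole argument through $\widetilde{\M}$ rather than through test vectors in $L^2(\M,\tau)$ — is the unboundedness of $X$: the subspace $\operatorname{ran}(p) \cap \dom(X)$ need not be dense in $\operatorname{ran}(p)$, so a naïve "quadratic form on vectors" reading of the hypothesis $p(t-X)p \geq 0$ is delicate, and one cannot in general produce a vector in $\operatorname{ran}(p\wedge(1-q)) \cap \dom(X)$ witnessing negativity. The algebraic route avoids this entirely; the only external inputs it needs are that $\widetilde{\M}$ is a $\ast$-algebra carrying Borel functional calculus for self-adjoint elements (where the tracial/finiteness hypothesis enters), that $\tau$ is faithful so $\tau(e)=0 \Leftrightarrow e=0$, and the parallelogram identity for projections in $\M$.
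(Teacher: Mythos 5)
Your argument is correct. Note that the paper itself gives no proof of Lemma \ref{egal} --- it is quoted from \cite{BV93} --- so there is nothing internal to compare against; your proof is essentially the standard Bercovici--Voiculescu argument, organized slightly differently. The attainment step (the spectral projection $q=\1_{[0,\infty)}(t-X)$ satisfies $qYq=Y_+\geq 0$) is the usual one, and your maximality step replaces the usual counting argument (``if $\tau(p)>\tau(q)$ then $\tau(p)+\tau(1-q)>1$, hence $p\wedge(1-q)\neq 0$, and on that subspace $t-X$ is strictly negative'') by the logically equivalent route of proving $p\wedge(1-q)=0$ outright and then invoking the Kaplansky parallelogram law $p\sim p\vee(1-q)-(1-q)$ to get $\tau(p)\leq\tau(q)$; both rest on the same key computation that $r\leq p$ forces $rYr\geq 0$ while $r\leq 1-q$ forces $rYr=-rY_-r\leq 0$, whence $Y_-^{1/2}r=0$ and $r\leq\1_{\ker}(Y_-)=q$. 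Your insistence on working algebraically in $\widetilde{\M}$ (positivity as $b^\ast b$, compressions, functional calculus) is a clean way to sidestep domain issues, though the obstacle you cite is not actually present here: in a tracial $W^\ast$-probability space $\operatorname{ran}(p)\cap\dom(X)$ \emph{is} automatically dense in $\operatorname{ran}(p)$ (the projections $p\wedge\1_{[-n,n]}(X)$ increase to $p$ since $\tau(1-\1_{[-n,n]}(X))\to 0$), so a quadratic-form version of the argument would also go through; this is a side remark and does not affect the validity of your proof.
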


The crux of the proof of Theorem \ref{conver} lies in the following two lemmas.

\begin{lm}
\label{rang}
	Let $X,Y\in  \widetilde{\M}_{\mathrm{sa}}$, then
	$$ \sup_{t\in\R} |\F_X(t) - \F_{X+Y}(t)| \leq \rk(Y).$$
\end{lm}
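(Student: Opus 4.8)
The strategy is to reduce the statement to a comparison of spectral projections via Lemma~\ref{egal}. Fix $t\in\R$, and set $r := \rk(Y)$. By \eqref{defrank}, for any $\varepsilon>0$ there is a projection $q\in\Proj(\M)$ with $qY=Y$ and $\tau(q)\le r+\varepsilon$; equivalently $Y(1-q)=0$ (using $Y^\ast=Y$), so $(1-q)$ commutes appropriately and $(1-q)(X+Y)(1-q)=(1-q)X(1-q)$. The idea is that on the ``corner'' cut out by $1-q$, the perturbation $Y$ is invisible, so a projection witnessing $\F_X(t)$ can be trimmed by $q$ to produce an admissible projection for $X+Y$ of trace only $\tau(q)\le r+\varepsilon$ smaller, and symmetrically.

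\textbf{Key steps.} First, by Lemma~\ref{egal} choose $p\in\Proj(\M)$ with $p(t-X)p\ge 0$ and $\tau(p)=\F_X(t)$. Consider $p' := (1-q)\wedge p$ (the projection onto the intersection of the ranges), or more simply argue with $p$ restricted to the range of $1-q$; by the standard inequality for projections in a finite von Neumann algebra, $\tau(p') \ge \tau(p) + \tau(1-q) - 1 = \tau(p) - \tau(q) \ge \F_X(t) - r - \varepsilon$. Second, since $p' \le 1-q$ we have $p'(X+Y)p' = p'(1-q)(X+Y)(1-q)p' = p'(1-q)X(1-q)p' = p'Xp'$, and since $p'\le p$ we get $p'(t-X)p' = p'\,p(t-X)p\,p' \ge 0$, hence $p'(t-(X+Y))p'\ge 0$. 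Third, Lemma~\ref{egal} applied to $X+Y$ gives $\F_{X+Y}(t) \ge \tau(p') \ge \F_X(t) - r - \varepsilon$; letting $\varepsilon\to 0$ yields $\F_X(t) - \F_{X+Y}(t) \le r$. Fourth, the reverse inequality $\F_{X+Y}(t) - \F_X(t) \le r$ follows by the same argument with the roles of $X$ and $X+Y$ exchanged, since $X = (X+Y) + (-Y)$ and $\rk(-Y)=\rk(Y)=r$. Taking the supremum over $t$ finishes the proof.

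\textbf{Main obstacle.} The only delicate point is justifying the trace estimate $\tau\big((1-q)\wedge p\big) \ge \tau(p) - \tau(q)$ and, relatedly, making sure that $p' := (1-q)\wedge p$ genuinely satisfies both $p'\le p$ and $p'\le 1-q$ so that the two algebraic simplifications above go through simultaneously; this is exactly the Kaplansky-type formula $\tau(e\wedge f) \ge \tau(e)+\tau(f)-1$ valid in a finite von Neumann algebra (equivalently $\tau(e\vee f) \le \tau(e)+\tau(f)$ together with $e\vee f \le 1$), which I would invoke as a standard fact. One should also be slightly careful that $q$ can be chosen as a genuine projection with $qY=Y$ rather than only approximately — but this is precisely the content of \eqref{defrank}, taking $q$ within $\varepsilon$ of the infimum, so the $\varepsilon$ is harmless after the limit. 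Everything else is routine manipulation with the order structure on projections.
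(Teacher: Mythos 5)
Your proposal is correct and follows essentially the same route as the paper: both use Lemma~\ref{egal} to pick a witnessing projection, cut it down by the meet with the complement of a projection $q$ satisfying $qY=Y$, apply the Kaplansky-type trace inequality $\tau(e\wedge f)\ge\tau(e)+\tau(f)-1$, and then run the symmetric argument for the reverse inequality. The only cosmetic difference is that the paper takes a supremum/infimum over all admissible projections instead of your $\varepsilon$-approximation (the infimum in \eqref{defrank} is in fact attained by $p_{\overline{\im Y}}$), which changes nothing of substance.
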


\begin{proof}
	We fix $t\in\R$. Let $r\in\Proj(\M)$ be such that $rY=Y$ and $q\in\Proj(\M)$ such that $q(t-X)q \geq 0$. Then if we set $p = q\wedge(1-r)$, we have $pY = 0$ and $pq =p$, thus
	$$ p(t-X-Y)p = p(t-X)p = pq(t-X)qp \geq 0 .$$
		
	\noindent Consequently,
	$$ \F_{X+Y}(t)\geq \tau(p) \geq \tau(q) - \tau(r) .$$
	By taking the supremum over $q$ and the infimum over $r$, we get that
	$$ \F_{X+Y}(t)\geq \F_{X}(t) - \rk(Y) .$$
	
	Now let's assume that $q$ is such that $q(t-X-Y)q \geq 0$, then similarly with $p = q\wedge(1-r)$,
	$$ p(t-X)p = p(t-X-Y)p = p q (t-X-Y)p q \geq 0 .$$
	Hence 
	$$ \F_X(t) \geq \tau(p) \geq \tau(q) - \tau(r) .$$
	And once again, by taking the supremum over $q$ and the infimum over $r$, we get that
	$$ \F_X(t) \geq \F_{X+Y}(t) - \rk(Y) .$$
	Hence the conclusion.
\end{proof}

The authors are indebted to Mikael de la Salle for indicating to them the following lemma.

\begin{lm}
\label{decr}
	Let $p\in \Proj(\M)$, $X\in\widetilde{\M}_{sa}$, then $ \rk(pXp) \leq \rk(X) $.
\end{lm}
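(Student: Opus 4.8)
The plan is to reduce the inequality $\rk(pXp) \leq \rk(X)$ to a statement about projections using the alternative description \eqref{defrank} of $\rk$, namely $\rk(Y) = \inf\{\tau(r) \mid r \in \Proj(\M),\ rY = Y\}$. First I would take any projection $r \in \Proj(\M)$ satisfying $rX = X$; since $X$ is self-adjoint, this is equivalent to $Xr = X$ as well (by taking adjoints), so $r$ dominates the projection onto $\overline{\im X}$. The goal is then to produce from $r$ a projection $s$ with $\tau(s) \leq \tau(r)$ and $s(pXp) = pXp$, which would give $\rk(pXp) \leq \tau(s) \leq \tau(r)$, and taking the infimum over $r$ yields the claim.

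The natural candidate is $s := p r p$ — but this is not a projection in general. Instead I would use the standard fact from the theory of finite von Neumann algebras (Kaplansky's formula / parallelogram law for projections): for projections $p$ and $1-r$, the projections $p \wedge (1-r)^{\perp}$-type expressions behave well under $\tau$. Concretely, set $s$ to be the orthogonal projection onto $\overline{p\, \im(r)}$, equivalently onto $\overline{\im(prp)}$ viewed appropriately; alternatively, and more cleanly, work with the projection $p - (p \wedge (1-r))$. One checks that $p \wedge (1-r)$ kills $pXp$ on the left: if $q := p \wedge (1-r)$ then $q \leq 1-r$, so $qX = q r' X$ where... more directly $q(1-r) = q$ hence $qr = 0$, and since $rX = X$ we get $qX = qrX = 0$, therefore $q(pXp) = (qp)Xp = q X p = 0$ using $qp = q$. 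Thus the projection $s := p - q$ satisfies $s(pXp) = (p - q)(pXp) = pXp - 0 = pXp$, so $s$ is a valid competitor.

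It then remains to estimate $\tau(s) = \tau(p) - \tau(p \wedge (1-r))$. Here I invoke the Kaplansky parallelogram law in the finite von Neumann algebra $\M$: $p \vee (1-r) - (1-r) \sim p - (p \wedge (1-r))$ (Murray--von Neumann equivalence), hence $\tau(p) - \tau(p \wedge (1-r)) = \tau(p \vee (1-r)) - \tau(1-r) = \tau(p \vee (1-r)) - (1 - \tau(r)) \leq 1 - (1-\tau(r)) = \tau(r)$, where the inequality uses $\tau(p \vee (1-r)) \leq 1$. This gives $\tau(s) \leq \tau(r)$, completing the argument after taking the infimum over all such $r$.

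The main obstacle is purely bookkeeping: being careful that $s = p - q$ is genuinely a projection (which it is, since $q \leq p$ implies $p - q \in \Proj(\M)$) and that the Kaplansky/parallelogram identity is applied with the correct projections. One should also double-check the edge case $X = 0$ (where $\rk(0) = 0$ and the inequality is trivial) and note that self-adjointness of $X$ is used precisely to ensure $rX = X \iff Xr = X$, which is what makes $q X = qrX = 0$ legitimate; without it one would only control one side. No genuinely hard analysis is involved — the content is the parallelogram law for projections in a finite von Neumann algebra, which is classical.
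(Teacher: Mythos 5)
Your proof is correct and follows essentially the same route as the paper's: using the description \eqref{defrank}, you take a projection $r$ with $rX=X$, observe that $q:=p\wedge(1-r)$ annihilates $pXp$ on the left so that $p-q$ is a competitor, and bound $\tau(p-q)\leq\tau(r)$ via the parallelogram law (a step the paper states without spelling out). Your side remark that self-adjointness is what makes $qX=qrX=0$ work is not quite right—that computation only uses $rX=X$—but this does not affect the validity of the argument.
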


\begin{proof}
	Let $q\in\Proj(\M)$ be such that $qX=X$, $r = p\wedge (1-q)$, then $r + 1-p$ is such that 
	$$ (r + 1-p) pXp  = rpXp = rXp = rqXp = 0 .$$
	Consequently, $ (p-r) pXp = pXp $. And since $p\geq r$, $p-r$ is a self-adjoint projection, hence
	$$ \rk(pXp) \leq \tau(p-r) \leq \tau(q) .$$
	Hence the conclusion is obtained by taking the infimum over $q$.
\end{proof}

\subsection{Main result}

This subsection focuses on proving the convergence in law of the empirical measure of matrix-valued noncommutative rational expressions evaluated in matrices under some assumptions. Theorem \ref{conver} is for deterministic matrices, but it can easily be extended to random matrices by applying this result almost surely. 

\begin{thm}\label{conver}
Let $X^N=(X_1^N,\dots,X_{d_1}^N)$ be a $d_1$-tuple of deterministic self-adjoint matrices and let $U^N=(U_1^N,\dots,U_{d_2}^N)$ be a $d_2$-tuple of deterministic unitary matrices. Further, let $R$ be a non-degenerate square matrix-valued noncommutative rational expression in $d=d_1+d_2$ variables which is self-adjoint of type $(d_1,d_2)$ in the sense of Definition \ref{def:rational_expression_selfadjoint}. Suppose that the following conditions are satisfied:
	\begin{enumerate}
	    \item $(X^N,U^N)$ converges in $\ast$-distribution towards a $d$-tuple of noncommutative random variables $(x,u)$ in some tracial $W^\ast$-probability space $(\M,\tau)$ satisfying $\Delta(x,u)=d$.
	    \item\label{it:conver_evaluation} For $N$ large enough $R(X^N,U^N)$ is well-defined, i.e., there exists $N_0\in\N$ such that $(X^N,U^N) \in \dom_{M_N(\C)}(R)$ for all $N\geq N_0$.
	\end{enumerate}
	Then $(x,u) \in \dom_{\widetilde{\M}}(R)$, so that $R(x,u)$ is well-defined, and the empirical measure of $R(X^N,U^N)$ converges in law towards the analytic distribution of $R(x,u)$.
\end{thm}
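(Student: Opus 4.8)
The plan is to linearize, then transport the statement across the convergence in $\ast$-distribution while tracking cumulative distribution functions rather than moments (the limit operator need not have finite moments). Write $H_N:=R(X^N,U^N)\in M_{pN}(\C)_\sa$ and $H:=R(x,u)$. Since $\Delta(x,u)=d$ and $R$ is non-degenerate, Theorem~\ref{thm:evaluation_maximalDelta} gives $(x,u)\in\dom_{\widetilde{\M}}(R)$, so $H\in M_p(\widetilde{\M})$ is well-defined, and $H$ is self-adjoint because $R$ is self-adjoint of type $(d_1,d_2)$ while $x$, $u$ are self-adjoint respectively unitary; let $\mu:=\mu_H$ be its analytic distribution with respect to $\mathrm{tr}_p\otimes\tau$. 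By Theorem~\ref{sa-lin} fix a proper self-adjoint formal linear representation $\rho=(Q,w)$ of $R$, which (as $R$ is self-adjoint of type $(d_1,d_2)$) we take in the simplified form with $Q$ an affine pencil in $x_1,\dots,x_{d_1},u_1,\dots,u_{d_2},u_1^\ast,\dots,u_{d_2}^\ast$, so that $R(Y,V)=w^\ast Q(Y,V,V^\ast)^{-1}w$ for every $(Y,V)\in\dom_{\A}(R)$ in every unital $\ast$-algebra $\A$. Taking $\A=\widetilde{\M}$ yields that $Q_\infty:=Q(x,u,u^\ast)\in M_k(\M)$ is invertible in $M_k(\widetilde{\M})$ with $H=w^\ast Q_\infty^{-1}w$, while $\A=M_N(\C)$ together with condition~\ref{it:conver_evaluation} gives, for $N\geq N_0$, that $Q_N:=Q(X^N,U^N,(U^N)^\ast)$ is invertible with $H_N=w^\ast Q_N^{-1}w$.

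The heart of the argument is a bookkeeping identity linking $\F_{H_N}$ (resp.\ $\F_{H}$) to the spectral data of the self-adjoint pencils
$$\widehat{T}_N(t):=\begin{pmatrix} t\,I_{pN} & w^\ast\otimes I_N\\ w\otimes I_N & Q_N\end{pmatrix},\qquad \widehat{T}_\infty(t):=\begin{pmatrix} t\,1_p & w^\ast\\ w & Q_\infty\end{pmatrix},\qquad t\in\R.$$
Since $Q_N$ is invertible, the Schur complement factorization of $\widehat{T}_N(t)$ and Sylvester's law of inertia for matrices give, after normalizing by $(k+p)N$,
$$\F_{H_N}(t)=1-\frac{k+p}{p}\,\mu_{\widehat{T}_N(t)}\big((-\infty,0)\big)+\frac{k}{p}\,\mu_{Q_N}\big((-\infty,0)\big)\qquad(N\geq N_0).$$
The main technical point is the operator analogue
$$\F_{H}(t)=1-\frac{k+p}{p}\,\mu_{\widehat{T}_\infty(t)}\big((-\infty,0)\big)+\frac{k}{p}\,\mu_{Q_\infty}\big((-\infty,0)\big),$$
valid whenever $0$ is not an atom of $\mu_{\widehat{T}_\infty(t)}$. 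One cannot simply repeat the Schur complement argument, because $Q_\infty$, although bounded and invertible in $M_k(\widetilde{\M})$, may have an unbounded inverse, so $H$ is unbounded and the congruence diagonalizing $\widehat{T}_\infty(t)$ is unbounded. To handle this, fix $\varepsilon>0$ and replace $Q_\infty$ by the boundedly invertible $Q_\infty^{(\varepsilon)}:=Q_\infty\mathbf{1}_{|\lambda|\geq\varepsilon}(Q_\infty)+\mathbf{1}_{|\lambda|<\varepsilon}(Q_\infty)$, with $H^{(\varepsilon)}:=w^\ast(Q_\infty^{(\varepsilon)})^{-1}w\in M_p(\M)$. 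Then $H-H^{(\varepsilon)}=w^\ast\big(Q_\infty^{-1}-(Q_\infty^{(\varepsilon)})^{-1}\big)w$ is supported on $\mathbf{1}_{(-\varepsilon,\varepsilon)}(Q_\infty)$, hence has rank at most $\tfrac{k}{p}\,\mu_{Q_\infty}\big([-\varepsilon,\varepsilon]\big)$ by Lemma~\ref{decr}, so Lemma~\ref{rang} gives $\sup_t|\F_H(t)-\F_{H^{(\varepsilon)}}(t)|\leq\tfrac{k}{p}\,\mu_{Q_\infty}\big([-\varepsilon,\varepsilon]\big)$. For the boundedly invertible $Q_\infty^{(\varepsilon)}$ the displayed identity with $H^{(\varepsilon)},Q_\infty^{(\varepsilon)},\widehat{T}^{(\varepsilon)}_\infty(t)$ holds for every $t$; it follows from the von Neumann algebra version of Sylvester's law of inertia, which is in turn obtained from the variational formula of Lemma~\ref{egal} by producing, for the bounded invertible congruence bringing $\widehat{T}^{(\varepsilon)}_\infty(t)$ to block-diagonal form, projections adapted to $\widehat{T}^{(\varepsilon)}_\infty(t)$. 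Letting $\varepsilon\to0$ (so $Q_\infty^{(\varepsilon)}\to Q_\infty$ and $\widehat{T}^{(\varepsilon)}_\infty(t)\to\widehat{T}_\infty(t)$ in the measure topology, $\mu_{Q_\infty}(\{0\})=0$ since $Q_\infty$ is injective, and $\mu_{Q_\infty}([-\varepsilon,\varepsilon])\to0$) yields the stated operator identity.

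It remains to pass to the limit. The matrices $\widehat{T}_N(t)$ and $Q_N$ are evaluations of self-adjoint matrix-valued $\ast$-polynomials (affine pencils) in $(X^N,U^N,(U^N)^\ast)$, and the limiting operators $\widehat{T}_\infty(t)$, $Q_\infty$ are bounded; hence the assumed convergence in $\ast$-distribution forces convergence of all moments, so $\mu_{\widehat{T}_N(t)}\to\mu_{\widehat{T}_\infty(t)}$ and $\mu_{Q_N}\to\mu_{Q_\infty}$ weakly for each fixed $t$. Fix a continuity point $t$ of $\F_H$, i.e.\ $\mu(\{t\})=0$; then $t-H$ is injective, and a Schur complement computation identifies $\ker\widehat{T}_\infty(t)$ with $\{(\xi,-Q_\infty^{-1}w\xi):\xi\in\ker(t-H)\}$, so $\mu_{\widehat{T}_\infty(t)}(\{0\})=0$; likewise $\mu_{Q_\infty}(\{0\})=0$. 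Since mass at a point is upper semicontinuous under weak convergence, $\mu_{\widehat{T}_N(t)}(\{0\})\to0$ and $\mu_{Q_N}(\{0\})\to0$, and therefore $\mu_{\widehat{T}_N(t)}((-\infty,0))\to\mu_{\widehat{T}_\infty(t)}((-\infty,0))$ and $\mu_{Q_N}((-\infty,0))\to\mu_{Q_\infty}((-\infty,0))$. Substituting into the matrix identity and comparing with the operator identity gives $\F_{H_N}(t)\to\F_H(t)$ at every continuity point $t$ of $\F_H$, which is exactly convergence in law of the empirical measure of $H_N$ to $\mu$.

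The main obstacle is the operator identity of the second step: reconciling the possibly unbounded operator $H=w^\ast Q_\infty^{-1}w$ with its bounded linearization pencil in the presence of the discontinuity of inversion at $0$ is precisely what forces the use of the rank estimates (Lemmas~\ref{rang} and~\ref{decr}) and of the variational description of the cumulative distribution function (Lemma~\ref{egal}); the remaining ingredients are Sylvester's law of inertia for finite matrices and routine weak-convergence and moment arguments.
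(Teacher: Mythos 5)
Your argument is correct in substance but follows a genuinely different route from the paper. The paper regularizes the inverse itself: it replaces $Q_N^{-1}$ by $f_\varepsilon(Q_N)$ with $f_\varepsilon$ continuous and equal to $t\mapsto t^{-1}$ off $[-\varepsilon,\varepsilon]$, controls the replacement error by the rank estimates of Lemmas \ref{rang} and \ref{decr} (after normalizing $w=Tw_0$), and then proves convergence of the moments of $w^\ast f_\varepsilon(Q_N)w$ by polynomial approximation --- which, since $\Vert X_j^N\Vert$ is not assumed bounded in $N$, requires an extra truncation function and Cauchy--Schwarz estimates --- before invoking Lemma \ref{egal} and Portmanteau and letting $\varepsilon\to0$. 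You instead push everything through an inertia-counting identity for the bordered pencil $\widehat{T}(t)$, so that on the matrix side only moments of \emph{pencil} evaluations are needed; these converge directly from the $\ast$-distribution hypothesis (the limit pencils being bounded, hence with determinate spectral distributions), which neatly sidesteps the paper's most technical computation. The price is paid on the operator side: you need a tracial von Neumann algebra version of Sylvester's law of inertia (not in the paper, but provable from Lemma \ref{egal} together with the kernel correspondence under bounded invertible congruence, as you sketch), the spectral regularization $Q_\infty^{(\varepsilon)}$ with the same rank lemmas, and the kernel identification $\ker\widehat{T}_\infty(t)\hookrightarrow\ker(t-H)$, where only the forward inclusion is needed and domain issues for the unbounded $H$ must be checked; also your appeal to Lemma \ref{decr} for the non-projection $w$ should be replaced by the normalization $w=Tw_0$ (or the general bound $\rk(ABC)\leq\rk(B)$), exactly as the paper does. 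With those details written out, your proof is complete and arguably more economical on the random-matrix side, while the paper's proof avoids any inertia-type lemma at the cost of heavier moment estimates.
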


The fact that $(x,u) \in \dom_{\widetilde{\M}}(R)$ holds was established already in Theorem \ref{thm:evaluation_maximalDelta}. Accordingly, the main statement of Theorem \ref{conver} is the convergence of the empirical measure of $R(X^N,U^N)$ towards the spectral measure of $R(x,u)$.
This convergence result actually holds in a more general setting than the above theorem.
We summarize it as the following proposition.

\begin{prop}\label{prop:convergence}
For each $N\in\N$, let $X^N=(X_1^N,\dots,X_{d}^N)$ be a $d$-tuple of (possibly unbounded) operators affiliated with some tracial $W^\ast$-probability space $(\M^{(N)},\tau^{(N)})$.
Furthermore, for any $\ast$-polynomial $P$ we assume $P(X^N,X^{N\ast})\in L^1(\M^{(N)},\tau^{(N)})$ and $\tau^{(N)}(P(X^N,X^{N\ast}))$ converges towards $\tau(P(X,X^*))$ where $X=(X_1, \dots,$ $X_d)$ is a $d$-tuple of noncommutative (bounded) random variables in some tracial $W^\ast$-probability space $(\M,\tau)$.
Let $R$ be a square matrix-valued noncommutative rational expression in $d$ variables such that, for all $N\in\N$ which are sufficiently large,
\begin{enumerate}
\item\label{it:convergence_eval} $X^N \in\dom_{\widetilde{\M}^{(N)}}(R)$ and $X \in\dom_{\widetilde{\M}}(R)$,
\item\label{it:convergence_sa} $R(X^N)$ and $R(X)$ are self-adjoint.
\end{enumerate}
Then the analytic distribution of $R(X^N)$ converges in law towards the analytic distribution of $R(X)$.
\end{prop}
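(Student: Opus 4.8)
The plan is to use the self-adjoint linearization machinery from Section \ref{sec:preliminaries} to reduce the convergence of the (possibly unbounded) spectral measure of $R(X^N)$ to a statement about resolvents of affine linear pencils, which are bounded polynomial expressions to which ordinary $\ast$-distribution convergence applies. First I would invoke Theorem \ref{sa-lin} to pick a proper self-adjoint formal linear representation $\rho=(Q,w)$ of $R$, so that for every $\ast$-algebra $\A$ and every $Y\in\dom_\A(R)$ at which $R(Y)$ is self-adjoint one has $R(Y) = w^\ast Q(Y,Y^\ast)^{-1} w$. Since $\dom(R)\subseteq\dom(Q^{-1})$, the hypotheses \ref{it:convergence_eval} and \ref{it:convergence_sa} guarantee that for $N$ large $Q(X^N,(X^N)^\ast)$ is invertible in $M_k(\widetilde{\M}^{(N)})$, $Q(X,X^\ast)$ is invertible in $M_k(\widetilde{\M})$, and $R(X^N) = w^\ast Q(X^N,(X^N)^\ast)^{-1} w$, $R(X) = w^\ast Q(X,X^\ast)^{-1} w$.

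The key analytic step is to control the Cauchy (Stieltjes) transforms. For $z\in\C\setminus\R$ one has $(z - R(X^N))^{-1}$ expressible, via the Schur complement formula applied to the pencil $z\,\mathrm{diag}(1,0) - \begin{pmatrix} w^\ast w/z & \cdots\end{pmatrix}$ — more cleanly, I would use the standard identity that $z - w^\ast Q^{-1} w$ is invertible iff a suitable $(k+p)\times(k+p)$ affine-type matrix built from $z$, $Q$ and $w$ is invertible, and that the $(1,1)$-corner of the inverse of that matrix equals $(z - R(\cdot))^{-1}$. Concretely: the matrix $\begin{pmatrix} z 1_p & -w^\ast \\ -w & Q(Y,Y^\ast)\end{pmatrix}$ has invertible Schur complement $z 1_p - w^\ast Q(Y,Y^\ast)^{-1} w = z - R(Y)$ precisely when $z\notin\mathrm{spec}(R(Y))$, and then its inverse's $(1,1)$-block is $(z-R(Y))^{-1}$. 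Because the entries of this enlarged matrix depend affinely (in particular, as a fixed noncommutative $\ast$-polynomial, for $z$ fixed) on $X^N$ and $(X^N)^\ast$, convergence of $X^N$ to $X$ in $\ast$-distribution gives, by a standard moment/resolvent argument (the matrices involved are uniformly bounded in $N$ for each fixed $z$ with $|\Im z|$ bounded below, since the relevant operators have norm controlled once we are away from the spectrum — here I would be slightly careful and instead use that $(z-R(X^N))^{-1}$ is automatically a contraction times $|\Im z|^{-1}$, and feed this back into the enlarged matrix), that
\[
\mathrm{tr}_k\!\otimes\tau^{(N)}\big[\text{(1,1)-block of the enlarged resolvent at }X^N\big] \longrightarrow \mathrm{tr}_k\!\otimes\tau\big[\text{same at }X\big].
\]
Tracing out the $(1,1)$-block against the trace, this yields $\frac1p \mathrm{Tr}_p\otimes\tau^{(N)}\big((z-R(X^N))^{-1}\big) \to \frac1p\mathrm{Tr}_p\otimes\tau\big((z-R(X))^{-1}\big)$ for every $z\in\C\setminus\R$, i.e. pointwise convergence of the Cauchy transforms of the analytic distributions $\mu_{R(X^N)}$ to that of $\mu_{R(X)}$. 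Since Cauchy transforms of probability measures on $\R$ form a normal family and pointwise convergence of Cauchy transforms on $\C\setminus\R$ is equivalent to weak convergence of the measures (no tightness issue arises because the limit object $\mu_{R(X)}$ is itself a probability measure, so no mass escapes to infinity), we conclude $\mu_{R(X^N)} \to \mu_{R(X)}$ weakly, which is the assertion.

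The main obstacle I anticipate is the uniform-boundedness bookkeeping needed to pass $\ast$-distribution convergence through the resolvent of the linearization: the operators $X^N$ need not be bounded uniformly in $N$ (they need not be bounded at all), so one cannot naively say ``a $\ast$-polynomial in $X^N$ converges in trace.'' The clean way around this is to work entirely with the enlarged self-adjoint matrix $T_N(z) := \begin{pmatrix} z 1_p & -w^\ast \\ -w & Q(X^N,(X^N)^\ast)\end{pmatrix}$ viewed through the observation that what we actually need to converge is $\mathrm{tr}\otimes\tau^{(N)}$ applied to $T_N(z)^{-1}$, and to express $T_N(z)^{-1}$ as a norm-convergent Neumann-type series or, better, to use that the map sending a tuple to the trace of the resolvent of an affine linear pencil in it is continuous for $\ast$-distribution convergence — this is exactly the kind of statement underpinning operator-valued free probability and is where I would either cite an existing lemma (e.g. from \cite{HMS18} or the general theory of \cite{MSY19}) or give a short self-contained argument using that, for $\Im z$ large enough relative to nothing (since $z$ only multiplies the identity corner), $T_N(z)^{-1}$ is actually a bounded operator with norm controlled purely by $|\Im z|$ on the first corner and by a Schur-complement expansion elsewhere. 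Once this continuity statement is in hand the rest is routine.
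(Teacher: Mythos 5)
The algebraic part of your reduction is fine: with a proper self-adjoint formal linear representation $\rho=(Q,w)$ from Theorem \ref{sa-lin}, the $(1,1)$-block of the inverse of $T_N(z)=\begin{pmatrix} z 1_p & -w^\ast \\ -w & Q(X^N,X^{N\ast})\end{pmatrix}$ is indeed $(z-R(X^N))^{-1}$, and your endgame (pointwise convergence of Cauchy transforms to the Cauchy transform of a genuine probability measure implies weak convergence) is also correct. The gap is the middle step, and it is not a technicality: you assert that $\ast$-distribution convergence of $X^N$ forces convergence of $\tr_p\otimes\tau^{(N)}$ applied to that $(1,1)$-block, and you propose to justify it either by citing a ``continuity of the trace of the resolvent of an affine pencil'' lemma or by a norm bound on $T_N(z)^{-1}$ controlled by $|\Im z|$. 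Neither works here. Since $z$ multiplies only the $p\times p$ corner, $T_N(z)$ is not of the form $z$ minus a self-adjoint operator, and the off-corner blocks of $T_N(z)^{-1}$ involve $Q_N^{-1}=Q(X^N,X^{N\ast})^{-1}$, which is merely an operator affiliated with $\M^{(N)}$: it can be unbounded, and even when bounded its norm is governed by how close the spectrum of $Q_N$ comes to $0$, with no uniformity in $N$. A Neumann-series or uniform polynomial-approximation argument is also unavailable because the $X^N$ are not assumed uniformly bounded in operator norm (only $\ast$-moments converge), and the linearization machinery of \cite{HMS18} you point to is formulated for bounded operators. So the ``short self-contained argument'' you defer to is precisely the analytic heart of the proposition, and in the form you sketch it (norm of $T_N(z)^{-1}$ controlled purely by $|\Im z|$) it is false.

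For comparison, the paper's proof confronts exactly these two difficulties head-on. It replaces the unbounded function $g:t\mapsto t^{-1}$ by a bounded continuous $f_\varepsilon$ agreeing with $g$ off $[-\varepsilon,\varepsilon]$, and controls the resulting change of the cumulative distribution function of $w^\ast Q_N^{-1}w$ by $\rk\bigl(w^\ast(f_\varepsilon-g)(Q_N)w\bigr)$ via Lemma \ref{rang} and Lemma \ref{decr}; this rank is in turn bounded by the normalized trace of a soft spectral projection $h_\varepsilon(Q_N)$ of $Q_N$ near $0$, and the latter is killed in the limit $N\to\infty$, $\varepsilon\to 0$ because $Q_\infty=Q(X,X^\ast)$ is invertible in $\widetilde{\M}$ and hence has no atom at $0$. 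The convergence of the moments of $w^\ast f_\varepsilon(Q_N)w$ is then proved despite the missing uniform norm bound by splitting $f_\varepsilon-P_m$ with a cut-off at large spectral values (the $B^N$, $C^N$ decomposition) and using Cauchy--Schwarz together with a tail estimate of the form $\tr_k\otimes\tau^{(N)}((C^N)^2)\leq \tr_k\otimes\tau^{(N)}((1+Q_N^2)^{2g}Q_N^{2r})/C^{2r}$. If you wish to keep your resolvent formulation, you would have to import both of these ingredients anyway: a regularization of $Q_N^{-1}$ whose error is measured in trace (rank), not operator norm, which requires the no-atom-at-zero analysis of $Q_\infty$ and a Portmanteau-type argument for the spectral measures of $Q_N$; and a proof of trace convergence for bounded continuous functions of $Q_N$ compressed by $w$ that does not presuppose uniform boundedness of $X^N$. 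These cannot be outsourced to a generic continuity-of-resolvent statement, so as written the proposal does not yet constitute a proof.
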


Once Proposition \ref{prop:convergence} is shown, the statement about the convergence in Theorem \ref{conver} follows immediately. Indeed, the condition formulated in Item \ref{it:convergence_eval} of Proposition \ref{prop:convergence} is satisfied as we have $(X^N,U^N) \in \dom_{M_N(\C)}(R)$ for all $N\geq N_0$ by Item \ref{it:conver_evaluation} of Theorem \ref{conver} and $(x,u) \in \dom_{\widetilde{\M}}(R)$ by Theorem \ref{thm:evaluation_maximalDelta}; further, we have that $R(X^N,U^N)$ for all $N\geq N_0$ and $R(x,u)$ are self-adjoint thanks to Definition \ref{def:rational_expression_selfadjoint} as $R$ is supposed to be self-adjoint of type $(d_1,d_2)$, so that the condition in Item \ref{it:convergence_sa} of Proposition \ref{prop:convergence} is fulfilled as well.

Let us outline the proof of Proposition \ref{prop:convergence}.
Let $\rho=(Q,w)$ be a self-adjoint formal linear representation of $R$ in the sense of Definition \ref{def:formal_linear_representation_selfadjoint} which is moreover proper as given by Theorem \ref{sa-lin}.
Thanks to Lemma \ref{rang}, we can ignore the singularity in $0$ of $Q(X,X^\ast)^{-1}$.
More precisely, as long as the spectral measure of $Q(X,X^\ast)$ has no atom at $0$, we can use Lemma \ref{rang} to prove that the cumulative distribution function of $w^* Q(X,X^\ast)^{-1} w$ is close to the one of $w^* f_{\varepsilon}(Q(X,X^{\ast})) w$ where $f_{\varepsilon}$ is a continuous function which is equal to $t\mapsto t^{-1}$ outside of a neighborhood of $0$ of size $\varepsilon$.
Then we can use the convergence in $\ast$-distribution of $X^N$ to show that the cumulative distribution function of $w^* f_{\varepsilon}(Q(X^N,X^{N\ast})) w$ converges towards the correct limit when we let $N$ go to infinity and $\varepsilon$ go to $0$.

It is important to note that in this subsection, by convergence in $\ast$-distribution of $X^N$ of noncommutative random variables $X^N=(X_1^N,\dots,X_{d}^N)$, we mean that the trace of any noncommutative $\ast$-polynomial $P$ evaluated in $X^N$ converges towards the trace of $P(X,X^\ast)$ where $X$ is a $d$-tuple of noncommutative random variables in some tracial $W^\ast$-probability space.
In particular, this does \emph{not} exclude the case where the operator norm of $X_i^N$ is not bounded over $N$.
This forces us to do a few more computations since the convergence in law of the analytic measure of $P(X^N,X^{N\ast})$ towards the analytic measure of the limiting operator, while still valid, is not immediate anymore.

\begin{proof}[Proof of Proposition \ref{prop:convergence}]
    Let $\rho=(Q,w)$ be a proper self-adjoint formal linear representation (of dimension $k$) of $R$. If $p \in \N$ is the size of $R$, then since $k\geq p$ and $w$ have full rank, there exists a matrix $T \in \mathrm{GL}_k(\C)$ such that $w = T w_0$ where $w_0 \in M_{k\times p}(\C)$ is the rectangular matrix whose diagonal coefficients are all $1$, and non-diagonal coefficients are all $0$. By replacing $Q$ by $T^{\ast}Q T$, one can assume without loss of generality that 
    $w = w_0$.
    
    Notice that by assumption $Q_N := Q(X^N,X^{N\ast})$ is invertible in $\widetilde{\M}^{(N)}$ and $R(X^N) = w^* Q_N^{-1} w$.
    Further, we have also that $Q_\infty := Q(X,X^\ast)$ is invertible in $\widetilde{\M}$ and $R(X) = w^* Q_\infty^{-1} w$.
    Thanks to L\'evy's continuity theorem, to prove the convergence in law of the empirical measure of $R(X^N)$ towards the analytical distribution of $R(X)$, we only need to prove that the cumulative distribution function of the empirical measure of $R(X^N)$ converges towards the one of the analytical distribution of $R(X)$ at every point of continuity. I.e., we need to show that $\F_{w^\ast Q_N w}(t)$ converges towards $\F_{w^\ast Q_\infty w}(t)$ for $t\in\R$ such that the function $s\mapsto\F_{w^\ast Q_\infty w}(s)$ is continuous in $t$.
    To do so, let $g:t\mapsto t^{-1}$ and $f_{\varepsilon}:\R\to\R$ be a continuous function such that on the complementary set of $[-\varepsilon,\varepsilon]$, $f_{\varepsilon}=g$. We have for any $t\in\R$,
	\begin{align*}
		\left| \F_{w^* Q_N^{-1} w}(t) - \F_{w^* Q_\infty^{-1} w}(t) \right|
   \leq & \left| \F_{w^* f_{\varepsilon}(Q_N) w}(t) - \F_{w^* f_{\varepsilon}(Q_\infty) w}(t) \right|  \\
		&+ \left| \F_{w^* Q_N^{-1} w}(t) - \F_{w^* f_{\varepsilon}(Q_N) w}(t) \right| \\
		&+ \left| \F_{w^* Q_\infty^{-1} w}(t) - \F_{w^* f_{\varepsilon}(Q_\infty) w}(t) \right| .
	\end{align*}
	
	\noindent Thus thanks to Lemma \ref{rang},
	\begin{align*}
	\left| \F_{w^* Q^{-1}_N w}(t) - \F_{w^* Q^{-1}_\infty w}(t) \right|
    \leq & \left| \F_{w^* f_{\varepsilon}(Q_N) w}(t) - \F_{w^* f_{\varepsilon}(Q_\infty) w}(t) \right| \\
	&+ \rk(w^* (f_{\varepsilon}-g)(Q_N) w) \\
	&+ \rk(w^* (f_{\varepsilon}-g)(Q_\infty) w) .
	\end{align*}
	
	\noindent Since $w = w_0$, we have that for any $X \in M_p(\widetilde{\M})$,
	\begin{eqnarray*}
    \rk(w X w^*) &=& \rk\begin{pmatrix} X  & 0_{p \times (k-p)} \\ 0_{(k-p) \times p} & 0_{k-p}\end{pmatrix}\\
	&=& \frac{p}{k} \rk(X).
	\end{eqnarray*}
	
	\noindent This implies that
 	$$	\rk(w^* (f_{\varepsilon}-g)(Q_\infty)w) = \frac{k}{p}\times \rk(ww^* (f_{\varepsilon}-g)(Q_\infty) ww^*) \leq \frac{k}{p} \times  \rk((f_{\varepsilon}-g)(Q_\infty)) ,$$
	where in the last inequality, we used Lemma \ref{decr}. Besides $\mathbf 1_{[-\varepsilon,\varepsilon]}(Q_\infty)$ is a self-adjoint projection such that $\mathbf 1_{[-\varepsilon,\varepsilon]}(Q_\infty) (f_{\varepsilon}-g)(Q_\infty) = (f_{\varepsilon}-g)(Q_\infty)$. Consequently, with $\tr_k$ the non-renormalized trace on $M_k(\C)$ and $\tau$ the trace on $\M$,
	$$ \rk(w^* (f_{\varepsilon}-g)(Q_\infty) w) \leq \frac{1}{p}\tr_k\otimes\tau(\ \mathbf 1_{[-\varepsilon,\varepsilon]}(Q_\infty)\ ) .$$
	
	\noindent Let $h_{\varepsilon}$ be a continuous function which takes value $1$ on $[-\varepsilon,\varepsilon]$, $0$ outside of $[-2\varepsilon,2\varepsilon]$ and in $[0,1]$ elsewhere, then 
	\begin{equation}
	\label{ranksetimate}
		\rk(w^* (f_{\varepsilon}-g)(Q_\infty) w) \leq \frac{1}{p}\tr_k\otimes\tau(\ h_{\varepsilon}(Q_\infty)\ ) .
	\end{equation}
	
	\noindent Hence by applying the same reasoning to $Q_N$, we get after combining those estimates that
	\begin{align*}
	\left| \F_{w^* Q^{-1}_N w}(t) - \F_{w^* Q^{-1}_\infty w}(t) \right|
	 \leq & \left| \F_{w^* f_{\varepsilon}(Q_N) w}(t) - \F_{w^* f_{\varepsilon}(Q_\infty) w}(t) \right| \\
	& + \frac{1}{p}\tr_k\otimes \tau(\ h_{\varepsilon}(Q_\infty)\ ) \\
	& + \frac{1}{p} \tr_k\otimes\tau^{(N)}(\ h_{\varepsilon}(Q_N)\ ).
	\end{align*}
	
    To use the Portmanteau theorem, we want to prove that the analytic distribution of $w^* f_{\varepsilon}(Q_N) w$ converges towards the analytic distribution of $w^* f_{\varepsilon}(Q_\infty) w$.
    However, since this self-adjoint operator is uniformly bounded over $N$, we need to prove the convergence of the moments. That is, that
	$$ \lim_{N\to\infty} \frac{1}{p}\tr_p \otimes \tau^{(N)}\left( (w^* f_{\varepsilon}(Q_N) w)^l \right) = \frac{1}{p}\tr_p \otimes \tau\left( (w^* f_{\varepsilon}(Q_\infty) w)^l \right)$$
	for any $l$.
	The strategy consists in approximating $f_{\varepsilon}$ by a polynomial and then using the convergence in $\ast$-distribution of $X^N$.
	However, the fact that we did not assume the operator norm of $X_i^N$ to be bounded over $N$ forces us to make additional estimates.
	
	Let $C = \norm{Q_\infty}+1$, and $h$ be a non-negative continuous function which takes value $0$ on $[-C,C]$, $1$ outside of $[-C-1,C+1]$ and in $[0,1]$ elsewhere. Let $P_m$ be a polynomial such that $\norm{f_{\varepsilon}-P_m}_{\mathcal{C}^0([-C-1,C+1])} \leq 1/m$. We set
	$$
	 B^N := \big( f_{\varepsilon}(Q_N)-P_m(Q_N) \big) ( 1 - h(Q_N) ) \quad\text{and}\quad C^N := \big( f_{\varepsilon}(Q_N)-P_m(Q_N) \big) h(Q_N),
	$$
	then
	\begin{align*}
	&\frac{1}{p}\tr_p \otimes \tau^{(N)}\left( (w^* f_{\varepsilon}(Q_N) w)^l \right) - \frac{1}{p}\tr_p \otimes \tau^{(N)}\left( (w^* P_m(Q_N) w)^l \right) \\
	&= \sum_{i=1}^l \frac{1}{p}\tr_p \otimes \tau^{(N)}\left( (w^* f_{\varepsilon}(Q_N) w)^{i-1} w^*(B^N+C^N)w (w^* P_m(Q_N) w)^{l-i} \right).
	\end{align*}
	
	\noindent Thanks to the Cauchy-Schwarz inequality, we have for any $i$,
	\begin{align*}
	    &\left| \frac{1}{p}\tr_p \otimes \tau^{(N)}\left( (w^* f_{\varepsilon}(Q_N) w)^{i-1} w^*(B^N+C^N)w (w^* P_m(Q_N) w)^{l-i} \right) \right| \\
	    &\leq \left(\sqrt{\frac{1}{p}\tr_p\otimes\tau^{(N)}\left(w^*B^Nw w^* B^N w\right)} + \sqrt{\frac{1}{p}\tr_p\otimes\tau^{(N)}\left(w^*C^Nw w^* C^N w\right)}\right) \\
	    &\quad \times \sqrt{\frac{1}{p}\tr_p \otimes \tau^{(N)}\left( (w^* P_m(Q_N) w)^{2(l-i)} (w^* f_{\varepsilon}(Q_N) w)^{2(i-1)} \right)} \\
	    &\leq \left(\sqrt{\tr_k\otimes\tau^{(N)}\left((B^N)^2\right)} + \sqrt{\tr_k\otimes\tau^{(N)}\left((C^N)^2\right)}\right) \\
	    &\quad \times \left(\frac{1}{p}\tr_p \otimes \tau^{(N)}\left( (w^* P_m(Q_N) w)^{4(l-i)}\right)\right)^{1/4}\\
	    &\quad \times \left(\frac{1}{p}\tr_p \otimes \tau^{(N)}\left( (w^* f_{\varepsilon}(Q_N) w)^{4(i-1)} \right)\right)^{1/4}.
	\end{align*}
	
	\noindent Since $f_{\varepsilon}$ is bounded by a constant $K$, we have that 
	$$ \frac{1}{p}\tr_p \otimes \tau^{(N)}\left( (w^* f_{\varepsilon}(Q_N) w)^{4(i-1)} \right) \leq K^{4(i-1)} .$$
	Thanks to the convergence in $\ast$-distribution of $X^N$, and since the expression $w^* P_m(Q_N) w$ is a matrix of polynomials in $X^N$, we have
	$$ \lim_{N\to\infty} \frac{1}{p}\tr_p \otimes \tau^{(N)}\left( (w^* P_m(Q_N) w)^{4(l-i)}\right) = \frac{1}{p}\tr_p \otimes \tau\left( (w^* P_m(Q_\infty) w)^{4(l-i)}\right) ,$$
	which means that
	$$ \lim_{N\to\infty} \frac{1}{p}\tr_p \otimes \tau^{(N)}\left( (w^* P_m(Q_N) w)^{4(l-i)}\right) \leq (K+1/m)^{4(l-i)} .$$
	We also have
	$$ \tr_k\otimes\tau^{(N)}\left((B^N)^2\right) \leq \frac{k}{m}. $$
	Finally since $f_{\varepsilon}$ is bounded, there exists an integer $g$ such that for any $t\in\R$, $|f_{\varepsilon}(t)-P_m(t)| \leq (1+t^2)^g$, thus for any $r\geq 0$,
	\begin{align*}
	    \tr_k\otimes\tau^{(N)}\left((C^N)^2\right) &\leq \frac{\tr_k\otimes\tau^{(N)}\left((C^N)^2 Q_N^{2r}\right)}{N C^{2r}} \\
	    &\leq \frac{\tr_k\otimes\tau^{(N)}\left((1+Q_N^2)^{2g} Q_N^{2r}\right)}{N C^{2r}} .
	\end{align*}
	And so for any $r\geq 0$,
	\begin{align*}
	    \lim_{N\to\infty} \tr_k\otimes\tau^{(N)}\left((C^N)^2\right) &\leq \frac{\tr_k\otimes\tau\left((1+Q_\infty^2)^{2g} Q_\infty^{2r}\right)}{C^{2r}} \\
	    &\leq k \norm{(1+Q_\infty^2)^{2g}} \frac{(C-1)^{2r}}{C^{2r}}.
	\end{align*}
	\noindent So by letting $r$ go to infinity, we get 
	$$ \lim_{N\to\infty} \tr_k\otimes\tau^{(N)}\left((C^N)^2\right) = 0 .$$
	
	\noindent By combining those results, we obtain
	\begin{align*}
	&\limsup_{N\to\infty} \left| \frac{1}{p}\tr_p \otimes \tau^{(N)}\left( (w^* f_{\varepsilon}(Q_N) w)^l \right) - \frac{1}{p}\tr_p \otimes \tau\left( (w^* f_{\varepsilon}(Q_\infty) w)^l \right) \right| = \mathcal{O}(1/m).
	\end{align*}
	
	Thus, by letting $m$ go to infinity, we get the convergence of the moments. This implies that the analytic distribution of $w^* f_{\varepsilon}(Q_N) w$ converges towards the analytic distribution of $w^* f_{\varepsilon}(Q_\infty) w$.
	Thanks to Portmanteau's theorem and Lemma \ref{egal}, we have
	\begin{align*}
	    \F_{w^* f_{\varepsilon}(Q_\infty) w}(t) &\geq \limsup_{N\to\infty} \F_{w^* f_{\varepsilon}(Q_N) w}(t) \\
	    &\geq \liminf_{N\to\infty} \F_{w^* f_{\varepsilon}(Q_N) w}(t) \geq \lim_{s\to t, s< t} \F_{w^* f_{\varepsilon}(Q_\infty) w}(s) .
	\end{align*}
	
	\noindent Consequently,
	\begin{align*}
	&\limsup_{N\to\infty} \left| \F_{w^* Q_N^{-1} w}(t) - \F_{w^* Q_\infty^{-1} w}(t) \right| \\
	&\leq \lim_{s\to t, s< t} \left| \F_{w^* f_{\varepsilon}(Q_\infty) w}(t) - \F_{w^* f_{\varepsilon}(Q_\infty) w}(s) \right|
	+ \frac{2}{p} \tr_k\otimes\tau(\ h_{\varepsilon}(Q_\infty)\ ),
	\end{align*}
	where we used the convergence in $\ast$-distribution of $X^N$ once again in the last line, coupled with an argument similar to the one which let us prove the convergence of the moments of $w^* f_{\varepsilon}(Q_N) w$. But by using Lemma \ref{rang} one more time, we have
	\begin{align*}
	    &\left| \F_{w^* f_{\varepsilon}(Q_\infty) w}(t) - \F_{w^* f_{\varepsilon}(Q_\infty) w}(s) \right| \\
	    &\leq \left| \F_{w^* Q_\infty^{-1} w}(t) - \F_{w^* Q_\infty^{-1} w}(s) \right| + 2 \rk(w^* (f_{\varepsilon}-g)(Q_\infty) w) .
	\end{align*}
	
	\noindent Hence by using equation \eqref{ranksetimate}, we have that
	\begin{align*}
	    &\limsup_{N\to\infty} \left| \F_{w^* Q_N^{-1} w}(t) - \F_{w^* Q_\infty^{-1} w}(t) \right| \\
	    &\leq \lim_{s\to t, s< t} \left| \F_{w^\ast Q_\infty^{-1} w}(t) - \F_{w^\ast Q_\infty^{-1} w}(s) \right| + \frac{4}{p}\tr_k\otimes\tau(\ h_{\varepsilon}(Q_\infty)\ ).
	\end{align*}
	
	Since we made the assumption that $t$ was a point of continuity of the function $s\mapsto \F_{w^\ast Q_\infty w}(s)$, we have that $\lim_{s\to t, s\leq t} \left| \F_{w^\ast Q_\infty w}(t) - \F_{w^\ast Q_\infty w}(s) \right| = 0$.
	Besides, by the dominated convergence theorem, $\lim_{\varepsilon\to 0} \tr_k\otimes\tau( h_{\varepsilon}(Q_\infty)) = \tr_k\otimes\tau(\1_{\{0\}}(Q_\infty))$, which is equal to $0$ since otherwise the distribution of $Q_\infty$ would have an atom in $0$, in contradiction to the invertibility of $Q_\infty$; indeed, analogous to the proof of \cite[Corollary 5.13]{MSY19}, we notice that $Q_\infty \1_{\{0\}}(Q_\infty) = 0$ and conclude from the latter that since $Q_\infty$ is invertible over $\widetilde{\M}$ we necessarily have that $\1_{\{0\}}(Q_\infty) = 0$ and hence $\mu_{Q_\infty}(\{0\}) = \frac{1}{k} \tr_k\otimes\tau(\1_{\{0\}}(Q_\infty)) = 0$.
\end{proof}

\bibliographystyle{amsalpha}

\end{document}